\documentclass[a4paper]{article}

\usepackage[pages=all, color=black, position={current page.south}, placement=bottom, scale=1, opacity=1, vshift=5mm]{background}

\usepackage[margin=1in]{geometry} 

\usepackage{amsmath}
\usepackage{amsthm}
\usepackage{amssymb}

\usepackage[utf8]{inputenc}
\usepackage{hyperref}
\usepackage[normalem]{ulem}
\usepackage{graphicx}
\usepackage{verbatim,xcolor}
\usepackage{bm,caption}
\usepackage{multirow}
\usepackage{algorithmicx}
\usepackage[ruled]{algorithm}
\usepackage{algpseudocode,matlab-prettifier}
\usepackage[symbol]{footmisc}

\usepackage[sort&compress,numbers,square]{natbib}
\theoremstyle{plain}
\newtheorem{theorem}{Theorem}[section]

\newtheorem{lemma}[theorem]{Lemma}

\theoremstyle{definition}

\newtheorem{remark}[theorem]{Remark}

\graphicspath{{fig/}}
\usepackage{mathrsfs}

\usepackage{lipsum}
\numberwithin{equation}{section}

\usepackage{todonotes}

\newcommand{\ba}{{\bf a}}
\newcommand{\bb}{{\bf b}}

\newcommand{\be}{{\bf e}}
\newcommand{\bx}{{\bf x}}
\newcommand{\bn}{{\bf n}}
\newcommand{\bu}{{\bf u}}
\newcommand{\bv}{{\bf v}}

\newcommand{\bw}{{\bf w}}

\newcommand{\bV}{{\bf V}}

\newcommand{\bC}{{\bf C}}
\newcommand{\bD}{{\bf D}}

\newcommand{\bbf}{{\bf f}}

\newcommand{\Pih}{\Pi_h}
\newcommand{\Pz}{\mathcal{P}_0}

\newcommand{\cR}{\mathcal{R}}

\newcommand{\jump}[1]{[ #1 ]}
\newcommand{\avg}[1]{\{ #1\}}

\newcommand{\Honezd}{[H^1_0(\Omega)]^d}
 
 \newcommand{\Ltwoz}{L^2_0(\Omega)}
 \newcommand{\Vh}{\bV_h}
 
  \newcommand{\Ch}{\bC_h}
  \newcommand{\Dh}{\bD_h}
\newcommand{\Th}{\mathcal{T}_h}

\newcommand{\Eh}{\mathcal{E}_h}
\newcommand{\Eho}{\mathcal{E}_h^o}
\newcommand{\Ehb}{\mathcal{E}_h^b}

\newcommand{\norm}[1]{\lVert #1\rVert}
\newcommand{\enorm}[1]{\lVert #1\rVert_{\mathcal{E}}}
\newcommand{\snorm}[1]{|#1|}
\newcommand{\trinorm}[1]{{\left\vert\kern-0.25ex\left\vert\kern-0.25ex\left\vert #1 \right\vert\kern-0.25ex\right\vert\kern-0.25ex\right\vert}}

\newcommand{\buh}{\bu_h}
\newcommand{\ph}{p_h}

\title{Fully discrete analysis of pressure-robust enriched Galerkin methods for the time-dependent Stokes equations\footnote{Submitted to the editors in 2026.}}
\author{Seulip Lee,\thanks{Department of Mathematics, Tufts University, Medford, MA 02155 (\texttt{seulip.lee@tufts.edu})}
\and Lin Mu,\thanks{Department of Mathematics, University of Georgia, Athens, GA 30602 (\texttt{linmu@uga.edu})}}

\date{
}

\begin{document}
	\maketitle
	
	\begin{abstract}
This paper presents a fully discrete analysis of pressure-robust enriched Galerkin (EG) methods for the time-dependent Stokes equations. The spatial discretization employs a velocity reconstruction operator that preserves the continuous component of the EG space while mapping its enriched discontinuous component into the lowest-order Raviart-Thomas ($\mathcal{R}T_0$) subspace of $H(\mathrm{div})$, ensuring strict local mass conservation and pressure-robustness. Fully discrete schemes are constructed using the backward Euler and Crank-Nicolson methods, with the reconstruction incorporated into both the forcing and discrete time-derivative terms. We prove unconditional stability and derive optimal-order, parameter-explicit \textit{a priori} error estimates for both the velocity and pressure. In particular, the velocity estimates are independent of the continuous pressure and the irrotational component of the forcing term, contain no inverse-viscosity factors, and explicitly track the spatial mesh size, time-step size, and viscosity. Numerical experiments in two and three dimensions verify the predicted convergence rates, strict local mass conservation, and robustness with respect to the viscosity, while comparisons with non-pressure-robust variants demonstrate the importance of incorporating the reconstruction into the discrete time-derivative term.
\vskip 10pt
\noindent\textbf{Keywords:} time-dependent Stokes, enriched Galerkin methods, pressure-robustness, velocity reconstruction, fully discrete error analysis, mass conservation.
\end{abstract}

\section{Introduction}

The time-dependent Stokes equations govern the behavior of viscous, incompressible fluid flows. Beyond their direct application in low-Reynolds-number regimes, such as microfluidics, lubrication theory, and biological transport, these equations serve as a fundamental model for analyzing and numerically simulating the full Navier-Stokes equations. In particular, the numerical solution of the time-dependent Stokes system is a core component of operator-splitting techniques, fractional-step methods, and Newton-like linearization strategies for high-Reynolds-number flows \cite{glowinski2003, quarteroni2000}. 
The design of spatial and temporal discretizations for this system requires numerical schemes that control spurious pressure effects, enforce the incompressibility constraint either strictly or weakly, and maintain stability under practical time-stepping procedures.

A primary difficulty for classical mixed finite element methods is the sensitivity of the discrete velocity error to the continuous pressure and the irrotational component of the forcing term \cite{linke2014}. In standard formulations, such as Taylor-Hood, mini-element, and Crouzeix-Raviart methods, the discrete velocity error may depend directly on the magnitude or derivatives of the continuous pressure. Consequently, when the continuous pressure is large or exhibits sharp gradients, the accuracy of the velocity approximation can degrade significantly, even when the exact velocity field is smooth or trivial. This structural dependence reflects a lack of pressure-robustness and may produce unphysical velocities or poor mass conservation in practical computations.

To address this limitation, several pressure-robust methodologies have been proposed. One approach employs grad-div stabilization to penalize violations of the velocity divergence constraint \cite{olshanskii2009}. Another uses $H(\mathrm{div})$-conforming finite element methods, such as Raviart-Thomas or Brezzi-Douglas-Marini spaces, within discontinuous Galerkin or modified continuous formulations to obtain pointwise divergence-free discrete velocities \cite{cockburn2007, neilan2014}.
More recently, Linke et al. introduced a velocity reconstruction technique that modifies the discrete forcing term by mapping test functions into $H(\mathrm{div})$-conforming spaces through local reconstruction operators, so that the resulting velocity error bounds are independent of the pressure \cite{lederer2017,linke2016}. Such a reconstruction technique has subsequently been extended to nonconforming and weak Galerkin methods \cite{linke2020,mu2020pressure}, virtual element methods \cite{frerichs2022divergence,wang2021pressure}, and related finite-element formulations \cite{lederer2017,lederer2020pressure,mu2021development}.

Enriched Galerkin (EG) methods combine features of continuous Galerkin (CG) and discontinuous Galerkin (DG) approaches \cite{sun2009}. By enriching a continuous finite-element space with piecewise constant or low-order discontinuous functions, EG formulations provide local mass conservation while maintaining a computational cost and degree-of-freedom (DoF) count comparable to CG techniques. An EG scheme using the minimal number of DoFs was developed for the steady-state Stokes equations \cite{YiEtAl22Stokes}. While pressure-robust EG formulations have been established for steady-state problems using velocity reconstruction strategies \cite{hu2024pressure, lee2024low}, their extension to fully discrete, time-dependent Stokes systems poses additional structural and analytical challenges.
Although fully discrete analyses of the time-dependent Stokes equations are available for various formulations \cite{bernardi2004posteriori,feng2021analysis,leykekhman2025fully,li2023error,lv2024pressure,poudel2025pressure}, a parameter-explicit pressure-robust analysis that incorporates velocity reconstruction into the discrete time-derivative term has not yet been established.

Therefore, this paper introduces and analyzes a pressure-robust, fully discrete EG formulation for the time-dependent Stokes equations. The key features and main contributions of this work are summarized as follows:
\begin{itemize}
    \item We propose a pressure-robust EG spatial discretization that achieves strict local mass conservation. The scheme uses a velocity reconstruction operator that preserves the continuous component of the EG basis functions and maps the enriched discontinuous component into the lowest-order Raviart-Thomas \((\mathcal{R}T_0)\) subspace of \(H(\mathrm{div})\).
    
    \item We develop fully discrete backward Euler and Crank-Nicolson schemes in which the velocity reconstruction is incorporated into both the forcing and discrete time-derivative terms. This construction is essential for retaining pressure-robustness at the fully discrete level.
    
    \item We prove unconditional stability and derive optimal-order, parameter-explicit \textit{a priori} error estimates for the velocity and pressure. Our analysis explicitly tracks the dependence on the spatial mesh size, time-step size, and viscosity, yielding velocity error estimates that are independent of the continuous pressure and the irrotational component of the forcing term and contain no inverse-viscosity factors.
    
    \item We present numerical experiments in two and three dimensions that verify the predicted convergence rates and demonstrate strict local mass conservation and robustness to the continuous pressure and the viscosity parameter. Comparisons with non-pressure-robust variants demonstrate the importance of incorporating the reconstruction into the discrete time-derivative term.
\end{itemize}

The remainder of this paper is organized as follows. Section~\ref{sec:preliminaries} introduces the mathematical notations, Sobolev spaces, and the enriched finite-element spaces. Section~\ref{sec:scheme} describes the semidiscrete EG formulation and contains the stability proofs. Section~\ref{sec:scheme-2} defines the fully discrete EG schemes for the backward Euler and Crank-Nicolson algorithms. In Section~\ref{sec:full_analysis}, we prove the comprehensive \textit{a priori} error estimates. Section~\ref{sec:numerical_experiments} presents the numerical verification benchmarks, and Section~\ref{sec:conclusion} provides concluding remarks.

\section{Preliminaries}\label{sec:preliminaries}

In this section, we introduce the model problem, notation, and finite-element setting used throughout the paper.

\paragraph{Model problem.}
Let $\Omega \subset \mathbb{R}^d$ ($d=2,3$) be a bounded polygonal or polyhedral domain with boundary $\partial\Omega$, and let $I=(t_0,t_f]$ denote the time interval. We seek the velocity field $\mathbf{u}: \Omega \times I \rightarrow \mathbb{R}^d$ and the pressure field $p: \Omega \times I \rightarrow \mathbb{R}$ satisfying
\begin{subequations}\label{sys: time_Stokes}
\begin{alignat}{2}
\frac{\partial \mathbf{u}}{\partial t} - \mu \Delta \mathbf{u} + \nabla p &= \mathbf{f} \quad && \text{in } \Omega \times I, \label{eqn: time_Stokes1} \\
\nabla \cdot \mathbf{u} &= 0 \quad && \text{in } \Omega \times I, \label{eqn: time_Stokes2} \\
\mathbf{u} &= \mathbf{0} \quad && \text{on } \partial\Omega \times I, \label{eqn: time_Stokes3} \\
\mathbf{u}(\cdot, t_0) &= \mathbf{u}^0 \quad && \text{in } \Omega, \label{eqn: time_Stokes4}
\end{alignat}
\end{subequations}
where $\mu > 0$ is the kinematic viscosity coefficient, $\mathbf{f}$ is a prescribed body force, and the initial velocity $\mathbf{u}^0$ is a solenoidal field satisfying the corresponding compatibility conditions.

\paragraph{Function spaces.} 
Let $\mathcal{D}\subset\mathbb{R}^d$, $d=2,3$, be a bounded Lipschitz domain. For $s\ge 0$, we denote by $H^s(\mathcal{D})$ the standard Sobolev space, equipped with norm $\|\cdot\|_{s,\mathcal{D}}$ and seminorm $|\cdot|_{s,\mathcal{D}}$.
In particular, $H^0(\mathcal{D})=L^2(\mathcal{D})$, with inner product $(\cdot,\cdot)_\mathcal{D}$. When $\mathcal{D}=\Omega$, the subscript $\mathcal{D}$ is omitted.
All definitions extend componentwise to vector- and tensor-valued functions.
We define the subspaces
\[
H_0^1(\mathcal{D}):=\left\{v\in H^1(\mathcal{D}):v|_{\partial \mathcal{D}}=0\right\},\qquad
L_0^2(\mathcal{D})
:=
\left\{v\in L^2(\mathcal{D}) : (v,1)_\mathcal{D}=0\right\},
\]
and denote by $P_k(\mathcal{D})$ the space of polynomials of degree at most $k$ on $\mathcal{D}$. The Hilbert space
\begin{equation*}
    H(\text{div},\mathcal{D}):=\left\{\bv\in [L^2(\mathcal{D})]^d:\mathrm{div}\;\bv\in L^2(\mathcal{D})\right\}
\end{equation*}
is equipped with the norm \(\norm{\bv}_{H(\mathrm{div},\mathcal{D})}^2:=\norm{\bv}_{0,\mathcal{D}}^2+\norm{\mathrm{div}\;\bv}_{0,\mathcal{D}}^2\).

\paragraph{Triangulation and mesh notation.}
Let $\Th$ be a shape-regular triangulation of $\Omega$ into triangles (if $d=2$) or tetrahedra (if $d=3$). We denote by $\Eh$ the set of all edges/faces of $\Th$, decomposed as
\(
\Eh=\Eho\cup\Ehb,
\)
where $\Eho$ and $\Ehb$ are the collections of interior and boundary edges/faces, respectively.
For each element $T\in\Th$, $h_T$ denotes its diameter and $\bn_T$ its outward unit normal vector on $\partial T$. For an interior edge/face $e\in\Eho$ shared by elements $T^+$ and $T^-$, $\bn_e$ denotes the unit normal pointing from $T^+$ to $T^-$. For $e\in\Ehb$, $\bn_e$ is the outward unit normal on $\partial\Omega$.

\paragraph{Broken spaces and mesh-dependent norms.}
Associated with $\Th$, we define the broken Sobolev space
\begin{equation*}
    H^s(\Th):=\left\{v\in L^2(\Omega):v|_T\in H^s(T),\ \forall T\in\Th\right\},
\end{equation*}
with norm
\begin{equation*}    \norm{v}_{s,\Th}:=\left(\sum_{T\in\Th} \norm{v}^2_{s,T}\right)^{1/2}.
\end{equation*}
For $s=0$, the associated inner product is denoted as $(\cdot,\cdot)_{\Th}$.
The corresponding piecewise polynomial space is
\begin{equation*}
    P_k(\Th) = \{v\in L^2(\Omega): v|_T\in P_k(T),\ \forall T\in\Th\}.
\end{equation*}
The $L^2$-inner product on $\Eh$ is written as $\langle\cdot,\cdot\rangle_{\Eh}$, with associated norm
\begin{equation*}
\norm{v}_{0,\Eh}:=\left(\sum_{e\in\Eh} \norm{v}^2_{0,e}\right)^{1/2}.
\end{equation*}

\paragraph{Jumps and averages.}
For $e\in \Eho$ shared by $T^+$ and $T^-$, let $v^\pm$ denote the trace of $v|_{T^\pm}$ on $e\in \partial T^+\cap \partial T^-$.
The jump and average operators are defined by
\begin{equation*}
    \jump{v}:=\left\{\begin{array}{cl}
        v^+-v^- & \text{on}\ e\in \Eho, \\
        v & \text{on}\ e\in\Ehb,
    \end{array}\right.
    \qquad
    \avg{v}:=\left\{\begin{array}{cl}
        (v^++v^-)/2 & \text{on}\ e\in \Eho, \\
        v & \text{on}\ e\in\Ehb.
    \end{array}\right.
\end{equation*}
 These definitions extend componentwise to vector- and tensor-valued functions.

\paragraph{Time-dependent setting.}
For a Banach space \(X\), we use the standard Bochner spaces
\(L^p(I;X)\) and \(H^1(I;X)\).
Given a uniform partition
\[
t_m=t_0+m\tau,
\qquad
m=0,1,\ldots,N,
\]
where \(\tau=(t_f-t_0)/N\), we write
\[
v^m:=v(\cdot,t_m)
\]
for the value of a time-dependent function \(v\) at the time level \(t=t_m\).
For sequences \(\{v^m\}_{m=0}^N\), the quantity
\[
\|v\|_{\ell^2(t_0,t_n;X)}^2:=\tau\sum_{m=0}^{n-1}\|v^{m+1}\|_X^2
\]
serves as a discrete analog of
\[
\|v\|_{L^2(t_0,t_n;X)}^2.
\]


\section{Semidiscrete formulation}\label{sec:scheme}

In this section, we introduce the enriched Galerkin (EG) finite-element spaces, which pair a piecewise linear velocity approximation with a piecewise constant pressure approximation, achieving local mass conservation with minimal additional degrees of freedom. Based on these spaces, we formulate a semidiscrete EG scheme for the time-dependent Stokes problem \eqref{sys: time_Stokes} and establish an energy stability estimate.

\subsection{Enriched Galerkin finite-element spaces}

We adopt the EG finite-element pair introduced in \cite{YiEtAl22Stokes}, which combines continuous piecewise linear Lagrange velocity functions enriched by a discontinuous, piecewise linear, and mean-zero vector function per element, together with a piecewise constant pressure space. This pair is designed for incompressible flow problems.

The continuous component of the velocity space is
\begin{equation*}
\Ch = \left\{\bv^C \in \Honezd : \bv^C|_{T} \in [P_1(T)]^d,\ \forall T \in \Th \right\},
\end{equation*}
and the discontinuous enrichment space is
\begin{equation*}
    \Dh = \left\{\bv^D \in [L^2(\Omega)]^d : \bv^D|_{T} = c_T (\bx - \bx_T),\ c_T \in \mathbb{R},\ \forall T \in \Th\right\},
\end{equation*}
where $\bx_T$ denotes the barycenter of $T$.
The EG velocity space is then
\begin{equation*}
    \Vh := \Ch \oplus \Dh,
\end{equation*}
so that every $\bv\in\Vh$ admits the unique decomposition
$\bv=\bv^C+\bv^D$ with $\bv^C\in\Ch$ and $\bv^D\in\Dh$.
The pressure space is
\begin{equation*}
    Q_h := \left\{ q \in \Ltwoz : q|_T \in P_0(T),\ \forall T \in \Th \right\}.
\end{equation*}

\subsection{Semidiscrete EG formulation}

We first introduce the interpolation operator $\Pih: [H^2(\Omega)]^d \to \Vh$ from \cite{yi2022locking}, defined by
\begin{equation}
\Pi_h\bw=\Pi_h^C\bw+\Pi_h^D\bw,\label{eqn: interpolation_operator}
\end{equation}
where $\Pi_h^C\bw\in \bC_h$
is the nodal interpolant of $\bw$, and $\Pi_h^D\bw\in \bD_h$ is determined elementwise by
$$(\nabla\cdot\Pi_h^D\bw,1)_T=(\nabla\cdot(\bw - \Pi_h^C \bw), 1)_{T},\quad\forall T\in \Th.$$
The semidiscrete EG formulation of \eqref{sys: time_Stokes} reads: find
\[\bu_h\in H^1(I;\Vh)\quad \text{and}\quad p_h\in L^2(I;Q_h),\]
with initial condition \(\bu_h(t_0)=\bu_h^0:=\Pi_h\bu^0\), such that, for a.e. \(t\in I\),
\begin{subequations}\label{sys: semi_eg}
\begin{alignat}{2}
\left(\frac{\partial \bu_h}{\partial t},\bv\right)+\mu\ba(\bu_h,\bv)  - \bb(\bv, \ph) &= (\bbf,  \bv), &&\quad \forall \bv \in\Vh, \label{eqn: semi_eg1}\\
\bb(\buh,q) &= 0, &&\quad \forall q\in Q_h.  \label{eqn: semi_eg2}
\end{alignat}
\end{subequations}
The bilinear forms $\ba(\cdot,\cdot)$ and $\bb(\cdot,\cdot)$ are defined by 
\begin{subequations}\label{sys: bilinear}
\begin{align}
\ba(\bv,\bw) &:= (\nabla \bv,\nabla \bw)_{\Th} -  \langle \avg{\nabla\bv} \bn_e, \jump{\bw} \rangle_{\Eh} \nonumber\\
&\qquad\qquad\qquad- \langle \avg {\nabla\bw} \bn_e,\jump{\bv}\rangle_{\Eh}  +  \rho \langle h_e^{-1}\jump{\bv},
\jump{\bw}\rangle_{\Eh}, \label{eqn: bia} \\
\bb(\bw,q)&:= (\nabla\cdot\bw, q)_{\Th} - \langle \jump{\bw}\cdot\bn_e,\avg{q} \rangle_{\Eh},  \label{eqn: bib}
\end{align}
\end{subequations}
where $\rho >0$ is a penalty parameter and $h_e = |e|^{1/(d-1)}$, with $|e|$ denoting the length (if $d=2$) or area (if $d=3$) of $e \in \Eh$.

Since the enrichment functions in $\Dh$ are discontinuous across element interfaces, an interior penalty discontinuous Galerkin (IPDG) approach is used to enforce continuity weakly in the above bilinear forms. A penalty-parameter-free alternative based on a weak Galerkin formulation was recently proposed in \cite{lee2024low}.

\subsection{Stability analysis}

Following \cite{YiEtAl22Stokes}, we equip $\Vh$ with the 
discrete $H^1$-norm
\begin{equation*}
    \enorm{\bv}^2 := \norm{\nabla \bv}_{0, \Th}^2 + \rho \norm{h_e^{-1/2}  \jump{\bv}}_{0, \Eh}^2.
\end{equation*}
For $\rho$ sufficiently large, the bilinear form $\ba(\cdot,\cdot)$ satisfies the following coercivity and continuity estimates \cite{YiEtAl22Stokes}: there exist positive constants $\kappa_1$ and $\kappa_2$, independent of $h$ and $\mu$, such that
\begin{alignat}{2}
\ba(\bv, \bv) & \ge \kappa_1 \enorm{\bv}^2, && \quad \forall \bv \in \Vh, \label{eqn: ba_coer} \\
|\ba(\bv, \bw)| & \leq \kappa_2 \enorm{\bv}\enorm{\bw}, && \quad \forall \bv, \bw \in \Vh. \label{eqn: ba_conti}
\end{alignat}

We now establish the energy stability of the semidiscrete scheme \eqref{sys: semi_eg}.

\begin{lemma}\label{lemma: semi_stability}
Let
$\bu_h\in H^1(I;\Vh)$ and 
$p_h\in L^2(I;Q_h)$
be the solution to \eqref{sys: semi_eg}. Then, for all \(t\in I\),
\begin{equation}\label{eqn: semi_stability}
\|\bu_h(t)\|_0^2
+
2\mu\kappa_1
\int_{t_0}^{t}
\enorm{\bu_h(s)}^2\,\mathrm{d}s
\le
e^{t-t_0}
\left(
\|\bu_h(t_0)\|_0^2
+
\int_{t_0}^{t}
\|\bbf(s)\|_0^2\,\mathrm{d}s
\right).
\end{equation}
\end{lemma}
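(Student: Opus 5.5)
We follow the standard energy argument: test \eqref{eqn: semi_eg1} with the discrete solution itself, eliminate the pressure using \eqref{eqn: semi_eg2}, and close with Grönwall's inequality. Fix a.e.\ $s\in I$ and choose $\bv=\bu_h(s)\in\Vh$ in \eqref{eqn: semi_eg1}, and $q=p_h(s)\in Q_h$ in \eqref{eqn: semi_eg2}. The second choice gives $\bb(\bu_h(s),p_h(s))=0$, so the pressure term drops out. We are left with
\[
\left(\frac{\partial \bu_h}{\partial t},\bu_h\right)+\mu\,\ba(\bu_h,\bu_h)=(\bbf,\bu_h).
\]
Since $\bu_h\in H^1(I;\Vh)$ and $\Vh$ is finite dimensional, $t\mapsto\|\bu_h(t)\|_0^2$ is absolutely continuous. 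Its derivative satisfies $\frac{1}{2}\frac{d}{dt}\|\bu_h\|_0^2=(\partial_t\bu_h,\bu_h)$ a.e.

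Next we bound the remaining terms. The coercivity estimate \eqref{eqn: ba_coer} (valid for $\rho$ large) gives $\mu\,\ba(\bu_h,\bu_h)\ge\mu\kappa_1\enorm{\bu_h}^2$. For the right-hand side, Cauchy--Schwarz and Young give $(\bbf,\bu_h)\le\frac12\|\bbf\|_0^2+\frac12\|\bu_h\|_0^2$. Multiplying through by $2$ yields, for a.e.\ $s$,
\[
\frac{d}{ds}\|\bu_h(s)\|_0^2+2\mu\kappa_1\enorm{\bu_h(s)}^2\le\|\bbf(s)\|_0^2+\|\bu_h(s)\|_0^2 .
\]
We deliberately keep $\|\bu_h\|_0^2$ on the right rather than absorbing it via a discrete Poincaré inequality. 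This avoids any inverse-viscosity factor and explains the $e^{t-t_0}$ in \eqref{eqn: semi_stability}.

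The final step is Grönwall's inequality. Set
\[
y(t):=\|\bu_h(t)\|_0^2+2\mu\kappa_1\int_{t_0}^t\enorm{\bu_h}^2\,ds,\qquad F(t):=\|\bu_h(t_0)\|_0^2+\int_{t_0}^t\|\bbf\|_0^2\,ds.
\]
Integrating the differential inequality from $t_0$ to $t$ gives
\[
y(t)\le F(t)+\int_{t_0}^t\|\bu_h(s)\|_0^2\,ds\le F(t)+\int_{t_0}^t y(s)\,ds .
\]
Here we used $\|\bu_h(s)\|_0^2\le y(s)$, since the dissipation term is nonnegative. The function $F$ is nondecreasing, so the integral form of Grönwall's lemma gives $y(t)\le e^{t-t_0}F(t)$, which is exactly \eqref{eqn: semi_stability}.

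There is no serious obstacle in this argument. The only points needing care are routine: justifying the time differentiation of $\|\bu_h\|_0^2$ under the stated Bochner regularity, and using the form of Grönwall that keeps the dissipation term on the left (valid because $F$ is monotone). One could alternatively multiply by $e^{-(s-t_0)}$ before integrating. That gives the same bound with the weight $e^{-(s-t_0)}$ on the dissipation integral, which is then bounded below by $e^{-(t-t_0)}$.
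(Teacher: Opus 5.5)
Your proposal is correct and follows the paper's proof essentially step for step: testing with $(\bu_h,p_h)$ to cancel the pressure, then coercivity, Cauchy--Schwarz and Young, integration in time, and Gr\"onwall. Your explicit setup with $y(t)\ge\|\bu_h(t)\|_0^2$ and nondecreasing $F$ spells out the Gr\"onwall step that the paper leaves implicit, and it correctly keeps the dissipation term on the left-hand side.
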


\begin{proof}
Taking $\bv=\buh(t)$ in \eqref{eqn: semi_eg1} and
$q=\ph(t)$ in \eqref{eqn: semi_eg2}, and adding the two equations, we obtain
\[
\left(\frac{\partial\buh}{\partial t},\buh\right)
+
\mu\ba(\buh,\buh)
=
(\bbf,\buh).
\]
Since
\[
\left(\frac{\partial\buh}{\partial t},\buh\right)
=
\frac{1}{2}\frac{\mathrm{d}}{\mathrm{d}t}\|\buh\|_0^2,
\]
the coercivity estimate \eqref{eqn: ba_coer} gives 
\[
\frac{1}{2}\frac{\mathrm{d}}{\mathrm{d}t}\|\buh\|_0^2
+
\mu\kappa_1\enorm{\buh}^2
\le
(\bbf,\buh).
\]
Applying the Cauchy-Schwarz and Young's inequalities to the right-hand side yields
\[
(\bbf,\buh)
\le
\frac12\|\bbf\|_0^2
+
\frac12\|\buh\|_0^2,
\]
and hence
\[
\frac{\mathrm{d}}{\mathrm{d}t}\|\buh\|_0^2
+
2\mu\kappa_1\enorm{\buh}^2
\le
\|\bbf\|_0^2
+
\|\buh\|_0^2.
\]
Integrating over $(t_0,t)$ gives
\[
\|\buh(t)\|_0^2
+
2\mu\kappa_1
\int_{t_0}^{t}
\enorm{\buh(s)}^2\,\mathrm{d}s
\le
\|\buh(t_0)\|_0^2
+
\int_{t_0}^{t}
\|\bbf(s)\|_0^2\,\mathrm{d}s
+
\int_{t_0}^{t}
\|\buh(s)\|_0^2\,\mathrm{d}s.
\]
The estimate \eqref{eqn: semi_stability} then follows by Gr\"onwall's inequality.
\end{proof}

\section{Pressure-robust fully discrete formulations}\label{sec:scheme-2}

In this section, we present pressure-robust fully discrete EG formulations for the time-dependent Stokes problem~\eqref{sys: time_Stokes}. We first recall the velocity reconstruction operator introduced in \cite{hu2024pressure}, and then present backward Euler and Crank-Nicolson discretizations that incorporate it. These formulations serve as the basis for the fully discrete error analysis in Section~\ref{sec:full_analysis}, which establishes their pressure-robustness.

The velocity reconstruction operator \cite{hu2024pressure} is defined as $\cR: \Vh \to\mathcal{B}DM_1(\Th)\subset H(\text{div},\Omega)$ by
\begin{subequations}\label{sys: BDM}
\begin{alignat}{2}
\int_e (\cR \bv) \cdot\bn_e  p_1\  ds & = \int_e \avg{\bv}\cdot\bn_e p_1 \ ds,
 && \quad \forall p_1 \in P_1(e), \ \forall e \in \Eho,  \\
\int_e (\cR \bv) \cdot\bn_e  p_1\  ds & = 0,  && \quad \forall p_1 \in P_1(e), \ \forall e \in \Ehb,
\end{alignat}
\end{subequations}
where $\mathcal{B}DM_1(\Th)$ denotes the Brezzi-Douglas-Marini space of index one on $\Th$.
The reconstruction preserves the continuous component, $\cR\bv^C=\bv^C\in\mathcal{B}DM_1(\Th)$, while the discontinuous component is mapped into the lowest-order Raviart-Thomas space,
\[
\cR\bv^D\in \mathcal{R}T_0(\Th)\subset\mathcal{B}DM_1(\Th).
\]
Consequently, $\cR\bv = \bv^C + \cR\bv^D\in \mathcal{B}DM_1(\Th)$ for every $\bv\in\Vh$.

In \cite{hu2024pressure}, the reconstruction operator was applied only to the forcing term to achieve pressure-robustness for the stationary Stokes problem. For time-dependent problems, however, the divergence-free structure must be maintained throughout the temporal evolution. Motivated by this, we incorporate the reconstruction operator not only in the forcing term but also in the time-stepping. More precisely, the standard difference quotient
\[
\frac{\bu_h^{m+1}-\bu_h^m}{\tau}
\quad
\text{is replaced by}
\quad
\frac{\mathcal R\bu_h^{m+1}-\mathcal R\bu_h^m}{\tau},
\]
where $\bu_h^m$ approximates $\bu^m=\bu(\cdot,t_m)$.
As shown in the subsequent analysis, this modification is essential to obtain velocity error estimates that are free of pressure dependence and of inverse powers of $\mu$.

We first propose a fully discrete backward Euler formulation, which serves as the primary scheme analyzed in this paper and provides the basis for higher-order temporal discretizations.

\begin{algorithm}[H]
\caption{Fully pressure-robust backward Euler (\texttt{FPR-BE}) scheme} \label{alg: BE-FPR}
Given $\bu_h^m\in \Vh$, find $( \mathbf{u}_h^{m+1}, p_h^{m+1}) \in \mathbf{V}_h \times Q_h $ such that
\begin{subequations}\label{sys: BE-UR-EG}
\begin{alignat}{2}
\frac{1}{\tau}(\mathcal{R}\mathbf{u}_h^{m+1},\mathcal{R}\mathbf{v}) + \mu\mathbf{a}(\mathbf{u}_h^{m+1},\mathbf{v})  - \mathbf{b}(\mathbf{v}, p_h^{m+1}) &= (\mathbf{f}^{m+1},  \mathcal{R}\mathbf{v}) + \frac{1}{\tau}(\mathcal{R}\mathbf{u}_h^{m},\mathcal{R}\mathbf{v}), &&\quad \forall \mathbf{v} \in\mathbf{V}_h, \label{eqn: BE-UR-EG-mom}\\
\mathbf{b}(\mathbf{u}_h^{m+1},q) &= 0, &&\quad \forall q\in Q_h.  \label{eqn: BE-UR-EG-cont}
\end{alignat}
\end{subequations}
\end{algorithm}

The reconstruction strategy extends naturally to higher-order temporal discretizations. As an example, we consider the following Crank-Nicolson formulation, which achieves second-order time accuracy within the same reconstruction framework.

\begin{algorithm}[H]
\caption{Fully pressure-robust Crank-Nicolson (\texttt{FPR-CN}) scheme}\label{alg: CN-FPR}
Given $\bu_h^m\in\Vh$, find $(\bu_h^{m+1},p_h^{m+1/2})\in\Vh\times Q_h$ such that
\begin{subequations}\label{eqn: CN-UR-EG}
\begin{alignat}{2}
\frac{1}{\tau}(\mathcal{R}\bu_h^{m+1},\mathcal{R}\bv)
+ \frac{\mu}{2}\mathbf{a}(\bu_h^{m+1},\bv)
&- \mathbf{b}(\bv,p_h^{m+1/2})\nonumber\\
&= (\bar{\mathbf{f}}^{m+1/2},\mathcal{R}\bv)
+ \frac{1}{\tau}(\mathcal{R}\bu_h^{m},\mathcal{R}\bv)  - \frac{\mu}{2}\mathbf{a}(\bu_h^{m},\bv),
&&\quad\forall\bv\in\Vh, \label{eqn: CN-UR-EG-mom}\\
\mathbf{b}(\bu_h^{m+1},q) &= 0, &&\quad\forall q\in Q_h, \label{eqn: CN-UR-EG-cont}
\end{alignat}
\end{subequations}
where $\bar{\mathbf{f}}^{m+1/2}:=(\mathbf{f}^{m+1}+\mathbf{f}^m)/2$.
\end{algorithm}

In addition, both fully discrete formulations satisfy the following
mass-conservation property.
\begin{remark}[Mass conservation]\label{remark: mass_conservation}
At each time step, the reconstructed velocity is pointwise
divergence-free on every element, namely,
\[
\nabla\cdot\cR\bu_h^{m+1}=0
\quad \text{a.e. in each } T\in\Th.
\]
The proof is analogous to the mass-conservation argument in \cite{poudel2025pressure} and is therefore omitted here.
Since $\cR\bu_h^{m+1}\in H(\mathrm{div},\Omega)$, the reconstructed velocity is exactly locally mass-conservative at every time level. This property is verified numerically in Section~\ref{sec:numerical_experiments}.
\end{remark}

\section{Fully discrete error analysis}\label{sec:full_analysis}

In this section, we establish \textit{a priori} error estimates for the proposed fully discrete formulations. We begin by collecting several approximation and stability results used throughout the analysis, including properties of the EG interpolation operator, the local \(L^2\)-projection for the pressure, and the velocity reconstruction operator.

The interpolation operator $\Pi_h$ introduced in \eqref{eqn: interpolation_operator} satisfies the following approximation and stability estimates \cite{yi2022locking}:
\begin{subequations}\label{sys: Pih}
\begin{alignat}{2}
& |\bw - \Pih \bw | _{j,\Th} \leq C h^{m-j} |\bw|_{m},&&\quad 0 \leq j \leq m \leq 2, \quad\forall\bw\in[H^2(\Omega)]^d, \label{eqn: Pih_err} \\
& \enorm{\bw - \Pih \bw} \leq C h \norm{\bw}_2, &&\quad\forall \bw \in [H^2(\Omega)]^d, \label{eqn: Pih_energy_err}
\\
& \enorm{\Pih \bw} \leq C \snorm{\bw}_1,
&&\quad\forall \bw \in [H_0^1(\Omega)\cap H^2(\Omega)]^d. \label{eqn: Pih_stability}
\end{alignat}
\end{subequations}
For the pressure, the local $L^2$-projection $\mathcal{P}_0: H^1(\Omega) \to Q_h$, defined elementwise by
$$(q - \Pz q, 1)_T = 0,\quad\forall T\in\Th,$$
satisfies the approximation estimate
\begin{equation}
  \norm{ q - 
  \Pz q}_0 \leq C h \norm{q}_1,  \quad \forall q \in H^1(\Omega).  \label{eqn: P_err}  
\end{equation}
The velocity reconstruction operator \(\mathcal R\) defined in \eqref{sys: BDM} satisfies the approximation estimate \cite{hu2024pressure}:
\begin{equation}\label{eqn: cR-err}
\norm{ \bv - \cR\bv}_{0}\leq Ch\norm{h_e^{-1/2}\jump{\bv}}_{0,\Eh} \leq C h \enorm{\bv},\quad \forall\bv \in \Vh,
\end{equation}
and the stability estimate
    \begin{equation}
        \norm{\mathcal{R}\bv}_0\leq Ch\enorm{\bv},\quad\forall\bv\in\Vh,\label{eqn: norm_bound_RtoE}
    \end{equation}
where $C>0$ is independent of $\mu$ and $h$ in both cases.
The bound \eqref{eqn: norm_bound_RtoE} follows by combining \eqref{eqn: cR-err} with an estimate for $\norm{\bv}_0$ obtained from a scaling argument based on the piecewise linearity of $\bv$ on $\Th$; see \cite[Lemma~1]{lee2024uniform} for details.

Finally, we recall the discrete inf-sup condition \cite[Lemma~4.5]{YiEtAl22Stokes}: for
$\rho$ sufficiently large, there exists a constant
$\gamma>0$, independent of $\mu$ and $h$, such that
\begin{equation}
\inf_{q\in Q_h}\sup_{\bv\in\Vh}\frac{\bb(\bv,q)}{\enorm{\bv} \norm{q}_0}\geq \gamma.\label{eqn: inf-sup}
\end{equation}

\subsection{Error estimates for the FPR-BE method}

Throughout this subsection, we assume that the exact solution satisfies
\begin{equation}
    \bu\in L^\infty(I;[H_0^1(\Omega)\cap H^2(\Omega)]^d),
\qquad
p\in L^\infty(I;L_0^2(\Omega)\cap H^1(\Omega)),\label{eqn: up_regularity}
\end{equation}
and
\begin{equation}
    \partial_t\bu\in L^\infty(I;[H^1(\Omega)]^d),
\qquad
\partial_{tt}\bu\in L^2(I;[L^2(\Omega)]^d).\label{eqn: time_regularity}
\end{equation}
For each time level \(t_m\), we have \(\bu^m=\bu(\cdot,t_m)\in [H_0^1(\Omega)\cap H^2(\Omega)]^d\) and
\(p^m=p(\cdot,t_m)\in L_0^2(\Omega)\cap H^1(\Omega)\).
We introduce the standard error decompositions
\begin{equation*}
    \bm{\chi}_h^m:=\mathbf{u}^m-\Pi_h\mathbf{u}^m,\qquad\mathbf{e}_h^m:=\Pi_h\mathbf{u}^m-\mathbf{u}_h^m\in \Vh,
\end{equation*}
and
\begin{equation*}
    \xi_h^m:=p^m-\mathcal{P}_0 p^m,\qquad\epsilon_h^m:=\mathcal{P}_0 p^m-p_h^m\in Q_h,
\end{equation*}
so that $\bu^m-\bu^m_h = \bm{\chi}_h^m + \mathbf{e}_h^m$ and $p^m- p^m_h = \xi_h^m + \epsilon_h^m$.

We next derive the error equations for the \texttt{FPR-BE} scheme \eqref{sys: BE-UR-EG}.
\begin{lemma}\label{lemma: be_fpr_erreqn}
For any $\bv\in\Vh$ and $q\in Q_h$, the error components satisfy
\begin{subequations}\label{sys: ur_euler_erreqn}
\begin{alignat}{2}
\frac{1}{\tau}(\mathcal{R}(\mathbf{e}_h^{m+1}-\mathbf{e}_h^m),\mathcal{R}\mathbf{v})+\mu\mathbf{a}(\mathbf{e}_h^{m+1}&,\mathbf{v}) - \mathbf{b}(\mathbf{v},\epsilon_h^{m+1})\nonumber\\
&=l_1(\mathbf{u}^{m+1},\mathbf{v})+l_2(\mathbf{u}^{m+1},\mathbf{v}) - l^{m}_{\mathtt{BE}}(\mathbf{u},\mathbf{v}),\label{eqn: ur_euler_erreqn1}\\
\mathbf{b}(\mathbf{e}_h^{m+1},q) &= -\mathbf{b}(\bm{\chi}_h^{m+1},q),\label{eqn: ur_euler_erreqn2}
\end{alignat}
\end{subequations}
where 
\begin{align*}
    &l_1(\mathbf{u}^{m+1},\bv):=\mu\mathbf{a}(\Pi_h\mathbf{u}^{m+1}-\mathbf{u}^{m+1},\mathbf{v}),\\
    &l_2(\mathbf{u}^{m+1},\bv):=\mu(\Delta\mathbf{u}^{m+1},\mathcal{R}\mathbf{v}-\mathbf{v}),\\
    &l^{m}_{\mathtt{BE}}(\bu,\bv):=(\partial_t\mathbf{u}^{m+1},\mathcal{R}\mathbf{v})
-\frac{1}{\tau}(\mathcal{R}\Pi_h(\mathbf{u}^{m+1}-\mathbf{u}^m),\mathcal{R}\mathbf{v}),
\end{align*}
with $\partial_t \bu^{m+1}:=(\partial \bu/\partial t)(\cdot,t_{m+1})$ denoting the time derivative of $\bu$ evaluated at $t_{m+1}$.
\end{lemma}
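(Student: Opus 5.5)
The strategy is to test the continuous momentum equation \eqref{eqn: time_Stokes1} at $t_{m+1}$ against the reconstructed test function $\cR\bv$ and then subtract the scheme \eqref{sys: BE-UR-EG}. Fix $\bv\in\Vh$. Since $\cR\bv\in H(\mathrm{div},\Omega)$ has zero normal trace on $\partial\Omega$, integration by parts gives $(\nabla p^{m+1},\cR\bv)=-(p^{m+1},\nabla\cdot\cR\bv)$. This equals $-(\mathcal P_0p^{m+1},\nabla\cdot\cR\bv)$, because $\nabla\cdot\cR\bv$ is piecewise constant ($\mathcal{B}DM_1$ divergence lies in $P_0(\Th)$). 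We therefore obtain
\[
(\partial_t\bu^{m+1},\cR\bv)-\mu(\Delta\bu^{m+1},\cR\bv)-(\mathcal P_0p^{m+1},\nabla\cdot\cR\bv)=(\bbf^{m+1},\cR\bv).
\]
Two facts about the reconstruction are needed. The first is $(q,\nabla\cdot\cR\bv)=\bb(\bv,q)$ for every $q\in Q_h$. To see this, on each $T$ write $(\nabla\cdot\cR\bv,q)_T=\langle\cR\bv\cdot\bn_T,q\rangle_{\partial T}$ and use the edge moments in \eqref{sys: BDM} with $p_1=1$. Summing over $T$ gives $\langle\avg{\bv}\cdot\bn_e,\jump{q}\rangle_{\Eho}$. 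The elementwise integration by parts of $(\nabla\cdot\bv,q)_\Th$ in $\bb$ also produces a sum over $\partial T$. The identity $\jump{\bv q}=\avg{\bv}\jump{q}+\jump{\bv}\avg{q}$ then shows that this reproduces exactly $\bb(\bv,q)$, with the boundary edges handled by the zero-flux condition. This is the step I regard as the main bookkeeping obstacle, since the signs and the boundary-edge conventions in the jump and average definitions must line up. The second fact is that $\mu a(\bu^{m+1},\bv)=-\mu(\Delta\bu^{m+1},\bv)$. This is consistency of the SIPG form: $\bu^{m+1}\in H^2\cap H^1_0$ has zero jumps, so elementwise integration by parts gives $(\nabla\bu,\nabla\bv)_\Th-\langle\avg{\nabla\bu}\bn_e,\jump{\bv}\rangle_{\Eh}=-(\Delta\bu,\bv)$.

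With these facts the continuous equation becomes
\[
(\partial_t\bu^{m+1},\cR\bv)+\mu a(\bu^{m+1},\bv)-\mu(\Delta\bu^{m+1},\cR\bv-\bv)-\bb(\bv,\mathcal P_0p^{m+1})=(\bbf^{m+1},\cR\bv).
\]
I subtract \eqref{eqn: BE-UR-EG-mom} from this. The forcing terms cancel, and the pressure terms combine to $-\bb(\bv,\epsilon_h^{m+1})$. The difference in the time terms is $(\partial_t\bu^{m+1},\cR\bv)-\tau^{-1}(\cR(\bu_h^{m+1}-\bu_h^m),\cR\bv)$. For this I substitute $\bu_h=\Pi_h\bu-\be_h$, which by the definition of $l^m_{\mathtt{BE}}$ gives $l^m_{\mathtt{BE}}(\bu,\bv)+\tau^{-1}(\cR(\be_h^{m+1}-\be_h^m),\cR\bv)$. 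For the viscous term I write $\mu a(\bu^{m+1}-\bu_h^{m+1},\bv)=\mu a(\be_h^{m+1},\bv)-l_1(\bu^{m+1},\bv)$. The remaining residual is $-l_2(\bu^{m+1},\bv)$. Moving $l_1,l_2,l^m_{\mathtt{BE}}$ to the right-hand side yields \eqref{eqn: ur_euler_erreqn1}.

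Finally, for \eqref{eqn: ur_euler_erreqn2} I note that $\bb(\bu^{m+1},q)=(\nabla\cdot\bu^{m+1},q)=0$, since $\bu^{m+1}$ is divergence-free with no jumps. Combined with \eqref{eqn: BE-UR-EG-cont} this gives $\bb(\bu^{m+1}-\bu_h^{m+1},q)=0$, that is, $\bb(\be_h^{m+1},q)=-\bb(\bm\chi_h^{m+1},q)$.
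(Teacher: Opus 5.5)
Your proposal is correct and follows essentially the same route as the paper. Both arguments test \eqref{eqn: time_Stokes1} at $t_{m+1}$ against $\cR\bv$, use the consistency identity $-(\Delta\bw,\bv)=\ba(\bw,\bv)$ and the pressure identity $(\nabla p^{m+1},\cR\bv)=-\bb(\bv,\mathcal P_0p^{m+1})$, insert $\tau^{-1}(\cR\Pi_h(\bu^{m+1}-\bu^m),\cR\bv)$, and subtract the scheme. The only difference is that you verify the pressure identity explicitly through $(q,\nabla\cdot\cR\bv)=\bb(\bv,q)$ for piecewise constant $q$, where the paper simply asserts it. Your verification checks out. On the $\bb$ side the boundary-edge terms cancel through the conventions $\jump{\bv}=\bv$ and $\avg{q}=q$, and on the reconstruction side the boundary flux of $\cR\bv$ vanishes.
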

\begin{proof}

We derive the two identities separately, beginning with \eqref{eqn: ur_euler_erreqn1}.
Recall from \cite{YiEtAl22Stokes} the identity $-(\Delta\bw,\bv)=\mathbf{a}(\bw,\bv)$, valid for any $\bw\in[H_0^1(\Omega)\cap H^2(\Omega)]^d$ and $\bv\in\Vh$. Writing
$\mathcal{R}\bv=\bv+(\mathcal{R}\bv-\bv)$ and applying this identity
to $\bu^{m+1}$ give
\[
-\mu(\Delta\mathbf{u}^{m+1},\mathcal{R}\mathbf{v})
= \mu\mathbf{a}(\Pi_h\mathbf{u}^{m+1},\mathbf{v})
-\mu\mathbf{a}(\Pi_h\mathbf{u}^{m+1}-\mathbf{u}^{m+1},\mathbf{v})
-\mu(\Delta\mathbf{u}^{m+1},\mathcal{R}\mathbf{v}-\mathbf{v}).
\]
For the pressure term, since $\mathcal{R}\bv\cdot\bn_T$
is continuous across element interfaces and $\nabla\cdot\mathcal{R}\bv$ is
piecewise constant on $\mathcal{T}_h$, elementwise integration by parts together with the continuity of $p^{m+1}$ gives
\[
(\nabla p^{m+1},\mathcal{R}\mathbf{v})
=-\mathbf{b}(\mathbf{v},\mathcal{P}_0 p^{m+1}).
\]
For the time derivative, adding and subtracting the discrete increment $\mathcal{R}\Pi_h(\bu^{m+1}-\bu^m)/\tau$ gives
\begin{align*}
(\partial_t\mathbf{u}^{m+1},\mathcal{R}\mathbf{v})
&= \frac{1}{\tau}(\mathcal{R}\Pi_h(\mathbf{u}^{m+1}-\mathbf{u}^m),\mathcal{R}\mathbf{v}) \\
&\quad\qquad +\left[(\partial_t\mathbf{u}^{m+1},\mathcal{R}\mathbf{v})
-\frac{1}{\tau}(\mathcal{R}\Pi_h(\mathbf{u}^{m+1}-\mathbf{u}^m),\mathcal{R}\mathbf{v})\right]\\
&= \frac{1}{\tau}(\mathcal{R}\Pi_h(\mathbf{u}^{m+1}-\mathbf{u}^m),\mathcal{R}\mathbf{v})
+ l^{m}_{\mathtt{BE}}(\bu,\bv),
\end{align*}
where $l^{m}_{\mathtt{BE}}$ captures the consistency error of the backward Euler discretization.
Testing \eqref{eqn: time_Stokes1} at $t=t_{m+1}$ against $\mathcal{R}\bv$ and substituting the three identities above yield
\begin{align*}
\frac{1}{\tau}(\mathcal{R}\Pi_h(\bu^{m+1}-\bu^m),\mathcal{R}\bv)
&+\mu\mathbf{a}(\Pi_h\bu^{m+1},\bv)-\mathbf{b}(\bv,\mathcal{P}_0 p^{m+1})\\
&=(\mathbf{f}^{m+1},\mathcal{R}\bv)
+ l_1(\bu^{m+1},\bv)+l_2(\bu^{m+1},\bv)-l^{m}_{\mathtt{BE}}(\bu,\bv).
\end{align*}
Subtracting the discrete momentum equation \eqref{eqn: BE-UR-EG-mom} of the \texttt{FPR-BE} scheme
and using the linearity of $\mathcal{R}$
yield \eqref{eqn: ur_euler_erreqn1}.

For \eqref{eqn: ur_euler_erreqn2}, since $\bu^{m+1}$ is divergence-free and $\bu^{m+1}_h$ satisfies the discrete continuity equation \eqref{eqn: BE-UR-EG-cont}, we have $\mathbf{b}(\mathbf{u}^{m+1},q)=0=\mathbf{b}(\mathbf{u}_h^{m+1},q)$ for all $q\in Q_h$. Therefore,
\[\mathbf{b}(\mathbf{e}_h^{m+1},q)
=\mathbf{b}(\Pi_h\bu^{m+1}-\bu^{m+1},q)
=-\mathbf{b}(\bm{\chi}_h^{m+1},q),\]
which proves \eqref{eqn: ur_euler_erreqn2}.
\end{proof}

\begin{lemma}\label{lemma: be_fpr_consistency}
For any $\bv\in \Vh$, the consistency terms satisfy
\begin{subequations}\label{sys: ur_suppest}
\begin{alignat}{2}
& |l_1(\bu^{m+1},\bv)|\leq C{\mu}h\norm{\bu^{m+1}}_2\enorm{\bv},\label{eqn: ur_suppest1} \\
& |l_2(\bu^{m+1},\bv)|\leq C{\mu}h\norm{\bu^{m+1}}_2\enorm{\bv},\label{eqn: ur_suppest2} \\
& |l^{m}_{\mathtt{BE}}(\bu,\bv)|\leq C\big(h\norm{\partial_t\bu}_{L^\infty(t_m,t_{m+1};[H^1(\Omega)]^d)}+\tau^{1/2}\norm{\partial_{tt}\bu}_{L^2(t_m,t_{m+1};[L^2(\Omega)]^d)}\big)\norm{\mathcal{R}\bv}_0,\label{eqn: ur_suppest3}
\end{alignat}
\end{subequations}
where $C$ is a generic positive constant independent of $\mu$ and $h$ and may vary in each case.
\end{lemma}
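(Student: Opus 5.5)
The three bounds are proved separately; each one combines an estimate stated earlier with Cauchy--Schwarz.

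For \eqref{eqn: ur_suppest1}, the plan is to apply the continuity of $\ba$. Since $\Pih\bu^{m+1}-\bu^{m+1}$ does not lie in $\Vh$, \eqref{eqn: ba_conti} cannot be used directly. Instead I would bound each term of \eqref{eqn: bia} by hand:
\begin{itemize}
\item the volume term by $\norm{\nabla\bm{\chi}_h^{m+1}}_{0,\Th}\norm{\nabla\bv}_{0,\Th}$;
\item the penalty term and the term $\langle\avg{\nabla\bv}\bn_e,\jump{\bm\chi_h^{m+1}}\rangle$ by $\enorm{\bm\chi_h^{m+1}}\enorm{\bv}$, using the discrete trace/inverse inequality $\norm{h_e^{1/2}\avg{\nabla\bv}}_{0,\Eh}\le C\norm{\nabla\bv}_{0,\Th}$, valid for piecewise linear $\bv$;
\item the remaining term $\langle\avg{\nabla\bm\chi_h^{m+1}}\bn_e,\jump{\bv}\rangle$ by the continuous trace inequality $h_e\norm{\nabla\bm\chi}_{0,e}^2\le C(\norm{\nabla\bm\chi}_{0,T}^2+h_T^2|\bm\chi|_{2,T}^2)$, paired with $\rho^{1/2}\norm{h_e^{-1/2}\jump{\bv}}$.
\end{itemize}
With \eqref{eqn: Pih_err} for $j=1,2$ and \eqref{eqn: Pih_energy_err}, this gives $C\mu h\norm{\bu^{m+1}}_2\enorm{\bv}$.

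For \eqref{eqn: ur_suppest2}, Cauchy--Schwarz together with \eqref{eqn: cR-err} gives
\[
\mu\norm{\Delta\bu^{m+1}}_0\norm{\cR\bv-\bv}_0\le C\mu h\norm{\bu^{m+1}}_2\enorm{\bv}.
\]

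For \eqref{eqn: ur_suppest3}, I split $l^m_{\mathtt{BE}}$ into two pieces:
\[
\Big(\partial_t\bu^{m+1}-\tfrac{\bu^{m+1}-\bu^m}{\tau},\cR\bv\Big)+\tfrac1\tau\big((I-\cR\Pih)(\bu^{m+1}-\bu^m),\cR\bv\big).
\]
For the first piece I use Taylor's formula with integral remainder, $\partial_t\bu^{m+1}-\tau^{-1}(\bu^{m+1}-\bu^m)=\tau^{-1}\int_{t_m}^{t_{m+1}}(s-t_m)\partial_{tt}\bu(s)\,ds$. Cauchy--Schwarz in time then bounds its $L^2$ norm by $C\tau^{1/2}\norm{\partial_{tt}\bu}_{L^2(t_m,t_{m+1};L^2)}$.

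The second piece is the main obstacle. Let $\bw:=\tau^{-1}(\bu^{m+1}-\bu^m)=\tau^{-1}\int_{t_m}^{t_{m+1}}\partial_t\bu\,ds$. This $\bw$ lies in $H^2\cap H^1_0$ with $\norm{\bw}_1\le\norm{\partial_t\bu}_{L^\infty(t_m,t_{m+1};H^1)}$. Note that only $H^1$ regularity of $\bw$ is available, while $\Pih$ and \eqref{sys: Pih} are stated for $H^2$ functions. I would write $(I-\cR\Pih)\bw=(\bw-\Pih\bw)+(\Pih\bw-\cR\Pih\bw)$ and bound the two parts as follows.
\begin{itemize}
\item The second part is handled by \eqref{eqn: cR-err} and \eqref{eqn: Pih_stability}: $\norm{\Pih\bw-\cR\Pih\bw}_0\le Ch\enorm{\Pih\bw}\le Ch|\bw|_1$.
\item For the first part I need $\norm{\bw-\Pih\bw}_0\le Ch|\bw|_1$. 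This is the $j=0$, $m=1$ case of \eqref{eqn: Pih_err}, which is stated as valid for $0\le j\le m\le2$, so I invoke it as given.
\end{itemize}
If only a nodal-interpolant-type bound were available, I would fall back on the stability \eqref{eqn: Pih_stability} together with a Poincaré-type local argument. Combining the two parts gives $Ch\norm{\partial_t\bu}_{L^\infty(t_m,t_{m+1};H^1)}\norm{\cR\bv}_0$, and adding the Taylor piece yields \eqref{eqn: ur_suppest3}.
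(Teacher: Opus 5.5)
Your proposal is correct and follows essentially the same route as the paper. It bounds $l_2$ by Cauchy--Schwarz and \eqref{eqn: cR-err}, and splits $l^{m}_{\mathtt{BE}}$ into a Taylor remainder plus $(I-\cR\Pih)\bm\omega$, handled by the same triangle inequality and the same appeals to \eqref{eqn: Pih_err} (with $j=0$, $m=1$), \eqref{eqn: cR-err}, and \eqref{eqn: Pih_stability}. The only difference is that you bound $l_1$ term by term with trace and inverse inequalities, which properly spells out what the paper compresses into ``continuity of $\ba$'' plus a citation of \cite{lee2024uniform}, since \eqref{eqn: ba_conti} as stated does not apply to $\Pih\bu^{m+1}-\bu^{m+1}\notin\Vh$.
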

\begin{proof}
The estimates \eqref{eqn: ur_suppest1} and \eqref{eqn: ur_suppest2}
follow from the interpolation estimate \eqref{eqn: Pih_energy_err},
the continuity of \(\ba(\cdot,\cdot)\), and the approximation property
\eqref{eqn: cR-err}; see \cite{lee2024uniform} for the same argument.

It remains to bound \(l^{m}_{\mathtt{BE}}\). By definition,
\[
l^{m}_{\mathtt{BE}}(\bu,\bv)
=
\left(
\partial_t\bu^{m+1}
-
\frac{\mathcal R\Pi_h(\bu^{m+1}-\bu^m)}{\tau},
\mathcal R\bv
\right).
\]
Adding and subtracting
\(
(\bu^{m+1}-\bu^m)/\tau
\), we split
\begin{align*}
l^{m}_{\mathtt{BE}}(\bu,\bv)
&=
\left(
\partial_t\bu^{m+1}
-
\frac{\bu^{m+1}-\bu^m}{\tau},
\mathcal R\bv
\right)\\
&\qquad\qquad
+
\left(
\frac{\bu^{m+1}-\bu^m}{\tau}
-\frac{\mathcal R\Pi_h(\bu^{m+1}-\bu^m)}{\tau},
\mathcal R\bv
\right)
\\
&=: \mathbf{I}_1+\mathbf{I}_2.
\end{align*}
For \(\mathbf{I}_1\), Taylor's theorem with integral remainder gives
\[
\left\|
\partial_t\bu^{m+1}
-
\frac{\bu^{m+1}-\bu^m}{\tau}
\right\|_0
\le
C\tau^{1/2}\norm{\partial_{tt}\bu}_{L^2(t_m,t_{m+1};[L^2(\Omega)]^d)},
\]
and hence
\[
|\mathbf{I}_1|
\le
C\tau^{1/2}\norm{\partial_{tt}\bu}_{L^2(t_m,t_{m+1};[L^2(\Omega)]^d)}
\|\mathcal R\bv\|_0.
\]
For \(\mathbf{I}_2\), setting $\bm\omega:=(\bu^{m+1} -\bu^m)/\tau$ and applying the triangle inequality give
\[
\|(\mathcal{I}-\mathcal R\Pi_h)\bm{\omega}\|_0
\le
\|\bm{\omega}-\Pi_h \bm{\omega}\|_0
+
\|\Pi_h \bm{\omega}-\mathcal R\Pi_h \bm{\omega}\|_0.
\]
The interpolation estimate \eqref{eqn: Pih_err}, the approximation property \eqref{eqn: cR-err}, and the stability estimate \eqref{eqn: Pih_stability} together yield
\[
\|(\mathcal{I}-\mathcal R\Pi_h)\bm{\omega}\|_0
\le
Ch\|\bm{\omega}\|_1.
\]
Since
\[
\bm\omega
=
\frac{1}{\tau}\int_{t_m}^{t_{m+1}}\partial_t\bu(s)\,\mathrm{d}s,
\]
the assumed regularity \eqref{eqn: time_regularity} gives
\[
\|\bm\omega\|_1
\le
\|\partial_t\bu\|_{L^\infty(t_m,t_{m+1};[H^1(\Omega)]^d)},
\]
and hence
\[
|\mathbf{I}_2|
\le
Ch\|\partial_t\bu\|_{L^\infty(t_m,t_{m+1};[H^1(\Omega)]^d)}
\|\mathcal R\bv\|_0.
\]
Combining the estimates for \(\mathbf{I}_1\) and \(\mathbf{I}_2\) proves
\eqref{eqn: ur_suppest3}.
\end{proof}

\begin{lemma} For any $\bv\in\Vh$ and $q\in Q_h$,
    \begin{subequations}\label{sys: supp_conti}
\begin{alignat}{2}
& |\bb(\bv,\xi_h^{m+1})|\leq Ch \norm{p^{m+1}}_1\enorm{\bv},\label{eqn: supp_conti1} \\
& |\bb(\bm{\chi}_h^{m+1},q)|\leq Ch\norm{q}_0\norm{\bu^{m+1}}_2,\label{eqn: supp_conti2}
\end{alignat}
\end{subequations}
where \(C>0\) independent of \(\mu\) and \(h\).
\end{lemma}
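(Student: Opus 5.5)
For \eqref{eqn: supp_conti1}, I will write out $\bb(\bv,\xi_h^{m+1})=(\nabla\cdot\bv,\xi_h^{m+1})_{\Th}-\langle\jump{\bv}\cdot\bn_e,\avg{\xi_h^{m+1}}\rangle_{\Eh}$ and bound the two terms separately.

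The volume term is handled by Cauchy--Schwarz, $\norm{\nabla\cdot\bv}_{0,\Th}\le C\norm{\nabla\bv}_{0,\Th}$, and the projection estimate \eqref{eqn: P_err}. This gives the bound $Ch\norm{p^{m+1}}_1\enorm{\bv}$. (In fact $\nabla\cdot\bv$ is piecewise constant, so this term vanishes, since $\xi_h^{m+1}$ has zero mean on each element; either way the bound holds.)

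For the face term, I weight Cauchy--Schwarz as
\[
|\langle\jump{\bv}\cdot\bn_e,\avg{\xi_h^{m+1}}\rangle_{\Eh}|\le \norm{h_e^{-1/2}\jump{\bv}}_{0,\Eh}\,\norm{h_e^{1/2}\avg{\xi_h^{m+1}}}_{0,\Eh}.
\]
The first factor is bounded by $\rho^{-1/2}\enorm{\bv}$. For the second factor, I apply the standard trace inequality on each element $T$,
\[
\norm{w}_{0,\partial T}^2\le C\big(h_T^{-1}\norm{w}_{0,T}^2+h_T\norm{\nabla w}_{0,T}^2\big),
\]
with $w=p^{m+1}-\Pz p^{m+1}$. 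Since $\nabla w=\nabla p^{m+1}$ on $T$ and $h_e\simeq h_T$ by shape-regularity,
\[
h_e\norm{\xi_h^{m+1}}_{0,\partial T}^2\le C\big(\norm{\xi_h^{m+1}}_{0,T}^2+h_T^2\norm{\nabla p^{m+1}}_{0,T}^2\big)\le Ch_T^2\norm{p^{m+1}}_{1,T}^2.
\]
The last step uses the local version of \eqref{eqn: P_err}. Summing over elements, and noting that each face sees at most two elements, gives $\norm{h_e^{1/2}\avg{\xi_h^{m+1}}}_{0,\Eh}\le Ch\norm{p^{m+1}}_1$.

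For \eqref{eqn: supp_conti2}, I expand $\bb(\bm\chi_h^{m+1},q)=(\nabla\cdot\bm\chi_h^{m+1},q)_{\Th}-\langle\jump{\bm\chi_h^{m+1}}\cdot\bn_e,\avg{q}\rangle_{\Eh}$. The volume term vanishes by the construction of $\Pi_h^D$: since $q$ is piecewise constant,
\[
(\nabla\cdot\bm\chi_h^{m+1},q)_T=q|_T\,(\nabla\cdot(\bu^{m+1}-\Pi_h^C\bu^{m+1})-\nabla\cdot\Pi_h^D\bu^{m+1},1)_T=0.
\]
Even without this cancellation, the term is bounded by $\norm{\nabla\bm\chi_h^{m+1}}_{0,\Th}\norm{q}_0\le Ch\norm{\bu^{m+1}}_2\norm{q}_0$ via \eqref{eqn: Pih_err}.

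For the face term, I use Cauchy--Schwarz together with the discrete trace/inverse bound $\norm{h_e^{1/2}\avg{q}}_{0,\Eh}\le C\norm{q}_0$, valid because $q$ is piecewise constant. This gives
\[
|\langle\jump{\bm\chi_h^{m+1}}\cdot\bn_e,\avg{q}\rangle_{\Eh}|\le C\norm{h_e^{-1/2}\jump{\bm\chi_h^{m+1}}}_{0,\Eh}\norm{q}_0\le C\rho^{-1/2}\enorm{\bm\chi_h^{m+1}}\norm{q}_0.
\]
Then \eqref{eqn: Pih_energy_err} gives $\enorm{\bm\chi_h^{m+1}}\le Ch\norm{\bu^{m+1}}_2$, which completes the bound.

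The main obstacle is the face term in \eqref{eqn: supp_conti1}: it needs the trace inequality applied to $p^{m+1}-\Pz p^{m+1}$, with the correct $h$-scaling, rather than a bound through $\norm{\xi_h^{m+1}}_0$ alone. All constants depend only on shape-regularity and $\rho$, and so are independent of $\mu$ and $h$.
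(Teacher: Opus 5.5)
Your proof is correct and matches the paper's approach, which defers to the argument of Lemma~7 in the cited reference. In both, the volume terms vanish by the local mean-zero property of $\Pz$ and the defining property of $\Pi_h^D$, and the face terms are handled by $h_e^{\pm1/2}$-weighted Cauchy--Schwarz, a trace inequality, and the approximation estimates \eqref{eqn: P_err} and \eqref{eqn: Pih_energy_err}.
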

\begin{proof}
The estimates follow directly from the arguments in the proof of
Lemma 7 in \cite{lee2024uniform}, applied at the time level
\(t_{m+1}\).
\end{proof}

\begin{theorem}\label{thm: euler_error_estimate}
Under the regularity assumptions
\eqref{eqn: up_regularity}--\eqref{eqn: time_regularity},
for any integer \(1\le n\le N\), the following estimate holds:
\begin{align*}
    \|\mathcal{R}\mathbf{e}_h^n\|_0^2 + C\mu\norm{\be_h}^2_{\ell^2(t_0,t_n;\mathcal{E})} 
    &\leq C\|\mathcal{R}\mathbf{e}_h^0\|_0^2 + C\!\left(\frac{h^4}{\tau^2}+\mu h^2+h^4\right)\!\norm{\bu}^2_{\ell^2(t_0,t_n;[H^2(\Omega)]^d)}\\
    &\qquad\quad+Ch^2\norm{\partial_t\bu}_{L^\infty(t_0,t_n;[H^1(\Omega)]^d)}^2+C\tau^2\norm{\partial_{tt}\bu}_{L^2(t_0,t_{n};[L^2(\Omega)]^d)}^2.
\end{align*}
\end{theorem}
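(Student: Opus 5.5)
The plan is an energy argument on the error equations of Lemma~\ref{lemma: be_fpr_erreqn}, tested with the discrete error. The difficulty is that $\be_h^{m+1}$ is not discretely divergence-free: by \eqref{eqn: ur_euler_erreqn2}, $\bb(\be_h^{m+1},q)=-\bb(\bm\chi_h^{m+1},q)$. If we simply test with $\bv=\be_h^{m+1}$, the term $\bb(\be_h^{m+1},\epsilon_h^{m+1})$ survives, and bounding it requires $\epsilon_h$. That in turn requires the inf-sup condition and would reintroduce pressure and $\mu^{-1}$ dependence.

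To avoid this, I would first build a divergence corrector. The inf-sup condition \eqref{eqn: inf-sup} gives, for each $m$, some $\bw^m\in\Vh$ with
\[
\bb(\bw^m,q)=\bb(\bm\chi_h^m,q)\quad\forall q\in Q_h,
\qquad
\enorm{\bw^m}\le \gamma^{-1}\sup_{q}\frac{\bb(\bm\chi_h^m,q)}{\norm{q}_0}\le Ch\norm{\bu^m}_2 .
\]
The last bound uses \eqref{eqn: supp_conti2}. The stability bound \eqref{eqn: norm_bound_RtoE} then gives $\norm{\cR\bw^m}_0\le Ch\enorm{\bw^m}\le Ch^2\norm{\bu^m}_2$, and this extra power of $h$ is what will produce the $h^4/\tau^2$ term in the statement. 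Setting $\bm z^m:=\be_h^m+\bw^m$, we get $\bb(\bm z^{m+1},q)=0$ for all $q\in Q_h$. From \eqref{eqn: ur_euler_erreqn1}, $\bm z$ satisfies
\[
\tfrac1\tau(\cR(\bm z^{m+1}-\bm z^m),\cR\bv)+\mu\ba(\bm z^{m+1},\bv)-\bb(\bv,\epsilon_h^{m+1})
= l_1+l_2-l^m_{\mathtt{BE}}+\tfrac1\tau(\cR(\bw^{m+1}-\bw^m),\cR\bv)+\mu\ba(\bw^{m+1},\bv).
\]

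Next I test with $\bv=\bm z^{m+1}$, so the pressure term vanishes. On the left I use $2(a-b,a)=\norm a^2-\norm b^2+\norm{a-b}^2$ and the coercivity \eqref{eqn: ba_coer}. The right-hand side is bounded term by term:
\begin{itemize}
\item By Lemma~\ref{lemma: be_fpr_consistency} and Young's inequality, $l_1$ and $l_2$ are bounded by $\frac{\mu\kappa_1}{4}\enorm{\bm z^{m+1}}^2+C\mu h^2\norm{\bu^{m+1}}_2^2$. No $\mu^{-1}$ appears because both sides carry a factor $\mu$.
\item Similarly, $\mu\ba(\bw^{m+1},\bm z^{m+1})$ is handled by \eqref{eqn: ba_conti} and gives another $\mu h^2\norm{\bu^{m+1}}_2^2$.
\item The term $l^m_{\mathtt{BE}}$ is bounded via \eqref{eqn: ur_suppest3} and Young by $\norm{\cR\bm z^{m+1}}_0^2$ plus $C\big(h^2\norm{\partial_t\bu}^2_{L^\infty(H^1)}+\tau\norm{\partial_{tt}\bu}^2_{L^2(t_m,t_{m+1};L^2)}\big)$.
\item The corrector time-difference term is bounded by $\tau^{-1}Ch^2(\norm{\bu^{m+1}}_2+\norm{\bu^m}_2)\norm{\cR\bm z^{m+1}}_0$. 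After Young, this becomes $\norm{\cR\bm z^{m+1}}_0^2+C(h^4/\tau^2)(\norm{\bu^{m+1}}_2^2+\norm{\bu^m}_2^2)$.
\end{itemize}

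Multiplying by $2\tau$ and summing over $m=0,\dots,n-1$ yields $\norm{\cR\bm z^n}_0^2+\mu\kappa_1\tau\sum\enorm{\bm z^{m+1}}^2$ on the left. The right side is $\norm{\cR\bm z^0}_0^2+C\tau\sum\norm{\cR\bm z^{m+1}}_0^2$ plus the data terms, where $\tau\sum$ of the squared Taylor remainders gives $\tau^2\norm{\partial_{tt}\bu}^2_{L^2(t_0,t_n;L^2)}$. A discrete Gr\"onwall inequality, assuming $\tau$ is small enough to absorb the $m+1$ term, closes the estimate for $\bm z$.

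Finally I return to $\be_h=\bm z-\bw$ by the triangle inequality, using $\norm{\cR\bw^m}_0\le Ch^2\norm{\bu^m}_2$ and $\mu\enorm{\bw^m}^2\le C\mu h^2\norm{\bu^m}_2^2$. These contributions are dominated by the $(h^4/\tau^2+\mu h^2+h^4)\norm{\bu}^2_{\ell^2}$ terms, using $\tau\lesssim 1$ where needed.

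The main obstacle is the bookkeeping in the corrector step. This is where the $\tau^{-1}$ enters, so the $h^4/\tau^2$ factor is unavoidable without more time regularity of $\bw$. In addition, the $\bu^0$ and $\bu^n$ contributions from $\bw^0$ and $\bw^n$ are not literally part of the $\ell^2(t_0,t_n)$ sum. I would absorb them into the generic constant, or into $\norm{\cR\be_h^0}_0$, treating $\bu\in L^\infty(H^2)$ by \eqref{eqn: up_regularity} and $\tau\le 1$.
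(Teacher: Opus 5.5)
Your argument is correct relative to the estimates the paper supplies, but it follows a genuinely different route.

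\textbf{The paper's route.} It tests \eqref{eqn: ur_euler_erreqn1} directly with $\be_h^{m+1}$, so the term $\bb(\bm\chi_h^{m+1},\epsilon_h^{m+1})$ survives. It then bounds $\norm{\epsilon_h^{m+1}}_0$ by the momentum residual via the inf-sup condition, obtaining \eqref{eqn: intermediate_bound_1}, which contains the piece $\frac{h}{\tau}\norm{\cR(\be_h^{m+1}-\be_h^m)}_0$. It substitutes this back into the energy inequality. The $h^4/\tau^2$ term then arises from $h^2\norm{\cR(\be_h^{m+1}-\be_h^m)}_0\norm{\bu^{m+1}}_2$ and Young's inequality.

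\textbf{Your route.} You eliminate the pressure from the velocity analysis by a discretely divergence-free shift, and your $h^4/\tau^2$ comes from the corrector increment. The ingredients are the same (\eqref{eqn: supp_conti2}, \eqref{eqn: inf-sup}, \eqref{eqn: norm_bound_RtoE}), and so are the final bounds. The paper's route has the side benefit of producing \eqref{eqn: intermediate_bound_1}, which it reuses for Theorem~\ref{thm: euler_pressure_error_estimate}. Yours makes pressure-robustness structural, since $\epsilon_h$ never enters the velocity estimate.

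\textbf{A bookkeeping fix.} The term $\frac{h^4}{\tau}\norm{\bu^0}_2^2$ cannot be absorbed into a constant independent of $\bu$, but it need not arise at all. Since $\bm z^0$ is never paired with the pressure, set $\bw^0:=\mathbf{0}$. Then every increment involves only $\bu^1,\dots,\bu^n$. The endpoint term $\norm{\cR\bw^n}_0^2\le Ch^4\norm{\bu^n}_2^2$ lies inside the $\ell^2$ term for $\tau\le 1$.

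\textbf{A caveat that applies to both proofs.} The factor of $h$ that turns $h^2/\tau^2$ into $h^4/\tau^2$ comes from \eqref{eqn: norm_bound_RtoE}, and that bound cannot hold on all of $\Vh$. For $\bv\in\Ch$ one has $\cR\bv=\bv$ and $\enorm{\bv}=\norm{\nabla\bv}_0$. The inequality $\norm{\bv}_0\le Ch\norm{\nabla\bv}_0$ then fails for nodal interpolants of a fixed smooth field as $h\to 0$. In general only $\norm{\cR\bv}_0\le C\enorm{\bv}$ is available.

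Here your route has an advantage.
\begin{enumerate}
\item Choose the corrector linearly, $\bw^m=S\bu^m$, for instance the minimal-$\enorm{\cdot}$ solution of $\bb(\bw^m,\cdot)=\bb(\bm\chi_h^m,\cdot)$.
\item Then $\enorm{\bw^{m+1}-\bw^m}\le Ch\norm{\bu^{m+1}-\bu^m}_2\le Ch\tau^{1/2}\norm{\partial_t\bu}_{L^2(t_m,t_{m+1};[H^2(\Omega)]^d)}$, so the $\tau^{-1}$ cancels.
\item This gives a pressure-robust $O(h+\tau)$ velocity bound with no $h^4/\tau^2$ coupling. It uses only $\norm{\cR\bv}_0\le C\enorm{\bv}$, at the price of the extra assumption $\partial_t\bu\in L^2(I;[H^2(\Omega)]^d)$.
\item In this variant, keep $\bw^0=S\bu^0$ rather than $\bw^0=\mathbf{0}$.
\end{enumerate}
In the paper's route, as written, the coupling is carried by the discrete increment $\cR(\be_h^{m+1}-\be_h^m)$ inside the pressure bound, where extra regularity of $\bu$ is of no direct use.
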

\begin{proof}
    We first rewrite the error equation \eqref{eqn: ur_euler_erreqn1} as
    \begin{align*}
        \mathbf{b}(\mathbf{v},\epsilon_h^{m+1}) &= \frac{1}{\tau}(\mathcal{R}(\mathbf{e}_h^{m+1}-\mathbf{e}_h^m),\mathcal{R}\mathbf{v})_{\mathcal{T}_h}+\mu\mathbf{a}(\mathbf{e}_h^{m+1},\mathbf{v}) -l_1(\mathbf{u}^{m+1},\mathbf{v})-l_2(\mathbf{u}^{m+1},\mathbf{v}) + l^{m}_{\mathtt{BE}}(\mathbf{u},\mathbf{v}).
    \end{align*}
    It follows from the Cauchy-Schwarz inequality,
the continuity estimate \eqref{eqn: ba_conti},
and the supplemental estimates \eqref{sys: ur_suppest}
that
    \begin{align*}
        |\mathbf{b}(\mathbf{v},\epsilon_h^{m+1})|&\leq \frac{1}{\tau}\norm{\mathcal{R}(\mathbf{e}_h^{m+1}-\mathbf{e}_h^m)}_0\norm{\mathcal{R}\mathbf{v}}_0+\mu\kappa_2\enorm{\mathbf{e}_h^{m+1}}\enorm{\mathbf{v}}+ C\mu h\norm{\bu^{m+1}}_2\enorm{\bv} \\
        &\qquad\quad +Ch\norm{\partial_t\bu}_{L^\infty(t_m,t_{m+1};[H^1(\Omega)]^d)}\norm{\mathcal{R}\bv}_0+ C\tau^{1/2}\norm{\partial_{tt}\bu}_{L^2(t_m,t_{m+1};[L^2(\Omega)]^d)}\norm{\mathcal{R}\bv}_0.
    \end{align*}
    Applying the discrete inf-sup condition \eqref{eqn: inf-sup} together with the norm bound \eqref{eqn: norm_bound_RtoE}, we obtain
    \begin{align*}
        \gamma\enorm{\bv}\norm{\epsilon_h^{m+1}}_0&\leq C\frac{h}{\tau}\norm{\mathcal{R}(\mathbf{e}_h^{m+1}-\mathbf{e}_h^m)}_0\enorm{\mathbf{v}}+\mu\kappa_2\enorm{\mathbf{e}_h^{m+1}}\enorm{\mathbf{v}}+C\mu h\norm{\bu^{m+1}}_2\enorm{\bv} \\
        &\qquad\quad +Ch^2\norm{\partial_t\bu}_{L^\infty(t_m,t_{m+1};[H^1(\Omega)]^d)}\enorm{\bv}+ Ch\tau^{1/2}\norm{\partial_{tt}\bu}_{L^2(t_m,t_{m+1};[L^2(\Omega)]^d)}\enorm{\bv}.
    \end{align*}
   Dividing by \(\enorm{\mathbf v}\), we obtain
    \begin{align}
        \norm{\epsilon_h^{m+1}}_0&\leq C\frac{h}{\tau}\norm{\mathcal{R}(\mathbf{e}_h^{m+1}-\mathbf{e}_h^m)}_0+C\mu\kappa_2\enorm{\mathbf{e}_h^{m+1}}+ C\mu h\norm{\bu^{m+1}}_2\nonumber\\
        &\qquad\quad+Ch^2\norm{\partial_t\bu}_{L^\infty(t_m,t_{m+1};[H^1(\Omega)]^d)}+ Ch\tau^{1/2}\norm{\partial_{tt}\bu}_{L^2(t_m,t_{m+1};[L^2(\Omega)]^d)}.\label{eqn: intermediate_bound_1}
    \end{align}
    Choosing \(\bv=\be_h^{m+1}\) and \(q=\epsilon_h^{m+1}\)
in \eqref{sys: ur_euler_erreqn}
and substituting
\eqref{eqn: ur_euler_erreqn2}
into
\eqref{eqn: ur_euler_erreqn1},
we have
    \begin{align*}
        \frac{1}{\tau}(\mathcal{R}(\mathbf{e}_h^{m+1}-\mathbf{e}_h^m),\mathcal{R}\mathbf{e}_h^{m+1})+&\mu\mathbf{a}(\mathbf{e}_h^{m+1},\mathbf{e}_h^{m+1}) + \mathbf{b}(\bm{\chi}_h^{m+1},\epsilon_h^{m+1})\\
&=l_1(\mathbf{u}^{m+1},\mathbf{e}_h^{m+1})+l_2(\mathbf{u}^{m+1},\mathbf{e}_h^{m+1}) - l^{m}_{\mathtt{BE}}(\mathbf{u},\mathbf{e}_h^{m+1}).
    \end{align*}
    Moreover, \eqref{eqn: supp_conti2} implies
    \begin{equation}
        \mathbf{b}(\bm{\chi}_h^{m+1},\epsilon_h^{m+1})\leq Ch\norm{\epsilon_h^{m+1}}_0\norm{\bu^{m+1}}_2.\label{eqn: intermediate_bound_2}
        \end{equation}
    Using
\eqref{eqn: ba_coer}, \eqref{sys: ur_suppest}, and \eqref{eqn: intermediate_bound_2},
and then multiplying by \(\tau\), we arrive at
    \begin{align*}
        \norm{\cR\be_h^{m+1}}_0^2+\mu\kappa_1\tau\enorm{\be_h^{m+1}}^2&\leq \norm{\cR\be_h^{m+1}}_0\norm{\cR\be_h^{m}}_0+ Ch\tau\norm{\epsilon_h^{m+1}}_0\norm{\bu^{m+1}}_2\\
        &\qquad\quad+C\mu h\tau\norm{\bu^{m+1}}_2\enorm{\be_h^{m+1}}\\
        &\qquad\quad+Ch\tau\norm{\partial_t\bu}_{L^\infty(t_m,t_{m+1};[H^1(\Omega)]^d)}\norm{\cR\be_h^{m+1}}_0\\
        &\qquad\quad+ C\tau^{3/2}\norm{\partial_{tt}\bu}_{L^2(t_m,t_{m+1};[L^2(\Omega)]^d)}\norm{\cR\be_h^{m+1}}_0.
    \end{align*}
   Substituting \eqref{eqn: intermediate_bound_1}
into the previous inequality yields
    \begin{align*}
        \norm{\cR\be_h^{m+1}}_0^2+\mu\kappa_1\tau\enorm{\be_h^{m+1}}^2&\leq \norm{\cR\be_h^{m+1}}_0\norm{\cR\be_h^{m}}_0 + Ch^2\norm{\cR(\be_h^{m+1}-\be_h^{m})}_0\norm{\bu^{m+1}}_2\\
        &\quad\qquad+C\mu\kappa_2 h\tau\norm{\bu^{m+1}}_2\enorm{\be_h^{m+1}} + C\mu h^2\tau\norm{\bu^{m+1}}_2^2\\
        &\quad\qquad +Ch^3\tau\norm{\partial_t\bu}_{L^\infty(t_m,t_{m+1};[H^1(\Omega)]^d)}\norm{\bu^{m+1}}_2\\
        &\quad\qquad + Ch^2\tau^{3/2}\norm{\partial_{tt}\bu}_{L^2(t_m,t_{m+1};[L^2(\Omega)]^d)}\norm{\bu^{m+1}}_2\\
        &\qquad\quad+C\mu h\tau\norm{\bu^{m+1}}_2\enorm{\be_h^{m+1}} \\
        &\qquad\quad+Ch\tau\norm{\partial_t\bu}_{L^\infty(t_m,t_{m+1};[H^1(\Omega)]^d)}\norm{\cR\be_h^{m+1}}_0\\
        &\qquad\quad+ C\tau^{3/2}\norm{\partial_{tt}\bu}_{L^2(t_m,t_{m+1};[L^2(\Omega)]^d)}\norm{\cR\be_h^{m+1}}_0.
    \end{align*}
    Using
\[
\norm{\cR(\be_h^{m+1}-\be_h^m)}_0
\le
\norm{\cR\be_h^{m+1}}_0+\norm{\cR\be_h^m}_0,
\]
 together with Young's inequality,
we obtain
    \begin{align*}
        \norm{\cR\be_h^{m+1}}_0\norm{\cR\be_h^{m}}_0&\leq \frac{1}{2}\norm{\cR\be_h^{m+1}}_0^2 + \frac{1}{2}\norm{\cR\be_h^{m}}_0^2,\\
        h^2\norm{\bu^{m+1}}_2\norm{\cR\be_h^{m+1}}_0&\leq \frac{\tau}{2\alpha}\norm{\cR\be_h^{m+1}}_0^2+\frac{\alpha h^4}{2\tau}\norm{\bu^{m+1}}_2^2,\\
        \mu h\tau\norm{\bu^{m+1}}_2\enorm{\be_h^{m+1}}&\leq \frac{\mu\tau}{2\alpha}\enorm{\be_h^{m+1}}^2+\frac{\alpha \mu h^2\tau}{2}\norm{\bu^{m+1}}_2^2,\\
        h^3\tau\norm{\partial_t\bu}_{L^\infty(t_m,t_{m+1};[H^1(\Omega)]^d)}\norm{\bu^{m+1}}_2&\leq \frac{h^4\tau}{\alpha}\norm{\bu^{m+1}}_2^2+ \frac{\alpha h^2\tau}{2}\norm{\partial_t\bu}_{L^\infty(t_m,t_{m+1};[H^1(\Omega)]^d)}^2,
        \\
        h^2\tau^{3/2}\norm{\partial_{tt}\bu}_{L^2(t_m,t_{m+1};[L^2(\Omega)]^d)}\norm{\bu^{m+1}}_2&\leq \frac{h^4\tau}{\alpha}\norm{\bu^{m+1}}_2^2  + \frac{\alpha \tau^2}{2}\norm{\partial_{tt}\bu}_{L^2(t_m,t_{m+1};[L^2(\Omega)]^d)}^2,
        \\
        h\tau\norm{\partial_t\bu}_{L^\infty(t_m,t_{m+1};[H^1(\Omega)]^d)}\norm{\cR\be_h^{m+1}}_0&\leq  \frac{\tau}{2\alpha}\norm{\cR\be_h^{m+1}}_0^2 + \frac{\alpha h^2\tau}{2}\norm{\partial_t\bu}_{L^\infty(t_m,t_{m+1};[H^1(\Omega)]^d)}^2,\\
        \tau^{3/2}\norm{\partial_{tt}\bu}_{L^2(t_m,t_{m+1};[L^2(\Omega)]^d)}\norm{\cR\be_h^{m+1}}_0&\leq \frac{\tau}{2\alpha}\norm{\cR\be_h^{m+1}}_0^2 + \frac{\alpha \tau^2}{2}\norm{\partial_{tt}\bu}_{L^2(t_m,t_{m+1};[L^2(\Omega)]^d)}^2,
    \end{align*}
    where the constant \(\alpha>0\) may vary from line to line. The remaining terms are treated similarly.
    Consequently, choosing \(\alpha>0\) appropriately yields
    \begin{align*}
        (1-C_0\tau)\norm{\cR\be_h^{m+1}}_0^2 + C\mu\tau\enorm{\be_h^{m+1}}^2&\leq (1+C_1\tau)\norm{\cR\be_h^{m}}_0^2 + C\frac{h^4}{\tau}\norm{\bu^{m+1}}_2^2\\
        &\quad\qquad+C\mu h^2\tau\norm{\bu^{m+1}}_2^2 + Ch^4\tau \norm{\bu^{m+1}}_2^2 \\
        &\qquad\quad + Ch^2\tau\norm{\partial_t\bu}_{L^\infty(t_m,t_{m+1};[H^1(\Omega)]^d)}^2\\
        &\qquad\quad + C\tau^2\norm{\partial_{tt}\bu}_{L^2(t_m,t_{m+1};[L^2(\Omega)]^d)}^2.
    \end{align*}
    Applying a discrete Gr\"onwall's inequality gives
    \begin{align*}
    \|\mathcal{R}\mathbf{e}_h^n\|_0^2 + C\mu \left(\tau\sum_{m=0}^{n-1}\|\mathbf{e}_h^{m+1}\|_\mathcal{E}^2\right)&\leq C^\dagger\|\mathcal{R}\mathbf{e}_h^0\|_0^2 + C^\dagger C\frac{h^4}{\tau^2}\left(\tau\sum_{m=0}^{n-1}\|\mathbf{u}^{m+1}\|_2^2\right)\\
    &\qquad\quad+C^\dagger C(\mu h^2+h^4)\left(\tau\sum_{m=0}^{n-1}\|\mathbf{u}^{m+1}\|_2^2\right)\\
    &\qquad\quad+C^\dagger C h^2\left((t_n-t_0)\norm{\partial_t\bu}_{L^\infty(t_0,t_n;[H^1(\Omega)]^d)}^2\right)\\
    &\qquad\quad+C^\dagger C\tau^2\left(\norm{\partial_{tt}\bu}_{L^2(t_0,t_{n};[L^2(\Omega)]^d)}^2\right),
\end{align*}
    where $C^\dagger>0$ is a constant, independent of $h$, $\tau$, and $n$ for sufficiently small $\tau$, satisfying
\[
\left(\frac{1+C_1\tau}{1-C_0\tau}\right)^n \le C^\dagger
\] 
for all $n\tau\le t_f$.
Absorbing \(C^\dagger\) and \(t_n-t_0\le t_f-t_0\) into the generic constant \(C\), we obtain the desired estimate.
\end{proof}

\begin{remark}[Pressure-robustness]
We emphasize that the right-hand side of the estimate in
Theorem~\ref{thm: euler_error_estimate} contains no pressure
term. This reflects the pressure-robust property of the \texttt{FPR-BE} scheme:
since the pressure never enters the velocity error bound, no inverse
powers of the viscosity parameter $\mu$ appear either. As a result,
the velocity error bound does not deteriorate as $\mu$ becomes small.
\end{remark}

\begin{remark}[Space-time coupling]\label{remark: space-time_coupling}
The estimate in Theorem~\ref{thm: euler_error_estimate} contains the mixed space-time contribution \(h^4/\tau^2\), which corresponds to \(h^2/\tau\) at the level of the velocity error. Similar mesh-time coupling terms appear in fully discrete analyses of related
time-dependent Stokes discretizations \cite{feng2021analysis,lv2024pressure}.
This term shows
that the present estimate does not support arbitrary independent
refinement of \(h\) and \(\tau\). In particular, taking \(\tau\) much
smaller than \(h\) may cause the mixed contribution to dominate the
bound.

To balance the errors arising from the spatial discretization and the backward Euler time stepping, assume that \(\mathbf e_h^0=\mathbf{0}\) and choose \(\tau\simeq h\), i.e., there exist positive constants
\(c_0, c_1\), independent of \(h\) and \(\tau\), such that
\(c_0h\le \tau\le c_1h\).
Then, \(h^2/\tau=\mathcal{O}(h)\), and
Theorem~\ref{thm: euler_error_estimate} implies
\begin{equation}
    \|\cR\be_h^n\|_0 \le C(h + h^2+\sqrt{\mu}h),
    \label{eqn: error_bound_BE_velocity}
\end{equation}
which gives first-order convergence with respect to \(h\).

Alternatively, choosing \(\tau\simeq h^{1/2}\) reduces the mixed contribution to
\(h^2/\tau=O(h^{3/2})\) at the level of the velocity error. However, the first-order temporal consistency error becomes \(O(\tau)=O(h^{1/2})\) and therefore dominates the overall estimate, resulting in a suboptimal \(O(h^{1/2})\) convergence rate with respect to \(h\).
This behavior is verified numerically in Section~\ref{sec:numerical_experiments}.
\end{remark}

\begin{theorem}\label{thm: euler_pressure_error_estimate}
Under the hypotheses of Theorem~\ref{thm: euler_error_estimate}, suppose additionally that \(\mathbf e_h^0=\mathbf{0}\) and \(\tau\simeq h\). Then, for any \(1\le n\le N\),
the pressure error satisfies
\begin{equation*}
\norm{\epsilon_h}_{\ell^2(t_0,t_n;L^2(\Omega))}
\le C(h + h^2 + \sqrt{\mu}h +\mu h + \sqrt{\mu}h^2). 
\end{equation*}
\end{theorem}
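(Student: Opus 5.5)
The plan is to start from the pointwise-in-time pressure bound \eqref{eqn: intermediate_bound_1}, which was derived inside the proof of Theorem~\ref{thm: euler_error_estimate} using the inf-sup condition \eqref{eqn: inf-sup} and the bound \eqref{eqn: norm_bound_RtoE}. We square it, multiply by $\tau$, and sum over $m=0,\dots,n-1$. This gives
\begin{align*}
\norm{\epsilon_h}^2_{\ell^2(t_0,t_n;L^2(\Omega))}
&\le C\frac{h^2}{\tau}\sum_{m=0}^{n-1}\norm{\cR(\be_h^{m+1}-\be_h^m)}_0^2
+C\mu^2\norm{\be_h}^2_{\ell^2(t_0,t_n;\mathcal{E})}
+C\mu^2h^2\norm{\bu}^2_{\ell^2(t_0,t_n;[H^2(\Omega)]^d)}\\
&\qquad+Ch^4\norm{\partial_t\bu}^2_{L^\infty(t_0,t_n;[H^1(\Omega)]^d)}
+Ch^2\tau^2\norm{\partial_{tt}\bu}^2_{L^2(t_0,t_n;[L^2(\Omega)]^d)}.
\end{align*}
In the last term we use $\tau\sum_m \tau\norm{\partial_{tt}\bu}^2_{L^2(t_m,t_{m+1})}=\tau^2\norm{\partial_{tt}\bu}^2_{L^2(t_0,t_n)}$. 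With $\tau\simeq h$, the last two terms contribute $O(h^2)$ or better to the square.

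For the viscous term, Theorem~\ref{thm: euler_error_estimate} with $\be_h^0=\mathbf 0$ and $\tau\simeq h$ gives
\[
\mu\norm{\be_h}^2_{\ell^2(\mathcal{E})}\le C(h^2+\mu h^2+h^4).
\]
Hence
\[
\mu^2\norm{\be_h}^2_{\ell^2(\mathcal{E})}\le C\mu(h^2+\mu h^2+h^4).
\]
After taking square roots, this produces the $\sqrt{\mu}h+\mu h+\sqrt{\mu}h^2$ contributions in the statement. The interpolation term $\mu h\norm{\bu}$ is absorbed into the $\mu h$ contribution.

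The main obstacle is the time-increment term $\frac{h^2}{\tau}\sum_m\norm{\cR(\be_h^{m+1}-\be_h^m)}_0^2$. The cheapest route is the crude bound
\[
\norm{\cR(\be_h^{m+1}-\be_h^m)}_0^2\le 2\norm{\cR\be_h^{m+1}}_0^2+2\norm{\cR\be_h^m}_0^2
\le C\bigl(h^2+h^4+\mu h^2\bigr),
\]
using \eqref{eqn: error_bound_BE_velocity} at each level. Summing over $n\le N\simeq 1/\tau$ steps then yields
\[
\frac{h^2}{\tau}\cdot\frac{1}{\tau}\cdot C(h^2+\mu h^2)=C\frac{h^4}{\tau^2}(1+\mu),
\]
which is $O(h^2)$ when $\tau\simeq h$. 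This is exactly where the coupling $\tau\simeq h$ is needed. If a sharper bound is wanted, I would instead recover $\sum_m\norm{\cR(\be_h^{m+1}-\be_h^m)}_0^2$ from the energy argument. This uses the identity $2(a-b,a)=\norm a^2-\norm b^2+\norm{a-b}^2$ for the time-derivative term in the proof of Theorem~\ref{thm: euler_error_estimate}, rather than the Cauchy–Schwarz step used there, and so retains the numerical dissipation. Either way, taking square roots of all terms gives the claimed bound $C(h+h^2+\sqrt{\mu}h+\mu h+\sqrt{\mu}h^2)$.
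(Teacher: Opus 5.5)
Your proposal is correct and follows the paper's proof essentially step for step. You square \eqref{eqn: intermediate_bound_1}, multiply by $\tau$ and sum, bound the increment term crudely through \eqref{eqn: error_bound_BE_velocity} together with $n\le C/\tau$ (so that $\tau\simeq h$ turns $h^4/\tau^2$ into $h^2$), and bound the viscous term $\mu^2\norm{\be_h}^2_{\ell^2(t_0,t_n;\mathcal{E})}$ via Theorem~\ref{thm: euler_error_estimate}; your summed inequality also retains the $O(h^4)$ contribution $Ch^4\norm{\partial_t\bu}^2_{L^\infty(t_0,t_n;[H^1(\Omega)]^d)}$, which the paper's version \eqref{eqn: pressure_sum_bound_BE} omits and which does not affect the final rate.
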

\begin{proof}
Squaring both sides of \eqref{eqn: intermediate_bound_1}, multiplying by \(\tau\), and summing over
\(m=0,\dots,n-1\) give
\begin{align}
\tau\sum_{m=0}^{n-1}\norm{\epsilon_h^{m+1}}_0^2
&\leq
C\frac{h^2}{\tau}
\sum_{m=0}^{n-1}
\norm{\mathcal R(\be_h^{m+1}-\be_h^m)}_0^2
+
C\mu^2
\left(\tau\sum_{m=0}^{n-1}\enorm{\be_h^{m+1}}^2\right) \nonumber\\
&\quad\qquad
+
C\mu^2 h^2
\left(
\tau\sum_{m=0}^{n-1}\norm{\bu^{m+1}}_2^2
\right)
+
Ch^2\tau^2
\norm{\partial_{tt}\bu}_{L^2(t_0,t_n;[L^2(\Omega)]^d)}^2 .
\label{eqn: pressure_sum_bound_BE}
\end{align}
Since \(n\tau= t_n-t_0\), we have \(n\le C/\tau\), and the bound \eqref{eqn: error_bound_BE_velocity} implies
\[
\sum_{m=0}^{n-1}
\norm{\mathcal R(\be_h^{m+1}-\be_h^m)}_0^2
\le
\frac{C}{\tau}(h^2+h^4 + \mu h^2).
\]
Hence, using \(\tau\simeq h\), the first term on the right-hand side of \eqref{eqn: pressure_sum_bound_BE} satisfies
\[
C\frac{h^2}{\tau}
\sum_{m=0}^{n-1}
\norm{\mathcal R(\be_h^{m+1}-\be_h^m)}_0^2
\le
C\frac{h^2}{\tau^2}(h^2+h^4 + \mu h^2)\leq C (h^2+ h^4 + \mu h^2 ).
\]
For the second term, Theorem~\ref{thm: euler_error_estimate} gives
\[
C\mu^2\left(\tau\sum_{m=0}^{n-1}\enorm{\be_h^{m+1}}^2\right)\leq C\mu(h^2 + h^4+ \mu h^2 ).
\]
Substituting these estimates into
\eqref{eqn: pressure_sum_bound_BE} yields
\[
\tau\sum_{m=0}^{n-1}
\norm{\epsilon_h^{m+1}}_0^2
\le C(h^2 +h^4 + \mu h^2 + \mu^2h^2 + \mu h^4 ).
\]
Taking the square root completes the proof.
\end{proof}

\begin{remark}
The estimate in Theorem~\ref{thm: euler_pressure_error_estimate} gives
a first-order bound with respect to $h$, with no inverse powers of the viscosity
parameter $\mu$. Hence, the pressure error estimate does not deteriorate
as $\mu$ becomes small. As $\mu$ decreases, the terms $\sqrt{\mu}h$ and
$\mu h$ diminish, leaving the higher-order term $h^2$ to dominate the
bound. This offers a possible explanation for the faster convergence
orders observed for small $\mu$ in Section~\ref{sec:numerical_experiments}.
\end{remark}

\subsection{Error estimates for the FPR-CN method}

For the Crank-Nicolson analysis, we use the midpoint average notation
\[
\bar{\bw}^{m+1/2}:=\frac{\bw^{m+1}+\bw^m}{2}
\]
for any time-dependent quantity \(\bw\), and write
$\bar{\bu}^{m+1/2}$,
$\bar p^{m+1/2}$,
$\bar{\mathbf{f}}^{m+1/2}$,
and $\bar{\bu}_h^{m+1/2}$
for the corresponding averages of the exact and discrete variables.
The averaged error components are defined by
\[
\bar{\bm{\chi}}_h^{m+1/2}
:=
\bar{\bu}^{m+1/2}-\Pi_h\bar{\bu}^{m+1/2},
\qquad
\bar{\be}_h^{m+1/2}
:=
\Pi_h\bar{\bu}^{m+1/2}-\bar{\bu}_h^{m+1/2},
\]
and
\[
\bar{\xi}_h^{m+1/2}
:=
\bar p^{m+1/2}-\mathcal P_0\bar p^{m+1/2},
\qquad
\bar{\epsilon}_h^{m+1/2}
:=
\mathcal P_0\bar p^{m+1/2}-p_h^{m+1/2},
\]
so that
$
\bar{\bu}^{m+1/2}-\bar{\bu}_h^{m+1/2}
=
\bar{\bm{\chi}}_h^{m+1/2}
+
\bar{\be}_h^{m+1/2}$, and
$\bar p^{m+1/2}- p_h^{m+1/2}
=
\bar{\xi}_h^{m+1/2}
+
\bar{\epsilon}_h^{m+1/2}$.

\begin{lemma}\label{lemma: cn_erreqn}
For any \(\bv\in\Vh\) and \(q\in Q_h\), the error components satisfy
\begin{subequations}\label{sys: cn_erreqn}
\begin{alignat}{2}
\frac{1}{\tau}
(\mathcal R(\be_h^{m+1}-\be_h^m),\mathcal R\bv)
+\mu\ba(\bar{\be}_h^{m+1/2}&,\bv)
-\bb(\bv,\bar{\epsilon}_h^{m+1/2})\nonumber\\
&=
l_1(\bar{\bu}^{m+1/2},\bv)
+l_2(\bar{\bu}^{m+1/2},\bv)
-l^{m}_{\mathtt{CN}}(\bu,\bv),
\label{eqn: cn_erreqn1}
\\
\bb(\bar{\be}_h^{m+1/2},q)
&=
-\bb(\bar{\bm{\chi}}_h^{m+1/2},q),
\label{eqn: cn_erreqn2}
\end{alignat}
\end{subequations}
where $l_1(\cdot,\cdot)$ and $l_2(\cdot,\cdot)$ are as defined in Lemma~\ref{lemma: be_fpr_erreqn}, and
\begin{align*}
l^{m}_{\mathtt{CN}}(\bu,\bv)
&:=
\left(\frac{\partial_t\bu^{m+1}+\partial_t\bu^{m}}{2},\mathcal R\bv\right)
-\frac1{\tau}
(\mathcal R\Pi_h(\bu^{m+1}-\bu^m),\mathcal R\bv).
\end{align*}
\end{lemma}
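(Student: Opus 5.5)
The plan is to mirror the proof of Lemma~\ref{lemma: be_fpr_erreqn}, replacing the single-level evaluation at \(t_{m+1}\) by the average of the continuous equation at \(t_{m+1}\) and \(t_m\).

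We first test \eqref{eqn: time_Stokes1} at \(t=t_{m+1}\) and at \(t=t_m\) against \(\mathcal R\bv\), add the two identities, and divide by two. This gives
\[
\left(\tfrac{\partial_t\bu^{m+1}+\partial_t\bu^{m}}{2},\mathcal R\bv\right)-\mu(\Delta\bar{\bu}^{m+1/2},\mathcal R\bv)+(\nabla\bar p^{m+1/2},\mathcal R\bv)=(\bar{\mathbf f}^{m+1/2},\mathcal R\bv).
\]
Since \(\bar{\bu}^{m+1/2}\in[H_0^1(\Omega)\cap H^2(\Omega)]^d\) and \(\bar p^{m+1/2}\in H^1(\Omega)\), the three identities from the backward Euler proof apply verbatim with \(\bu^{m+1}\) and \(p^{m+1}\) replaced by these averages.

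For the viscous term, we write \(\mathcal R\bv=\bv+(\mathcal R\bv-\bv)\) and use \(-(\Delta\bw,\bv)=\ba(\bw,\bv)\). This gives
\[
-\mu(\Delta\bar{\bu}^{m+1/2},\mathcal R\bv)=\mu\ba(\Pi_h\bar{\bu}^{m+1/2},\bv)-l_1(\bar{\bu}^{m+1/2},\bv)-l_2(\bar{\bu}^{m+1/2},\bv).
\]
For the pressure term, elementwise integration by parts together with normal continuity of \(\mathcal R\bv\) and piecewise-constant divergence gives
\[
(\nabla\bar p^{m+1/2},\mathcal R\bv)=-\bb(\bv,\mathcal P_0\bar p^{m+1/2}).
\]
For the time-derivative term, we add and subtract \(\tau^{-1}(\mathcal R\Pi_h(\bu^{m+1}-\bu^m),\mathcal R\bv)\). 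By definition, what remains is exactly \(l^m_{\mathtt{CN}}(\bu,\bv)\).

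Next we subtract \eqref{eqn: CN-UR-EG-mom}, rewritten as
\[
\tfrac1\tau(\mathcal R(\bu_h^{m+1}-\bu_h^m),\mathcal R\bv)+\mu\ba(\bar\bu_h^{m+1/2},\bv)-\bb(\bv,p_h^{m+1/2})=(\bar{\mathbf f}^{m+1/2},\mathcal R\bv).
\]
The forcing terms cancel. Linearity of \(\Pi_h\) and \(\mathcal R\) gives \(\Pi_h\bu^{m+1}-\bu_h^{m+1}=\be_h^{m+1}\), and likewise at level \(m\). We also use \(\Pi_h\bar\bu^{m+1/2}-\bar\bu_h^{m+1/2}=\bar\be_h^{m+1/2}\) and \(\mathcal P_0\bar p^{m+1/2}-p_h^{m+1/2}=\bar\epsilon_h^{m+1/2}\). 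Together these yield \eqref{eqn: cn_erreqn1}.

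For \eqref{eqn: cn_erreqn2}, the discrete solution satisfies \(\bb(\bu_h^{k},q)=0\) for \(k=m,m+1\). For \(k=m+1\) this follows from \eqref{eqn: CN-UR-EG-cont}. For \(k=m\) it follows from the previous step; if \(m=0\), it holds because \(\bb(\Pi_h\bu^0,q)=\bb(\bu^0,q)=0\) by construction of \(\Pi_h^D\). Hence \(\bb(\bar\bu_h^{m+1/2},q)=0\). Since \(\bar\bu^{m+1/2}\) is divergence-free and continuous, \(\bb(\bar\bu^{m+1/2},q)=0\), which gives
\[
\bb(\bar\be_h^{m+1/2},q)=\bb(\Pi_h\bar\bu^{m+1/2}-\bar\bu^{m+1/2},q)=-\bb(\bar{\bm\chi}_h^{m+1/2},q).
\]

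The main point requiring care is the bookkeeping at the initial level. The error equation for the continuity constraint requires the discrete velocity at level \(m\) to be discretely divergence-free. This holds inductively, with the base case \(m=0\) depending on the property \(\bb(\bu^0-\Pi_h\bu^0,q)=0\) of the interpolant. If one prefers to avoid this, \eqref{eqn: cn_erreqn2} can be stated for the averages directly, since both levels satisfy the analogous relation \eqref{eqn: ur_euler_erreqn2}-type identity.
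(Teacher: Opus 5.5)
Your derivation of the momentum error equation is the paper's own argument. You average the tested equation at $t_m$ and $t_{m+1}$, reuse the three backward-Euler identities with $\bar{\bu}^{m+1/2}$ and $\bar p^{m+1/2}$, and subtract the rearranged \texttt{FPR-CN} equation. For $m\ge 1$ your argument for \eqref{eqn: cn_erreqn2} also matches the paper.

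The gap is in the base case you add. The claim $\bb(\Pih\bu^0,q)=\bb(\bu^0,q)$ ``by construction of $\Pi_h^D$'' is false. The construction only gives $(\nabla\cdot\Pih\bw,1)_T=(\nabla\cdot\bw,1)_T$, which handles the volume part of $\bb$. However, $\Pih\bw$ has nonzero jumps $\jump{\Pih\bw}=\jump{\Pi_h^D\bw}$, so
\[
\bb(\bw-\Pih\bw,q)=\langle \jump{\Pi_h^D\bw}\cdot\bn_e,\avg{q}\rangle_{\Eh},
\]
which is nonzero in general. If it vanished, the right-hand side of \eqref{eqn: cn_erreqn2} would be identically zero, and the paper would have no need for \eqref{eqn: supp_conti2}.

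Consequently $\bu_h^0=\Pih\bu^0$ is not discretely divergence-free. At $m=0$ with $\be_h^0=\mathbf 0$ one actually gets $\bb(\bar\be_h^{1/2},q)=\tfrac12\bb(\be_h^1,q)=-\tfrac12\bb(\bm{\chi}_h^1,q)$. This differs from the stated right-hand side by $\tfrac12\bb(\bm{\chi}_h^0,q)$. Your fallback does not avoid this: at level $0$ the backward-Euler-type identity $\bb(\be_h^0,q)=-\bb(\bm{\chi}_h^0,q)$ is equivalent to $\bb(\bu_h^0,q)=0$, the same false claim.

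The paper's proof also skips this point. It only cites the previous step's continuity equation, which gives nothing at $m=0$. So the gap is shared, but your explicit justification is wrong rather than merely missing. There are two ways to repair it:
\begin{itemize}
\item Start from a discretely divergence-free $\bu_h^0$, i.e.\ $\bb(\bu_h^0,q)=0$ for all $q\in Q_h$. This gives $\be_h^0\neq\mathbf 0$, which the velocity estimate already allows through $\norm{\cR\be_h^0}_0$.
\item Keep $\bu_h^0=\Pih\bu^0$, state \eqref{eqn: cn_erreqn2} only for $m\ge 1$, and use $\bb(\bar\be_h^{1/2},q)=-\tfrac12\bb(\bm{\chi}_h^1,q)$ at $m=0$. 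By \eqref{eqn: supp_conti2} this is bounded by $Ch\norm{q}_0\norm{\bu^1}_2$, so the energy argument in Theorem~\ref{thm:cn_error_estimate} goes through essentially unchanged.
\end{itemize}
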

\begin{proof}
We derive the two error equations separately. Testing the momentum equation \eqref{eqn: time_Stokes1} at \(t=t_m\) and \(t=t_{m+1}\)
against \(\mathcal R\bv\) and averaging the two identities, we obtain
\[
\left(\frac{\partial_t\bu^{m+1}+\partial_t\bu^{m}}{2},\mathcal R\bv\right)
-\mu
(
\Delta\bar{\bu}^{m+1/2},
\mathcal R\bv
)
+
(
\nabla \bar{p}^{m+1/2},
\mathcal R\bv
)
=
(\bar{\bbf}^{m+1/2},\mathcal R\bv).
\]
As in the proof of Lemma~\ref{lemma: be_fpr_erreqn}, we have
\[
-\mu(\Delta\bar{\bu}^{m+1/2},\mathcal R\bv)
=
\mu\ba(\Pi_h\bar{\bu}^{m+1/2},\bv)
-
l_1(\bar{\bu}^{m+1/2},\bv)
-
l_2(\bar{\bu}^{m+1/2},\bv),
\]
and
\[
(\nabla \bar{p}^{m+1/2},\mathcal R\bv)
=
-\bb(\bv,\mathcal P_0\bar{p}^{m+1/2}).
\]
For the time derivative, adding and subtracting the reconstructed difference quotient gives
\[
\left(\frac{\partial_t\bu^{m+1}+\partial_t\bu^{m}}{2},\mathcal R\bv\right)
=
\frac1{\tau}
(\mathcal R\Pi_h(\bu^{m+1}-\bu^m),\mathcal R\bv)
+
l^{m}_{\mathtt{CN}}(\bu,\bv).
\]
Substituting these identities into the averaged equation
yields
\[
\begin{aligned}
\frac1{\tau}
(\mathcal R\Pi_h(\bu^{m+1}-\bu^m),\mathcal R\bv)
&+
\mu\ba(\Pi_h\bar{\bu}^{m+1/2},\bv)
-
\bb(\bv,\mathcal P_0\bar{p}^{m+1/2})
\\
&=(\bar{\bbf}^{m+1/2},\mathcal R\bv)
+
l_1(\bar{\bu}^{m+1/2},\bv)
+
l_2(\bar{\bu}^{m+1/2},\bv)
-
l^{m}_{\mathtt{CN}}(\bu,\bv).
\end{aligned}
\]
Subtracting the discrete \texttt{FPR-CN} momentum equation~\eqref{eqn: CN-UR-EG-mom} gives
\eqref{eqn: cn_erreqn1}.

For \eqref{eqn: cn_erreqn2}, since
\({\bu}^{m+1}\) and \({\bu}^{m}\) are both divergence-free, so is $\bar{\bu}^{m+1/2}$, and the discrete continuity equation \eqref{eqn: CN-UR-EG-cont} together with $\bb(\bu_h^m,q)=0$ (which holds inductively from the discrete continuity equation at the previous time level)
implies
\(
\bb(\bar{\bu}^{m+1/2},q)=0
=
\bb(\bar{\bu}_h^{m+1/2},q)\) for all $q\in Q_h$.
Therefore,
\[
\bb(\bar{\be}_h^{m+1/2},q)
=
\bb(\Pi_h\bar{\bu}^{m+1/2}-\bar{\bu}^{m+1/2},q)
=
-\bb(\bar{\bm{\chi}}_h^{m+1/2},q),
\]
which proves \eqref{eqn: cn_erreqn2}.
\end{proof}

\begin{lemma}\label{lemma: cn_time_estimate}
For any \(\bv\in\Vh\), the temporal consistency term satisfies
\begin{equation}
|l^{m}_{\mathtt{CN}}(\bu,\bv)|
\le
C\big(
h\norm{\partial_t\bu}_{L^\infty(t_m,t_{m+1};[H^1(\Omega)]^d)}
+
\tau^{3/2}
\norm{\partial_{ttt}\bu}_{L^2(t_m,t_{m+1};[L^2(\Omega)]^d)}
\big)
\norm{\mathcal R\bv}_0.   \label{eqn: ur_suppest4}
\end{equation}
\end{lemma}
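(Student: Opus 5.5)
We mirror the proof of \eqref{eqn: ur_suppest3} in Lemma~\ref{lemma: be_fpr_consistency}. Adding and subtracting the exact difference quotient $(\bu^{m+1}-\bu^m)/\tau$ splits
\begin{align*}
l^{m}_{\mathtt{CN}}(\bu,\bv)
&=\left(\frac{\partial_t\bu^{m+1}+\partial_t\bu^{m}}{2}-\frac{\bu^{m+1}-\bu^m}{\tau},\mathcal R\bv\right)\\
&\qquad+\left(\frac{\bu^{m+1}-\bu^m}{\tau}-\frac{\mathcal R\Pi_h(\bu^{m+1}-\bu^m)}{\tau},\mathcal R\bv\right)
=:\mathbf{J}_1+\mathbf{J}_2 .
\end{align*}
The spatial part $\mathbf{J}_2$ is literally the term $\mathbf{I}_2$ from the proof of Lemma~\ref{lemma: be_fpr_consistency}. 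We therefore reuse that bound verbatim: set $\bm\omega=(\bu^{m+1}-\bu^m)/\tau$ and combine \eqref{eqn: Pih_err}, \eqref{eqn: cR-err} and \eqref{eqn: Pih_stability} to get $\|(\mathcal I-\mathcal R\Pi_h)\bm\omega\|_0\le Ch\|\bm\omega\|_1$. Since $\|\bm\omega\|_1\le\|\partial_t\bu\|_{L^\infty(t_m,t_{m+1};[H^1(\Omega)]^d)}$, this gives the first term of \eqref{eqn: ur_suppest4}.

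For $\mathbf{J}_1$ we need the trapezoidal-rule error for $\partial_t\bu$ on $[t_m,t_{m+1}]$, written in Peano-kernel form. Write $\bu^{m+1}-\bu^m=\int_{t_m}^{t_{m+1}}\partial_t\bu\,\mathrm{d}s$. Then
\[
\frac{\partial_t\bu^{m+1}+\partial_t\bu^{m}}{2}-\frac1\tau\int_{t_m}^{t_{m+1}}\partial_t\bu(s)\,\mathrm{d}s
=\frac{1}{2\tau}\int_{t_m}^{t_{m+1}}(s-t_m)(t_{m+1}-s)\,\partial_{ttt}\bu(s)\,\mathrm{d}s .
\]
This is the standard trapezoidal identity applied to $g=\partial_t\bu$ with $g''=\partial_{ttt}\bu$. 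It follows by integrating by parts twice, or by checking that the kernel annihilates linear functions in time. The kernel satisfies $0\le(s-t_m)(t_{m+1}-s)\le\tau^2/4$. Minkowski's inequality and Cauchy--Schwarz in time then give
\[
\left\|\frac{\partial_t\bu^{m+1}+\partial_t\bu^{m}}{2}-\frac{\bu^{m+1}-\bu^m}{\tau}\right\|_0
\le\frac{1}{2\tau}\cdot\frac{\tau^2}{4}\cdot\tau^{1/2}\,\|\partial_{ttt}\bu\|_{L^2(t_m,t_{m+1};[L^2(\Omega)]^d)}
= C\tau^{3/2}\|\partial_{ttt}\bu\|_{L^2(t_m,t_{m+1};[L^2(\Omega)]^d)} .
\]
Combining this with Cauchy--Schwarz against $\mathcal R\bv$ bounds $\mathbf{J}_1$, and adding the bound for $\mathbf{J}_2$ yields \eqref{eqn: ur_suppest4}.

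The only genuinely new step is deriving and estimating the trapezoidal kernel. Its derivation needs $\partial_{ttt}\bu\in L^2(I;[L^2(\Omega)]^d)$, which is an implicit extra regularity assumption for the Crank--Nicolson analysis beyond \eqref{eqn: time_regularity}; we state it alongside the lemma. To justify the identity rigorously, we would prove it first for smooth $\bu$ and extend by density in $H^3(t_m,t_{m+1};[L^2(\Omega)]^d)$. Everything else is inherited from the backward Euler case. The spatial term remains only first order, $O(h)$, because the reconstruction enters the time derivative.
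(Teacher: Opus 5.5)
Your proposal is correct and follows the paper's proof essentially step for step. It uses the same splitting into the trapezoidal-rule consistency error and the interpolation--reconstruction term, and reuses the latter verbatim from the backward Euler lemma. Your explicit Peano-kernel identity checks out and yields the constant $1/8$; it is a concrete form of the paper's ``Taylor's theorem with integral remainder'' step. The extra regularity $\partial_{ttt}\bu\in L^2(I;[L^2(\Omega)]^d)$ you flag is exactly the one the paper adds in Theorem~\ref{thm:cn_error_estimate}.
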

\begin{proof}
By adding and subtracting the standard difference quotient, we decompose
\(l^{m}_{\mathtt{CN}}\) as
\begin{align*}
l^{m}_{\mathtt{CN}}(\bu,\bv) &=  \left(
\frac{\partial_t\bu^{m+1}+\partial_t\bu^m}{2}
-
\frac{\bu^{m+1}-\bu^m}{\tau},
\mathcal R\bv
\right)\\
&\qquad\qquad+\left(
\frac{\bu^{m+1}-\bu^m}{\tau}
-
\frac{\mathcal R\Pi_h(\bu^{m+1}-\bu^m)}{\tau},
\mathcal R\bv
\right)\\
&=:\mathbf{I}_1 + \mathbf{I}_2.
\end{align*}
The estimate for \(\mathbf{I}_2\) follows from the same argument used for the
corresponding interpolation-reconstruction term in the proof of
Lemma~\ref{lemma: be_fpr_consistency}.
It remains to estimate \(\mathbf{I}_1\). By Taylor's theorem with integral
remainder,
\[
\left\|
\frac{\partial_t\bu^{m+1}+\partial_t\bu^m}{2}
-
\frac{\bu^{m+1}-\bu^m}{\tau}
\right\|_0
\le
C\tau^{3/2}
\|\partial_{ttt}\bu\|_{L^2(t_m,t_{m+1};[L^2(\Omega)]^d)}.
\]
Therefore,
\[
|\mathbf{I}_1|
\le
C\tau^{3/2}
\|\partial_{ttt}\bu\|_{L^2(t_m,t_{m+1};[L^2(\Omega)]^d)}
\|\mathcal R\bv\|_0.
\]
Combining this estimate with the bound for \(\mathbf{I}_2\) proves the result.
\end{proof}

\begin{theorem}\label{thm:cn_error_estimate}
Under the regularity assumptions
\eqref{eqn: up_regularity}--\eqref{eqn: time_regularity}
and the additional assumption
\[
\partial_{ttt}\bu\in L^2(I;[L^2(\Omega)]^d),
\]
for any \(1\le n\le N\), the \texttt{FPR-CN} error satisfies
\begin{align*}
    \|\mathcal{R}\mathbf{e}_h^n\|_0^2 + C\mu\left(
\tau\sum_{m=0}^{n-1}
\|\bar{\be}_h^{m+1/2}\|_{\mathcal E}^2
\right) 
    &\leq C\|\mathcal{R}\mathbf{e}_h^0\|_0^2 + C\!\left(\frac{h^4}{\tau^2}+\mu h^2+h^4\right)\!\left(\tau\norm{\bu^0}_2^2+\norm{\bu}^2_{\ell^2(t_0,t_n;[H^2(\Omega)]^d)}\right)\\
    &\qquad\quad+Ch^2\norm{\partial_t\bu}_{L^\infty(t_0,t_n;[H^1(\Omega)]^d)}^2+C\tau^4\norm{\partial_{ttt}\bu}_{L^2(t_0,t_{n};[L^2(\Omega)]^d)}^2.
\end{align*}
\end{theorem}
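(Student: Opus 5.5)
The proof follows the structure of Theorem~\ref{thm: euler_error_estimate}, now built on the error equations of Lemma~\ref{lemma: cn_erreqn} and the consistency bound of Lemma~\ref{lemma: cn_time_estimate}. The key observation is that testing with the midpoint average $\bv=\bar{\be}_h^{m+1/2}$ turns the time-derivative term into an exact telescoping difference:
\[
\frac{1}{\tau}(\mathcal R(\be_h^{m+1}-\be_h^m),\mathcal R\bar{\be}_h^{m+1/2})=\frac{1}{2\tau}\big(\norm{\cR\be_h^{m+1}}_0^2-\norm{\cR\be_h^m}_0^2\big).
\]
This uses only the linearity of $\cR$, and it removes the need for the Young-type splitting of the cross term used in the backward Euler proof.

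\textbf{Step 1: pressure bound.} Exactly as in \eqref{eqn: intermediate_bound_1}, isolate $\bb(\bv,\bar\epsilon_h^{m+1/2})$ in \eqref{eqn: cn_erreqn1}. Then apply the continuity \eqref{eqn: ba_conti}, the bounds \eqref{eqn: ur_suppest1}--\eqref{eqn: ur_suppest2} at $\bar\bu^{m+1/2}$, Lemma~\ref{lemma: cn_time_estimate}, the norm bound \eqref{eqn: norm_bound_RtoE}, and the inf-sup condition \eqref{eqn: inf-sup}. This gives
\[
\norm{\bar\epsilon_h^{m+1/2}}_0\le C\frac{h}{\tau}\norm{\cR(\be_h^{m+1}-\be_h^m)}_0+C\mu\enorm{\bar\be_h^{m+1/2}}+C\mu h\norm{\bar\bu^{m+1/2}}_2+Ch^2\norm{\partial_t\bu}_{L^\infty H^1}+Ch\tau^{3/2}\norm{\partial_{ttt}\bu}_{L^2L^2}.
\]

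\textbf{Step 2: energy identity.} Test \eqref{eqn: cn_erreqn1} with $\bv=\bar\be_h^{m+1/2}$ and \eqref{eqn: cn_erreqn2} with $q=\bar\epsilon_h^{m+1/2}$, then substitute. The pressure contribution becomes $\bb(\bar{\bm\chi}_h^{m+1/2},\bar\epsilon_h^{m+1/2})$. By the linearity of $\Pi_h$, $\bar{\bm\chi}_h^{m+1/2}=(\bm\chi_h^{m+1}+\bm\chi_h^m)/2$, so \eqref{eqn: supp_conti2} bounds this term by $Ch(\norm{\bu^{m+1}}_2+\norm{\bu^m}_2)\norm{\bar\epsilon_h^{m+1/2}}_0$. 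This is the origin of the extra $\tau\norm{\bu^0}_2^2$ in the statement: the $m=0$ term involves $\bu^0$, which the $\ell^2$ norm (summing from index $1$) omits.

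Multiply by $2\tau$ and use the coercivity \eqref{eqn: ba_coer}. Bound the consistency terms with \eqref{eqn: ur_suppest1}, \eqref{eqn: ur_suppest2} and \eqref{eqn: ur_suppest4}, noting that $\norm{\cR\bar\be_h^{m+1/2}}_0\le\frac12(\norm{\cR\be_h^{m+1}}_0+\norm{\cR\be_h^m}_0)$. Then insert the Step~1 bound for $\norm{\bar\epsilon_h^{m+1/2}}_0$.

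\textbf{Step 3: absorption and the main obstacle.} The dangerous term is $Ch^2(\norm{\bu^{m+1}}_2+\norm{\bu^m}_2)\norm{\cR(\be_h^{m+1}-\be_h^m)}_0$, which arises from the $h/\tau$ factor in Step~1 multiplied by $\tau$. Following the backward Euler proof, bound $\norm{\cR(\be_h^{m+1}-\be_h^m)}_0$ by $\norm{\cR\be_h^{m+1}}_0+\norm{\cR\be_h^m}_0$. Young's inequality then gives
\[
\frac{\tau}{\alpha}\big(\norm{\cR\be_h^{m+1}}_0^2+\norm{\cR\be_h^m}_0^2\big)+\frac{\alpha h^4}{\tau}\big(\norm{\bu^{m+1}}_2^2+\norm{\bu^m}_2^2\big),
\]
which is the source of the $h^4/\tau^2$ factor after summation. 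The $\mu h\norm{\cdot}_2\enorm{\bar\be_h^{m+1/2}}$ terms are absorbed into the coercive term, producing $\mu h^2\tau\norm{\cdot}_2^2$. The $\partial_t\bu$ and $\partial_{ttt}\bu$ terms produce $h^2\tau\norm{\partial_t\bu}^2_{L^\infty H^1}$ and $\tau^4\norm{\partial_{ttt}\bu}^2_{L^2(t_m,t_{m+1};L^2)}$ plus $\tau\norm{\cR\be_h}^2$ contributions. The cross products $h^3\tau$ and $h^2\tau^{5/2}$ coming from Step~1 are split as in the backward Euler proof, giving $h^4\tau\norm{\cdot}_2^2$ plus data terms.

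This yields
\[
(1-C_0\tau)\norm{\cR\be_h^{m+1}}_0^2+C\mu\tau\enorm{\bar\be_h^{m+1/2}}^2\le(1+C_1\tau)\norm{\cR\be_h^m}_0^2+\text{data}_m.
\]
Finally, sum over $m$ and apply the discrete Gr\"onwall inequality for $\tau$ sufficiently small. Here $\sum_m\big(\norm{\bu^{m+1}}_2^2+\norm{\bu^m}_2^2\big)\tau\le 2\big(\tau\norm{\bu^0}_2^2+\norm{\bu}^2_{\ell^2(t_0,t_n;H^2)}\big)$, and $\sum_m\tau\norm{\partial_t\bu}^2_{L^\infty(t_m,t_{m+1};H^1)}\le (t_n-t_0)\norm{\partial_t\bu}^2_{L^\infty(t_0,t_n;H^1)}$, which gives the stated estimate.

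The delicate part is verifying that the $h/\tau$ pressure coupling still closes at second order in time. It does, because it contributes only to the $h^4/\tau^2$ mixed term, while the purely temporal contribution is governed by $\tau^{3/2}$ in Lemma~\ref{lemma: cn_time_estimate} and therefore yields $\tau^4$.
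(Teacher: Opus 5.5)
Your proposal is correct relative to the estimates the paper supplies, and it follows the paper's proof essentially step for step. The shared steps are: the inf-sup pressure bound \eqref{eqn: cn_intermediate_bound_1}; testing with $\bar{\be}_h^{m+1/2}$ so that the reconstructed time-derivative term telescopes; inserting the pressure bound and splitting the $h^2\norm{\cR(\be_h^{m+1}-\be_h^m)}_0$ term by Young's inequality to get $h^4/\tau^2$; a discrete Gr\"onwall argument; and the extra $\tau\norm{\bu^0}_2^2$ coming from the time average. Your splitting of $\bar{\bm{\chi}}_h^{m+1/2}$ into two time levels is only a cosmetic variant of applying \eqref{eqn: supp_conti2} to $\bar{\bu}^{m+1/2}$ and using the triangle inequality at the end.

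Be aware of one caveat, shared with the paper. The factor $h/\tau$ rather than $1/\tau$ in your Step~1 comes from \eqref{eqn: norm_bound_RtoE}, which cannot hold as stated on $\Ch$: there $\cR\bv=\bv$ and $\enorm{\bv}=\norm{\nabla\bv}_0$, so it would force $\norm{\bv}_0\le Ch\norm{\nabla\bv}_0$, which is false for the interpolant of a fixed smooth field as $h\to0$. With the valid bound $\norm{\cR\bv}_0\le C\enorm{\bv}$, the same argument only yields $h^2/\tau^2$ in place of $h^4/\tau^2$, so this step is only as sound as the paper's own argument.
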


\begin{proof}
    Following the same argument as in the proof of Theorem~\ref{thm: euler_error_estimate}, we rewrite the pressure term from \eqref{eqn: cn_erreqn1} as
    \begin{align*}
        \bb(\bv,\bar{\epsilon}_h^{m+1/2})&= 
        \frac{1}{\tau}
(\mathcal R(\be_h^{m+1}-\be_h^m),\mathcal R\bv)
+\mu\ba(\bar{\be}_h^{m+1/2},\bv)-
l_1(\bar{\bu}^{m+1/2},\bv)
-l_2(\bar{\bu}^{m+1/2},\bv)
+l^{m}_{\mathtt{CN}}(\bu,\bv).
    \end{align*}
    Applying the Cauchy-Schwarz inequality, the continuity estimate \eqref{eqn: ba_conti}, the supplemental estimates
\eqref{eqn: ur_suppest1}, \eqref{eqn: ur_suppest2}, and \eqref{eqn: ur_suppest4}, together with the discrete
inf-sup condition \eqref{eqn: inf-sup} and the norm bound
\eqref{eqn: norm_bound_RtoE}, we obtain
    \begin{align*}
        \enorm{\bv}\norm{\bar{\epsilon}_h^{m+1/2}}_0&\leq C\frac{h}{\tau}\norm{\mathcal{R}(\mathbf{e}_h^{m+1}-\mathbf{e}_h^m)}_0\enorm{\mathbf{v}}+\mu\kappa_2\enorm{\bar{\mathbf{e}}_h^{m+1/2}}\enorm{\mathbf{v}}+C\mu h\norm{\bar{\bu}^{m+1/2}}_2\enorm{\bv} \\
        &\qquad\quad +Ch^2\norm{\partial_t\bu}_{L^\infty(t_m,t_{m+1};[H^1(\Omega)]^d)}\enorm{\bv}+ Ch\tau^{3/2}\norm{\partial_{ttt}\bu}_{L^2(t_m,t_{m+1};[L^2(\Omega)]^d)}\enorm{\bv}.
    \end{align*}
   Dividing by \(\enorm{\bv}\), we arrive at
    \begin{align}
        \norm{\bar{\epsilon}_h^{m+1/2}}_0&\leq C\frac{h}{\tau}\norm{\mathcal{R}(\mathbf{e}_h^{m+1}-\mathbf{e}_h^m)}_0+C\mu\kappa_2\enorm{\bar{\mathbf{e}}_h^{m+1/2}}+ C\mu h\norm{\bar{\bu}^{m+1/2}}_2\nonumber\\
        &\qquad\quad+Ch^2\norm{\partial_t\bu}_{L^\infty(t_m,t_{m+1};[H^1(\Omega)]^d)}+ Ch\tau^{3/2}\norm{\partial_{ttt}\bu}_{L^2(t_m,t_{m+1};[L^2(\Omega)]^d)}.\label{eqn: cn_intermediate_bound_1}
    \end{align}
    Taking
\(\bv=\bar{\be}_h^{m+1/2}\)
and
\(q=\bar{\epsilon}_h^{m+1/2}\)
in \eqref{sys: cn_erreqn} implies
    \begin{align*}
        \frac{1}{\tau}(\mathcal{R}(\mathbf{e}_h^{m+1}-\mathbf{e}_h^m),\mathcal{R}\bar{\mathbf{e}}_h^{m+1/2})+&\mu\mathbf{a}(\bar{\mathbf{e}}_h^{m+1/2},\bar{\mathbf{e}}_h^{m+1/2}) + \mathbf{b}(\bar{\bm{\chi}}_h^{m+1/2},\bar{\epsilon}_h^{m+1/2})\\
&=l_1(\bar{\mathbf{u}}^{m+1/2},\bar{\mathbf{e}}_h^{m+1/2})+l_2(\bar{\mathbf{u}}^{m+1/2},\bar{\mathbf{e}}_h^{m+1/2}) - l^{m}_{\mathtt{CN}}(\mathbf{u},\bar{\mathbf{e}}_h^{m+1/2}).
    \end{align*}
Furthermore, applying \eqref{eqn: supp_conti2} gives 
    \begin{equation}
        \mathbf{b}(\bar{\bm{\chi}}_h^{m+1/2},\bar{\epsilon}_h^{m+1/2})\leq Ch\norm{\bar{\epsilon}_h^{m+1/2}}_0\norm{\bar{\bu}^{m+1/2}}_2.\label{eqn: cn_intermediate_bound_2}
        \end{equation}
Hence, using the identity
\[
(\mathcal R(\be_h^{m+1}-\be_h^m),\mathcal R\bar{\be}_h^{m+1/2})
=
\frac12
\bigl(
\|\mathcal R\be_h^{m+1}\|_0^2
-
\|\mathcal R\be_h^m\|_0^2
\bigr),
\]
together with the coercivity estimate \eqref{eqn: ba_coer},
the bounds \eqref{eqn: ur_suppest1}, \eqref{eqn: ur_suppest2}, \eqref{eqn: ur_suppest4},
and \eqref{eqn: cn_intermediate_bound_2}, we arrive at
    \begin{align*}
        \norm{\cR\be_h^{m+1}}_0^2+2\mu\kappa_1\tau\enorm{\bar{\be}_h^{m+1/2}}^2&\leq \norm{\cR\be_h^{m}}_0^2  + Ch\tau\norm{\bar{\epsilon}_h^{m+1/2}}_0\norm{\bar{\bu}^{m+1/2}}_2\\
        &\qquad\quad+C\mu h\tau\norm{\bar{\bu}^{m+1/2}}_2\enorm{\bar{\be}_h^{m+1/2}}\\
        &\qquad\quad+Ch\tau\norm{\partial_t\bu}_{L^\infty(t_m,t_{m+1};[H^1(\Omega)]^d)}\norm{\cR\bar{\be}_h^{m+1/2}}_0\\
        &\qquad\quad+ C\tau^{5/2}\norm{\partial_{ttt}\bu}_{L^2(t_m,t_{m+1};[L^2(\Omega)]^d)}\norm{\cR\bar{\be}_h^{m+1/2}}_0.
    \end{align*}
    Substituting \eqref{eqn: cn_intermediate_bound_1} and applying Young's inequality in the same manner as in the proof of
Theorem~\ref{thm: euler_error_estimate}, in particular using
\begin{align*}
    h\tau\norm{\partial_t\bu}_{L^\infty(t_m,t_{m+1};[H^1(\Omega)]^d)}&\norm{\cR\bar{\be}_h^{m+1/2}}_0
    \\
    &\leq \frac{\tau}{4\alpha}(\norm{\cR{\be}_h^{m+1}}_0^2+\norm{\cR{\be}_h^{m}}_0^2)+\frac{\alpha h^2\tau}{2}\norm{\partial_t\bu}_{L^\infty(t_m,t_{m+1};[H^1(\Omega)]^d)}^2,
\end{align*}
and
\begin{align*}
    \tau^{5/2}\norm{\partial_{ttt}\bu}_{L^2(t_m,t_{m+1};[L^2(\Omega)]^d)}&\norm{\cR\bar{\be}_h^{m+1/2}}_0
    \\
    &\leq \frac{\tau}{4\alpha}(\norm{\cR{\be}_h^{m+1}}_0^2+\norm{\cR{\be}_h^{m}}_0^2)+\frac{\alpha \tau^4}{2}\norm{\partial_{ttt}\bu}_{L^2(t_m,t_{m+1};[L^2(\Omega)]^d)}^2,
\end{align*}    
we obtain
    \begin{align*}
        (1-C_2\tau)\norm{\cR\be_h^{m+1}}_0^2 + C\mu\tau\enorm{\bar{\be}_h^{m+1/2}}^2&\leq (1+C_3\tau)\norm{\cR\be_h^{m}}_0^2 +C\frac{h^4}{\tau}\norm{\bar{\bu}^{m+1/2}}_2^2\\
        &\qquad\quad+C\mu h^2\tau\norm{\bar{\bu}^{m+1/2}}_2^2 + Ch^4\tau\norm{\bar{\bu}^{m+1/2}}_2^2 \\
        &\qquad\quad + Ch^2\tau\norm{\partial_t\bu}_{L^\infty(t_m,t_{m+1};[H^1(\Omega)]^d)}^2\\
        &\qquad\quad + C\tau^4\norm{\partial_{ttt}\bu}_{L^2(t_m,t_{m+1};[L^2(\Omega)]^d)}^2.
    \end{align*}
    Therefore, a discrete Gronwall inequality yields
    \begin{align*}
    \|\mathcal{R}\mathbf{e}_h^n\|_0^2 + C\mu \left(\tau\sum_{m=0}^{n-1}\|\bar{\mathbf{e}}_h^{m+1/2}\|_\mathcal{E}^2\right)&\leq C^\ddagger\|\mathcal{R}\mathbf{e}_h^0\|_0^2 + C^\ddagger C\frac{h^4}{\tau^2}\left(\tau\sum_{m=0}^{n-1}\|\bar{\mathbf{u}}^{m+1/2}\|_2^2\right)\\
    &\qquad\quad+C^\ddagger C(\mu h^2+h^4)\left(\tau\sum_{m=0}^{n-1}\|\bar{\mathbf{u}}^{m+1/2}\|_2^2\right)\\
    &\qquad\quad+C^\ddagger Ch^2\left((t_n-t_0)\norm{\partial_t\bu}_{L^\infty(t_0,t_n;[H^1(\Omega)]^d)}^2\right)\\
    &\qquad\quad+C^\ddagger C\tau^4\left(\norm{\partial_{ttt}\bu}_{L^2(t_0,t_{n};[L^2(\Omega)]^d)}^2\right),
\end{align*}
where $C^\ddagger>0$ is a constant satisfying
\[
\left(\frac{1+C_3\tau}{1-C_2\tau}\right)^n\leq C^\ddagger,
\]
and is independent of $h$, $\tau$, and $n$.
Finally, absorbing $C^\ddagger$ and $t_n-t_0$ into the generic $C>0$ and applying the triangle inequality for $\bar{\bu}^{m+1/2} = (\bu^{m+1}+\bu^m)/2$, we obtain the desired estimate.
\end{proof}

\begin{remark}
The estimate in Theorem~\ref{thm:cn_error_estimate} contains the same
mixed space-time contribution \(h^4/\tau^2\) as in the backward Euler
case, corresponding to \(h^2/\tau\) at the level of the velocity error.
However, the Crank-Nicolson temporal consistency error is of order
\(O(\tau^2)\), rather than \(O(\tau)\).

To balance the errors arising from the spatial discretization and the
Crank-Nicolson time stepping, assume that
\(\mathbf e_h^0=\mathbf 0\) and choose
\(\tau\simeq h^{1/2}\). Then,
\(
h^2/\tau=O(h^{3/2})\) and \(\tau^2=O(h)\),
and Theorem~\ref{thm:cn_error_estimate} yields
\begin{equation}
    \|\cR\be_h^n\|_0
    \le
    C(h+h^{3/2}+h^2+\sqrt{\mu}h).
    \label{eqn: error_bound_CN_velocity}
\end{equation}
Thus, the second-order temporal accuracy of the Crank-Nicolson method
allows the larger time-step scaling \(\tau\simeq h^{1/2}\) while
preserving an asymptotically first-order convergence rate for the
reconstructed velocity. This behavior is verified numerically in
Section~\ref{sec:numerical_experiments}.

Moreover, as in the backward Euler case, the estimate contains no
pressure-dependent terms or inverse powers of the viscosity parameter
\(\mu\). Hence, the pressure-robust property of the proposed formulation
is retained without deterioration as \(\mu\) becomes small.
\end{remark}

\begin{theorem}\label{thm: cn_pressure_error_estimate}
Assume the hypotheses of Theorem~\ref{thm:cn_error_estimate}.
In addition, suppose that \(\mathbf e_h^0=\mathbf{0}\) and that the time step
satisfies \(\tau\simeq h^{1/2}\). Then, for any integer \(1\le n\le N\),
the pressure error satisfies
\begin{equation*}
\norm{\bar\epsilon_h}_{\ell^2(t_0,t_n;L^2(\Omega))}
\le
C(
h^{3/2}
+
h^{2}
+
h^{5/2}
+
\sqrt{\mu}h
+
\mu h
+
\sqrt{\mu} h^{3/2}
+\sqrt{\mu}h^2
).
\end{equation*}
\end{theorem}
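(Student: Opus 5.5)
Following the proof of Theorem~\ref{thm: euler_pressure_error_estimate}, I would start from the pointwise-in-time pressure bound \eqref{eqn: cn_intermediate_bound_1}, square it, multiply by \(\tau\), and sum over \(m=0,\dots,n-1\). This gives
\[
\norm{\bar\epsilon_h}_{\ell^2(t_0,t_n;L^2(\Omega))}^2
\le C\frac{h^2}{\tau}\sum_{m=0}^{n-1}\norm{\cR(\be_h^{m+1}-\be_h^m)}_0^2
+C\mu^2\Big(\tau\sum_{m=0}^{n-1}\enorm{\bar\be_h^{m+1/2}}^2\Big)
+C\mu^2h^2\Big(\tau\sum_{m=0}^{n-1}\norm{\bar\bu^{m+1/2}}_2^2\Big)
+Ch^4+Ch^2\tau^4,
\]
where the \(h^4\) term comes from \(\tau\sum_m h^4\norm{\partial_t\bu}^2_{L^\infty}\le C h^4\). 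The \(h^2\tau^4\) term uses \(\tau\cdot \tau^3\) summed against \(\norm{\partial_{ttt}\bu}^2_{L^2(t_m,t_{m+1})}\). The third term is \(O(\mu^2h^2)\) by regularity.

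The key step is the first term. I would argue exactly as in the backward Euler case. By the velocity bound \eqref{eqn: error_bound_CN_velocity}, which comes from Theorem~\ref{thm:cn_error_estimate} with \(\be_h^0=\mathbf 0\) and \(\tau\simeq h^{1/2}\), each \(\norm{\cR\be_h^m}_0^2\le C(h^2+h^3+h^4+\mu h^2)\). Together with \(n\le C/\tau\), this yields
\[
\sum_{m=0}^{n-1}\norm{\cR(\be_h^{m+1}-\be_h^m)}_0^2\le \frac{C}{\tau}(h^2+h^3+h^4+\mu h^2).
\]
Multiplying by \(h^2/\tau\), and using \(\tau^2\simeq h\), gives \(h^2/\tau^2\simeq h\). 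The first term is therefore bounded by \(C(h^3+h^4+h^5+\mu h^3)\). I expect this to be the main obstacle. The crude triangle-inequality estimate of the increments loses a full factor of \(\tau^{-1}\). The coupling \(\tau\simeq h^{1/2}\) then makes the first term contribute \(h^{3/2}\) and \(\sqrt{\mu}h^{3/2}\) rather than \(h\), which is consistent with the stated bound. For this reason I would not attempt a sharper bound on the discrete time derivative.

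For the second term, Theorem~\ref{thm:cn_error_estimate} gives \(\mu\,\tau\sum_m\enorm{\bar\be_h^{m+1/2}}^2\le C(h^2+h^3+h^4+\mu h^2)\). Here \(h^4/\tau^2\simeq h^3\), \(\tau^4\simeq h^2\), the \(h^2\norm{\partial_t\bu}^2\) term is \(h^2\), and \(\tau\norm{\bu^0}_2^2\) is absorbed. Hence the second term is at most \(C\mu(h^2+h^3+h^4+\mu h^2)\).

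Collecting all contributions gives a sum of the form \(C(h^3+h^4+h^5+\mu h^2+\mu^2h^2+\mu h^3+\mu h^4+h^2\tau^4)\). Two of these need checking against the stated bound. The term \(h^2\tau^4\simeq h^4\) is harmless. The term \(\mu h^4\) is covered by \(\sqrt{\mu}h^2\). Taking square roots and using \(\sqrt{a+b}\le\sqrt a+\sqrt b\) gives the claimed combination \(h^{3/2}+h^2+h^{5/2}+\sqrt\mu h+\mu h+\sqrt\mu h^{3/2}+\sqrt\mu h^2\). The bookkeeping in this step, verifying that every term is dominated by one of these, is routine but needs care.
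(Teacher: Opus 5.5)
Your proposal is correct and follows the paper's proof essentially step for step. You square and sum \eqref{eqn: cn_intermediate_bound_1}, bound the increment sum crudely using the pointwise bound \eqref{eqn: error_bound_CN_velocity} together with $n\le C/\tau$ (so $h^2/\tau^2\simeq h$), bound the energy sum using Theorem~\ref{thm:cn_error_estimate}, and take square roots; unlike the paper, which drops or silently absorbs them, you keep the $Ch^4$ contribution from the $\partial_t\bu$ term and the $h^2\tau^4$ term explicitly, and both are harmlessly dominated by $h^2$.
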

\begin{proof}
The proof is analogous to that of
Theorem~\ref{thm: euler_pressure_error_estimate}; we only indicate the
main changes. From \eqref{eqn: cn_intermediate_bound_1},
squaring, multiplying by \(\tau\), and summing over
\(m=0,\ldots,n-1\), we obtain
\begin{align*}
\tau\sum_{m=0}^{n-1}
\norm{\bar\epsilon_h^{m+1/2}}_0^2
&\le
C\frac{h^2}{\tau}
\sum_{m=0}^{n-1}
\norm{\mathcal R(\be_h^{m+1}-\be_h^m)}_0^2
+
C\mu^2
\left(
\tau\sum_{m=0}^{n-1}
\enorm{\bar\be_h^{m+1/2}}^2
\right) \\
&\qquad\quad
+
C\mu^2h^2
\norm{\bu}_{\ell^2(t_0,t_n;[H^2(\Omega)]^d)}^2
+
Ch^2\tau^4
\norm{\partial_{ttt}\bu}_{L^2(t_0,t_n;[L^2(\Omega)]^d)}^2 .
\end{align*}
From \eqref{eqn: error_bound_CN_velocity} and using $\tau\simeq h^{1/2}$, we have
\[
C\frac{h^2}{\tau}
\sum_{m=0}^{n-1}
\norm{\mathcal R(\be_h^{m+1}-\be_h^m)}_0^2\leq C\frac{h^2}{\tau^2}(h^2 + h^{3} + h^4 + \mu h^2)\leq C(h^3 + h^4 + h^5 +\mu h^3)
\]
and
\[
C\mu^2
\left(
\tau\sum_{m=0}^{n-1}
\enorm{\bar\be_h^{m+1/2}}^2
\right)
\le
C\mu(h^2 + h^{3} + h^4 + \mu h^2),
\]
while the remaining two terms carry over unchanged. Therefore,
\[
\tau\sum_{m=0}^{n-1}
\norm{\bar\epsilon_h^{m+1/2}}_0^2
\le
C(
h^3
+
 h^4
+
h^5
+\mu h^2
+
\mu^2h^2
+
\mu h^3
+
\mu h^4
).
\]
Taking the square root gives
the desired estimate.
\end{proof}

\begin{remark}
The estimate in Theorem~\ref{thm: cn_pressure_error_estimate} contains
no inverse powers of the viscosity parameter \(\mu\). Hence, the pressure
error estimate does not deteriorate as \(\mu\) becomes small.
For \(\mu=\mathcal O(1)\), the terms \(\sqrt{\mu}h\) and \(\mu h\)
yield a first-order pressure error bound in \(h\). For small values of
\(\mu\), these viscosity-dependent terms are reduced, so
non-viscous contribution \(h^{3/2}\) may become dominant. This explains
the higher observed convergence order reported in
Section~\ref{sec:numerical_experiments}.
\end{remark}

\section{Numerical experiments}
\label{sec:numerical_experiments}

This section presents a series of numerical experiments to assess the performance of the proposed fully pressure-robust schemes. We first consider two-dimensional test problems to examine the role of the velocity reconstruction, verify the predicted convergence behavior and viscosity robustness, and compare the temporal accuracy of the backward Euler and Crank-Nicolson discretizations. We then turn to three-dimensional examples to demonstrate the mass-conservation property of the reconstructed velocity and the applicability of the method to a more practical flow problem.

\subsection{Two-dimensional results}

We begin with a series of two-dimensional numerical experiments designed to examine the role of the velocity reconstruction in the fully discrete formulation. We first compare the proposed fully pressure-robust scheme with standard and partially reconstructed formulations in the small-viscosity regime. This comparison demonstrates that reconstructing only the forcing term is insufficient to prevent deterioration of the velocity approximation and that reconstruction must also be incorporated into the time-derivative term. We then verify the convergence behavior and viscosity-robustness of the proposed backward Euler and Crank-Nicolson schemes and investigate the higher temporal accuracy of the Crank-Nicolson discretization.

\subsubsection{Comparison with standard schemes}
We first compare the proposed fully pressure-robust backward Euler scheme with two standard formulations to examine the role of the reconstruction operator in the fully discrete formulation.
The computational domain is $\Omega=(0,1)^2$, and the time interval is $I=(0,1]$. The exact velocity and pressure are prescribed as
\[
\bu(x,y,t) = \begin{pmatrix}
    \sin(\pi x)\cos(\pi y)\sin(\pi t/2)\\
    -\cos(\pi x)\sin(\pi y)\sin(\pi t/2)
\end{pmatrix},\qquad p(x,y,t) =  \left(\sin(\pi x)\sin(\pi y) - \frac{4}{\pi^2}\right)\sin(\pi t/2).
\]
The velocity is divergence-free, and the constant $4/\pi^2$ is introduced so that the pressure has zero mean over $\Omega$. The forcing term and the boundary and initial data are chosen consistently with the exact solution.

To isolate the effect of the reconstruction in the fully discrete formulation, we compare the following three backward Euler schemes:
\begin{itemize}
    \item \texttt{ST-BE}: the standard backward Euler scheme, in which the reconstruction operator is not applied;
    \item \texttt{SR-BE}: the source-reconstructed backward Euler scheme proposed in \cite{lv2024pressure}, in which the reconstruction is applied only to the forcing term $(\mathbf{f}^{m+1},\cR\bv)_\Omega$;
    \item \texttt{FPR-BE}: the fully pressure-robust backward Euler scheme, in which the reconstruction is applied to both the forcing term and the time-derivative term, as defined in Algorithm~\ref{alg: BE-FPR}.
\end{itemize}
For all three schemes, we choose the small viscosity $\mu=10^{-10}$ and couple the time-step size to the mesh size by setting $\tau=h$.

Table~\ref{table: comparison_st_pr_fpr} reports both the standard $L^2$-velocity error and the reconstructed velocity error, together with the pressure error. The \texttt{ST-BE} scheme exhibits approximately one-half-order convergence for both velocity errors. Applying the reconstruction only to the forcing term substantially reduces the magnitude of these errors, but the \texttt{SR-BE} scheme still converges at approximately the same one-half-order rate. Thus, source-term reconstruction alone reduces the velocity errors but does not recover the expected first-order convergence.
\begin{table}[!ht]
    \centering
    \begin{tabular}{|c|c||c|c|c|c|c|c|}
    \hline
        \multicolumn{8}{|c|}{\texttt{ST-BE} scheme ($\mu=10^{-10}$)} \\
    \hline   
      $h$ & $\tau$  & {\small $ \norm{\mathbf{u}^N-\mathbf{u}_h^N}_0$} & {\small $h$-order} & {\small $ \norm{\cR(\Pi_h\mathbf{u}^N-\mathbf{u}_h^N)}_0$} & {\small $h$-order} & {\small $\norm{\Pz p-p_h}_{\ell^2(0,1;L^2)}$} & {\small $h$-order}\\ 
      \hline
      $1/8$ & 1/8& 4.193e-1 & -  & 4.170e-1 & -  & 1.614e-2 & -\\
      \hline
      $1/16$ & 1/16& 2.865e-1 &  0.55 & 2.860e-1 &  0.54 & 6.642e-3 & 1.28\\
      \hline
      $1/32$ & 1/32& 1.965e-1 &  0.54 & 1.964e-1 & 0.54  & 3.143e-3 & 1.08\\
      \hline
      $1/64$ & 1/64& 1.363e-1 &  0.53 & 1.363e-1 & 0.53  & 1.635e-3 & 0.94 \\
      \hline
      $1/128$ & 1/128& 9.535e-2 &  0.52 & 9.534e-2 &  0.52 & 8.645e-4 & 0.92\\
    \hline
    \hline
          \multicolumn{8}{|c|}{\texttt{SR-BE} scheme ($\mu=10^{-10}$)} \\
    \hline   
      $h$ & $\tau$  & {\small $ \norm{\mathbf{u}^N-\mathbf{u}_h^N}_0$} & {\small $h$-order} & {\small $ \norm{\cR(\Pi_h\mathbf{u}^N-\mathbf{u}_h^N)}_0$} & {\small $h$-order} & {\small $\norm{\Pz p-p_h}_{\ell^2(0,1;L^2)}$} & {\small $h$-order}\\ 
      \hline
      1/8 & 1/8& 1.118e-1 & -  & 1.034e-1 & -  & 3.658e-3 & -\\
      \hline
      $1/16$ & 1/16& 7.347e-2 & 0.61 & 7.169e-2 & 0.53  & 1.195e-3 & 1.61\\
      \hline
      $1/32$ & 1/32& 4.969e-2 &  0.56 & 4.934e-2 &  0.54 & 4.012e-4 & 1.57\\
      \hline
      $1/64$ & 1/64& 3.426e-2 & 0.54 & 3.419e-2 &  0.53 & 1.385e-4 & 1.53\\
      \hline
      $1/128$ & 1/128& 2.390e-2 & 0.52 & 2.389e-2 &  0.52 & 4.893e-5 & 1.50\\
      \hline
      \hline
         \multicolumn{8}{|c|}{\texttt{FPR-BE} scheme ($\mu=10^{-10}$)} \\
    \hline   
      $h$ & $\tau$  & {\small $ \norm{\mathbf{u}^N-\mathbf{u}_h^N}_0$} & {\small $h$-order} & {\small $ \norm{\cR(\Pi_h\mathbf{u}^N-\mathbf{u}_h^N)}_0$} & {\small $h$-order} & {\small $\norm{\Pz p-p_h}_{\ell^2(0,1;L^2)}$} & {\small $h$-order}\\  
      \hline
      $1/8$ & 1/8& 6.719e-2 & -  & 5.027e-2 & -  & 5.029e-4 & -\\
      \hline
      $1/16$ & 1/16& 3.405e-2 & 0.98 & 2.968e-2 &  0.76 & 1.101e-4 & 2.19\\
      \hline
      $1/32$ & 1/32& 1.719e-2 & 0.99  & 1.607e-2 & 0.89  & 2.577e-5 & 2.10\\
      \hline
      $1/64$ & 1/64& 8.635e-3 & 0.99  & 8.354e-3 &  0.94 & 6.250e-6 & 2.04\\
      \hline
      $1/128$ & 1/128& 4.328e-3 &  1.00 & 4.257e-3 & 0.97  & 1.543e-6 & 2.02\\
      \hline
    \end{tabular}
    \caption{Errors and observed convergence orders with respect to $h$ for the \texttt{ST-BE}, \texttt{SR-BE}, and \texttt{FPR-BE} schemes with $\mu=10^{-10}$ and $\tau=h$.}
    \label{table: comparison_st_pr_fpr}
\end{table}
In contrast, the \texttt{FPR-BE} scheme exhibits approximately first-order convergence for both velocity errors, as predicted by the estimate in \eqref{eqn: error_bound_BE_velocity}. These results show that reconstructing the time-derivative term is essential for obtaining a fully pressure-robust discretization of the time-dependent problem. In particular, the comparison between \texttt{SR-BE} and \texttt{FPR-BE} demonstrates that the pressure-robust treatment developed for the stationary problem \cite{hu2024pressure} cannot be extended directly to the time-dependent setting by modifying the forcing term alone. The divergence-free structure must also be incorporated into the discrete temporal evolution.

The pressure errors also exhibit progressively improved convergence behavior across the three schemes. The \texttt{ST-BE} scheme shows approximately first-order convergence, while the \texttt{SR-BE} scheme yields a higher convergence rate. In contrast, the \texttt{FPR-BE} scheme achieves approximately second-order convergence for the pressure error.
These results motivate the use of the \texttt{FPR-BE} and \texttt{FPR-CN} schemes in the remainder of the two-dimensional experiments.

\subsubsection{Convergence and viscosity robustness}\label{subsubsec: pressure_robust}

We now verify the convergence behavior and viscosity robustness of the proposed \texttt{FPR-BE} and \texttt{FPR-CN} schemes using a second manufactured solution.
The computational domain is $\Omega=(0,1)^2$, and the time interval is $I=(0,1]$. The exact velocity and pressure are given by
\[
\bu(x,y,t) = \begin{pmatrix}
    10x^2y(x-1)^2(2y-1)(y-1)t^2\\
    -10xy^2(2x-1)(x-1)(y-1)^2t^2
\end{pmatrix},\qquad p(x,y,t) = 10(2x-1)(2y-1)t^2.
\]
The forcing term and the boundary and initial data are chosen consistently with the exact solution.

We first consider the \texttt{FPR-BE} scheme with the time-step size $\tau=h$. From the velocity and pressure error estimates established in Section~\ref{sec:full_analysis}, we obtain
\begin{subequations}\label{sys: error_estimate_BE}
\begin{alignat}{2}
&\norm{\cR(\Pi_h\bu^n-\bu_h^n)}_0\leq C(h + \sqrt{\mu}h) +\mathcal{O}(h^2),\label{eqn: error_estimate_BE_velo}\\
&\norm{\Pz p - p_h}_{\ell^2(t_0,t_n;L^2(\Omega))}\leq C(h + \sqrt{\mu}h)+ \mathcal{O}(h^2).\label{eqn: error_estimate_BE_pres}
\end{alignat}
\end{subequations}
Thus, both the reconstructed velocity error and the pressure error are expected to converge at least at first order with respect to \(h\), independently of inverse powers of the viscosity.

\begin{table}[!ht]
    \centering
    \begin{tabular}{|c|c||c|c|c|c|}
    \hline
         \multicolumn{6}{|c|}{\texttt{FPR-BE} scheme ($\mu = 1$)
        }  \\
    \hline 
     
      $h$ & $\tau$  & {\small $\norm{\mathcal{R}(\Pi_h\bu^N-\bu_h^N)}_0$}  & {$h$-order} & {\small $\norm{\Pz p-p_h}_{\ell^2(0,1;L^2)}$} & {$h$-order}  \\ 
      \hline
      $1/4$ & 1/4& 5.383e-3 & -  &3.719e-2 & - \\
      \hline
      $1/8$ & 1/8& 1.367e-3 & 1.98  &1.915e-2 & 0.96  \\
      \hline
      $1/16$ & 1/16& 3.371e-4 & 2.02  &8.308e-3 & 1.20\\
      \hline
      $1/32$ & 1/32& 8.676e-5 & 1.96  &3.175e-3 & 1.39 \\
      \hline
      $1/64$ & 1/64& 2.371e-5 & 1.87  &1.314e-3 & 1.27 \\
      \hline
      \hline
      \multicolumn{6}{|c|}{\texttt{FPR-BE} scheme
        ($\mu = 10^{-6}$)}  \\
    
    \hline
     
      $h$ & $\tau$  & {\small $\norm{\mathcal{R}(\Pi_h\bu^N-\bu_h^N)}_0$}  & {$h$-order} & {\small $\norm{\Pz p-p_h}_{\ell^2(0,1;L^2)}$} & {$h$-order}  \\ 
      
      \hline
      $1/4$ & 1/4& 1.900e-2 & -  &5.339e-4 & - \\
      \hline
      $1/8$ & 1/8& 7.665e-3 & 1.31  &7.586e-5 & 2.82  \\
      \hline
      $1/16$ & 1/16& 3.141e-3 & 1.29  &9.300e-6 & 3.03\\
      \hline
      $1/32$ & 1/32& 1.391e-3 & 1.18  &1.043e-6 & 3.16 \\
      \hline
      $1/64$ & 1/64& 6.509e-4 & 1.10  &1.220e-7 & 3.10 \\
      \hline
    \end{tabular}
    \caption{Errors and observed convergence orders with respect to \(h\) for the \texttt{FPR-BE} scheme with \(\tau=h\). Results are shown for \(\mu=1\) and \(\mu=10^{-6}\).}
    \label{table: FPR-BE_test}
\end{table}
Table~\ref{table: FPR-BE_test} reports the errors and observed convergence orders for \(\mu=1\) and \(\mu=10^{-6}\).
For \(\mu=1\), both the reconstructed velocity and pressure errors converge faster than the first-order behavior guaranteed by the estimates in \eqref{sys: error_estimate_BE}.
For $\mu = 10^{-6}$, the reconstructed velocity error converges at approximately first order, in agreement with \eqref{eqn: error_estimate_BE_velo}.
The pressure error exhibits a higher observed convergence rate because the contribution of \(\sqrt{\mu}h\) in \eqref{eqn: error_estimate_BE_pres} becomes negligible and higher-order terms may dominate over the tested meshes.

We next consider the \texttt{FPR-CN} scheme with $\tau=h^{1/2}$. From the corresponding estimates in Section~\ref{sec:full_analysis}, we obtain
\begin{subequations}\label{sys: error_estimate_CN}
\begin{alignat}{2}
&\norm{\cR(\Pi_h\bu^n-\bu_h^n)}_0\leq C(h + \sqrt{\mu}h) +\mathcal{O}(h^{3/2}),\label{eqn: error_estimate_CN_velo}\\
&\norm{\Pz \bar{p} - p_h}_{\ell^2(t_0,t_n;L^2(\Omega))}\leq C(h^{3/2}+\sqrt{\mu}h) + \mathcal{O}(h^2).\label{eqn: error_estimate_CN_pres}
\end{alignat}
\end{subequations}
Therefore, the reconstructed velocity error is again expected to converge at least at first order. For the pressure error, the dominant rate depends on the viscosity. When $\mu$ is of order one, the term $\sqrt{\mu}h$ gives a first-order contribution. When $\mu$ is small, this term becomes negligible, and the higher-order terms may dominate.

\begin{table}[!ht]
    \centering
    \begin{tabular}{|c|c||c|c|c|c|}
    \hline
         \multicolumn{6}{|c|}{\texttt{FPR-CN} scheme ($\mu=1$)
        }  \\
    \hline 
     
      $h$ & $\tau$  & {\small $\| \mathcal{R}(\Pi_h\bu^N-\bu^N_h)\|_0$}  & {$h$-order} & {\small $\norm{\Pz \bar{p}-p_h}_{\ell^2(0,1;L^2)}$} & {$h$-order}  \\ 
      \hline
      $1/4$ & 1/2& 5.390e-3 & -  &3.916e-2 & - \\
      \hline
      $1/16$ & 1/4& 3.333e-4 & 2.01  &6.326e-3 & 1.32\\
      \hline
      $1/64$ & 1/8& 2.152e-5 & 1.98  &1.243e-3 & 1.17 \\
      \hline
      $1/256$ & 1/16& 1.375e-6 &  1.98 &2.883e-4 & 1.05 \\
      \hline
      \hline
         \multicolumn{6}{|c|}{\texttt{FPR-CN} scheme ($\mu = 10^{-6}$)
        }  \\
    \hline 
     
      $h$ & $\tau$  & {\small $\| \mathcal{R}(\Pi_h\bu^N-\bu^N_h)\|_0$}  & {$h$-order} & {\small $\norm{\Pz \bar{p}-p_h}_{\ell^2(0,1;L^2)}$} & {$h$-order}  \\ 
      \hline
      $1/4$ & 1/2& 1.043e-2 & -  &3.141e-2 & - \\
      \hline
      $1/16$ & 1/4& 8.193e-4 & 1.84  &1.937e-3 & 2.01\\
      \hline
      $1/64$ & 1/8& 5.278e-5 & 1.98  &1.213e-4 & 2.00 \\
      \hline
      $1/256$ & 1/16& 3.320e-6 & 2.00  & 7.582e-6 & 2.00 \\
      \hline
    \end{tabular}
    \caption{Errors and observed convergence orders with respect to \(h\) for the \texttt{FPR-CN} scheme with \(\tau=h^{1/2}\). Results are shown for \(\mu=1\) and \(\mu=10^{-6}\).}
    \label{table: FPR-CN_test}
\end{table}
Table~\ref{table: FPR-CN_test} reports the reconstructed velocity and pressure errors for the \texttt{FPR-CN} scheme with $\tau=h^{1/2}$.
For \(\mu=1\), the reconstructed velocity error exhibits approximately second-order convergence, which is higher than the first-order rate guaranteed by the estimate in \eqref{eqn: error_estimate_CN_velo}.
The pressure error initially converges at a rate higher than one, with the observed order gradually approaching first order under mesh refinement.
This behavior is consistent with \eqref{eqn: error_estimate_CN_pres}, since the term $\sqrt{\mu}h$ remains significant when $\mu=1$.
For \(\mu=10^{-6}\), both the reconstructed velocity and pressure errors exhibit approximately second-order convergence.
In this small-viscosity regime, the contribution of the first-order term \(\sqrt{\mu}h\) becomes negligible, allowing the higher-order terms in \eqref{eqn: error_estimate_CN_velo} and \eqref{eqn: error_estimate_CN_pres} to dominate over the tested mesh range.

Finally, we examine the dependence of the numerical errors on the viscosity parameter $\mu$. The mesh size is fixed at $h=1/16$, while $\mu$ is varied from $10^{-2}$ to $10^{-10}$.
\begin{figure}[!ht]
\centering
\includegraphics[width=0.45\linewidth]{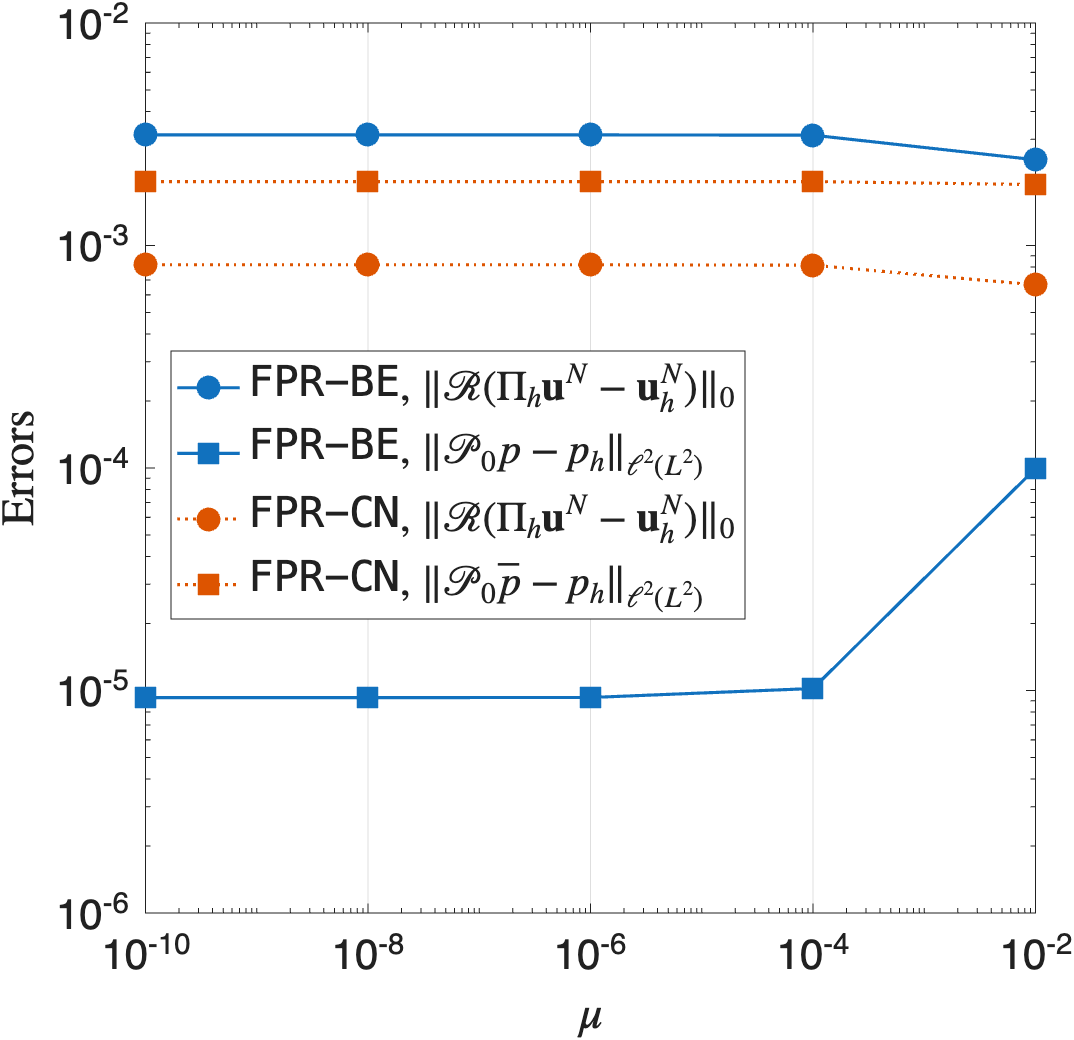}
    \caption{Errors of the \texttt{FPR-BE} and \texttt{FPR-CN} schemes as functions of the viscosity parameter $\mu$, with the mesh size fixed at $h=1/16$.}
    \label{figure: pr}
\end{figure}
As shown in Figure~\ref{figure: pr}, the reconstructed velocity and pressure errors of both schemes remain uniformly bounded and nearly unchanged as $\mu$ approaches zero. This behavior is consistent with the theoretical estimates \eqref{sys: error_estimate_BE} and \eqref{sys: error_estimate_CN}, which contain no inverse powers of $\mu$. The results therefore confirm that the accuracy of the proposed schemes does not deteriorate in the small-viscosity regime.

\subsubsection{Temporal accuracy and pressure initialization}
Using the same exact velocity and pressure fields as in Subsection~\ref{subsubsec: pressure_robust}, we further investigate the effects of the time-step size on the accuracy of the \texttt{FPR-BE} and \texttt{FPR-CN} schemes. Throughout this subsection, the viscosity is fixed at \(\mu=10^{-6}\), and we consider the two choices \(\tau=h\) and \(\tau=h^{1/2}\). These experiments compare the temporal behavior of the two schemes under the same spatial refinement and illustrate the higher temporal accuracy of the Crank-Nicolson discretization.

\begin{table}[!ht]
    \centering
    \begin{tabular}{|c|c||c|c|c|c|}
      \hline
      \multicolumn{6}{|c|}{\texttt{FPR-BE} scheme
        ($\mu = 10^{-6}$)}  \\
    \hline
      $h$ & $\tau$  & {\small $\norm{\mathcal{R}(\Pi_h\bu^N-\bu_h^N)}_0$}  & {$h$-order} & {\small $\norm{\Pz p-p_h}_{\ell^2(0,1;L^2)}$} & {$h$-order}  \\ 
      \hline
      $1/4$ & 1/4& 1.900e-2 & -  &5.339e-4 & - \\
      \hline
      $1/8$ & 1/8& 7.665e-3 & 1.31  &7.586e-5 & 2.82  \\
      \hline
      $1/16$ & 1/16& 3.141e-3 & 1.29  &9.300e-6 & 3.03\\
      \hline
      $1/32$ & 1/32& 1.391e-3 & 1.18  &1.043e-6 & 3.16 \\
      \hline
      $1/64$ & 1/64& 6.509e-4 & 1.10  &1.220e-7 & 3.10 \\
      \hline
      \hline
      \multicolumn{6}{|c|}{\texttt{FPR-CN} scheme
        ($\mu = 10^{-6}$)}  \\ 
    \hline
      $h$ & $\tau$  & {\small $\norm{\mathcal{R}(\Pi_h\bu^N-\bu_h^N)}_0$}  & {$h$-order} & {\small $\norm{\Pz p-p_h}_{\ell^2(0,1;L^2)}$} & {$h$-order}  \\  
      \hline
      $1/4$ & 1/4& 1.043e-2 & -  &3.118e-2 & - \\
      \hline
      $1/8$ & 1/8& 3.119e-3 & 1.74  &7.733e-3 & 2.01  \\
      \hline
      $1/16$ & 1/16& 8.193e-4 & 1.93  &1.936e-3 & 2.00\\
      \hline
      $1/32$ & 1/32& 2.091e-4 & 1.97  &4.848e-4 & 2.00 \\
      \hline
      $1/64$ & 1/64& 5.278e-5 & 1.99  &1.213e-4 & 2.00 \\
      \hline
    \end{tabular}
    \caption{Velocity and pressure errors and their observed convergence orders with respect to $h$ for the \texttt{FPR-BE} and \texttt{FPR-CN} schemes with $\tau=h$ and $\mu=10^{-6}$.}
    \label{table: tau-h_test}
\end{table}
We first set \(\tau=h\). Under this coupling, the theoretical estimates guarantee at least first-order convergence with respect to \(h\) for both schemes. Table~\ref{table: tau-h_test} reports the reconstructed velocity and pressure errors for the \texttt{FPR-BE} and \texttt{FPR-CN} schemes. The velocity error of the \texttt{FPR-BE} scheme approaches first-order convergence, whereas the \texttt{FPR-CN} velocity error converges at a rate close to two. Thus, although the available estimates guarantee only first-order convergence under the choice \(\tau=h\), the Crank-Nicolson scheme exhibits substantially higher velocity accuracy for this smooth test problem.
The pressure error of the \texttt{FPR-CN} scheme is larger in magnitude than that of the \texttt{FPR-BE} scheme on the same mesh, although it converges consistently at approximately second order.
This difference is related to the temporal treatment of the pressure in the \texttt{FPR-CN} formulation. In particular, the discrete pressure \(p_h\) approximates the pressure at the midpoint time level $t_{m+1/2}$ and is compared with \((p^{m+1}+p^m)/2\).

To obtain pressure approximations directly at the time levels \(t_m\), we additionally introduce a pressure-initialized variant of the fully pressure-robust Crank-Nicolson scheme, denoted by \texttt{FPR-CN-PI}. Given \((\mathbf u_h^m,p_h^m)\), the scheme determines \((\mathbf u_h^{m+1},p_h^{m+1})\) by using the previously computed pressure \(p_h^m\) in the Crank-Nicolson momentum equation. In this way, the pressure is advanced from \(t_m\) to \(t_{m+1}\), while the velocity discretization remains identical to that of the original \texttt{FPR-CN} scheme.

\begin{table}[!ht]
    \centering
    \begin{tabular}{|c|c||c|c|c|c|}
      \hline
         \multicolumn{6}{|c|}{\texttt{FPR-BE} scheme ($\mu = 10^{-6}$)
        }  \\
    \hline     
      $h$ & $\tau$  & {\small $\| \mathcal{R}(\Pi_h\bu^N-\bu^N_h)\|_0$}  & {$h$-order} & {\small $\norm{\Pz p-p_h}_{\ell^2(0,1;L^2)}$} & {$h$-order}  \\ 
      \hline
      $1/4$ & 1/2& 2.809e-2 & -  &6.165e-4 & - \\
      \hline
      $1/16$ & 1/4& 1.041e-2 & 0.72  &1.054e-5 & 2.94\\
      \hline
      $1/64$ & 1/8& 4.903e-3 & 0.54  &1.345e-7 & 3.15 \\
      \hline
      $1/256$ & 1/16& 2.433e-3 & 0.51  & 4.358e-9 & 2.47 \\
      \hline
      \hline
         \multicolumn{6}{|c|}{\texttt{FPR-CN} scheme ($\mu = 10^{-6}$)
        }  \\
    \hline 
      $h$ & $\tau$  & {\small $\| \mathcal{R}(\Pi_h\bu^N-\bu^N_h)\|_0$}  & {$h$-order} & {\small $\norm{\Pz \bar{p}-p_h}_{\ell^2(0,1;L^2)}$} & {$h$-order}  \\ 
      \hline
      $1/4$ & 1/2& 1.043e-2 & -  &3.141e-2 & - \\
      \hline
      $1/16$ & 1/4& 8.193e-4 & 1.84  &1.937e-3 & 2.01\\
      \hline
      $1/64$ & 1/8& 5.278e-5 & 1.98  &1.213e-4 & 2.00 \\
      \hline
      $1/256$ & 1/16& 3.320e-6 & 2.00  & 7.582e-6 & 2.00 \\
      \hline
      \hline
         \multicolumn{6}{|c|}{\texttt{FPR-CN-PI} scheme ($\mu = 10^{-6}$)
        }  \\
    \hline 
      $h$ & $\tau$  & {\small $\| \mathcal{R}(\Pi_h\bu^N-\bu^N_h)\|_0$}  & {$h$-order} & {\small $\norm{\Pz p-p_h}_{\ell^2(0,1;L^2)}$} & {$h$-order}  \\ 
      \hline
      $1/4$ & 1/2& 1.043e-2 & -  &6.165e-4 & - \\
      \hline
      $1/16$ & 1/4& 8.193e-4 & 1.84  &1.053e-5 & 2.94\\
      \hline
      $1/64$ & 1/8& 5.278e-5 & 1.98  &1.316e-7 & 3.16 \\
      \hline
      $1/256$ & 1/16& 3.320e-6 & 2.00  & 2.204e-9 & 2.95 \\
      \hline
    \end{tabular}
    \caption{Velocity and pressure errors and their observed convergence orders with respect to $h$ for the \texttt{FPR-BE}, \texttt{FPR-CN}, and \texttt{FPR-CN-PI} schemes with $\tau=h^{1/2}$ and $\mu=10^{-6}$.}
    \label{table: tau-sqrt(h)_test}
\end{table}
We next compare the \texttt{FPR-BE}, \texttt{FPR-CN}, and \texttt{FPR-CN-PI} schemes using the larger time step \(\tau=h^{1/2}\). For the \texttt{FPR-BE} scheme, substituting \(\tau=h^{1/2}\) into the backward Euler error estimate of Theorem~\ref{thm: euler_error_estimate} gives an expected velocity convergence rate of order \(h^{1/2}\), as discussed in Remark~\ref{remark: space-time_coupling}. The results in Table~\ref{table: tau-sqrt(h)_test} agree closely with this prediction, with the observed order approaching \(0.5\) under mesh refinement.
In contrast, both the \texttt{FPR-CN} and \texttt{FPR-CN-PI} schemes retain velocity convergence rates close to two despite the larger time step. 
Their velocity errors are identical up to the reported digits.
The pressure error of the \texttt{FPR-CN-PI} scheme is substantially smaller than that of the original \texttt{FPR-CN} scheme.
These results demonstrate that pressure initialization effectively improves the endpoint pressure approximation without compromising velocity accuracy.

\subsection{Three-dimensional results}

We next present two three-dimensional numerical experiments. The first verifies the mass-conservation property of the reconstructed velocity, while the second demonstrates the performance of the proposed method for flow around a three-dimensional obstacle.

\subsubsection{Verification of mass conservation}
We examine the mass-conservation properties of the \texttt{FPR-BE} and \texttt{FPR-CN} schemes in three dimensions. Let $\Omega = (0,1)^3$, $I = (0,1]$, and $\mu = 10^{-6}$. We consider the exact velocity and pressure
\[
\bu(x,y,z,t) = t^2\begin{pmatrix}
    \sin(\pi x)\cos(\pi y)-\sin(\pi x)\cos(\pi z)\\
    \sin(\pi y)\cos(\pi z)-\sin(\pi y)\cos(\pi x)\\
    \sin(\pi z)\cos(\pi x)-\sin(\pi z)\cos(\pi y)
\end{pmatrix},\qquad
p(x,y,z,t) =  \pi^3\sin(\pi x)\sin(\pi y)\sin(\pi z),
\]
where the velocity is divergence-free, i.e., $\nabla\cdot\bu=0$. 
However, at a given time level $t_m$, the original discrete velocity $\bu_h^m$ is generally not pointwise divergence-free, since the discrete velocity space does not enforce $\norm{\nabla\cdot\bu_h^m}_{0,\Th}=0$.
In contrast, as stated in Remark~\ref{remark: mass_conservation}, the
reconstructed velocity $\mathcal{R}\bu_h^m$ belongs to an
$H(\mathrm{div})$-conforming space and is pointwise divergence-free on
each element. Therefore,
\[
\norm{\nabla\cdot\mathcal{R}\bu_h^m}_{0,\Th}=0
\]
at the discrete level, while in numerical computations this quantity
is expected to remain at the level of machine precision.

Since the exact velocity contains the factor $t^2$, we report the normalized quantities
\[
\left\|\frac{\nabla\cdot\bu_h^m}{t_m^2}\right\|_{0,\Th}
\quad\text{and}
\quad
\left\|\frac{\nabla\cdot\cR\bu_h^m}{t_m^2}\right\|_{0,\Th}.
\]
This normalization removes the prescribed temporal scaling and permits a uniform comparison of the divergence behavior over time. Although such normalization is theoretically unnecessary for $\cR\bu_h^m$, its computed divergence is nonzero at the level of floating-point roundoff. Removing the $t_m^2$ factor prevents the temporal amplitude from affecting these machine-precision values.

\begin{figure}[!ht]
\centering
\includegraphics[width=0.45\linewidth]{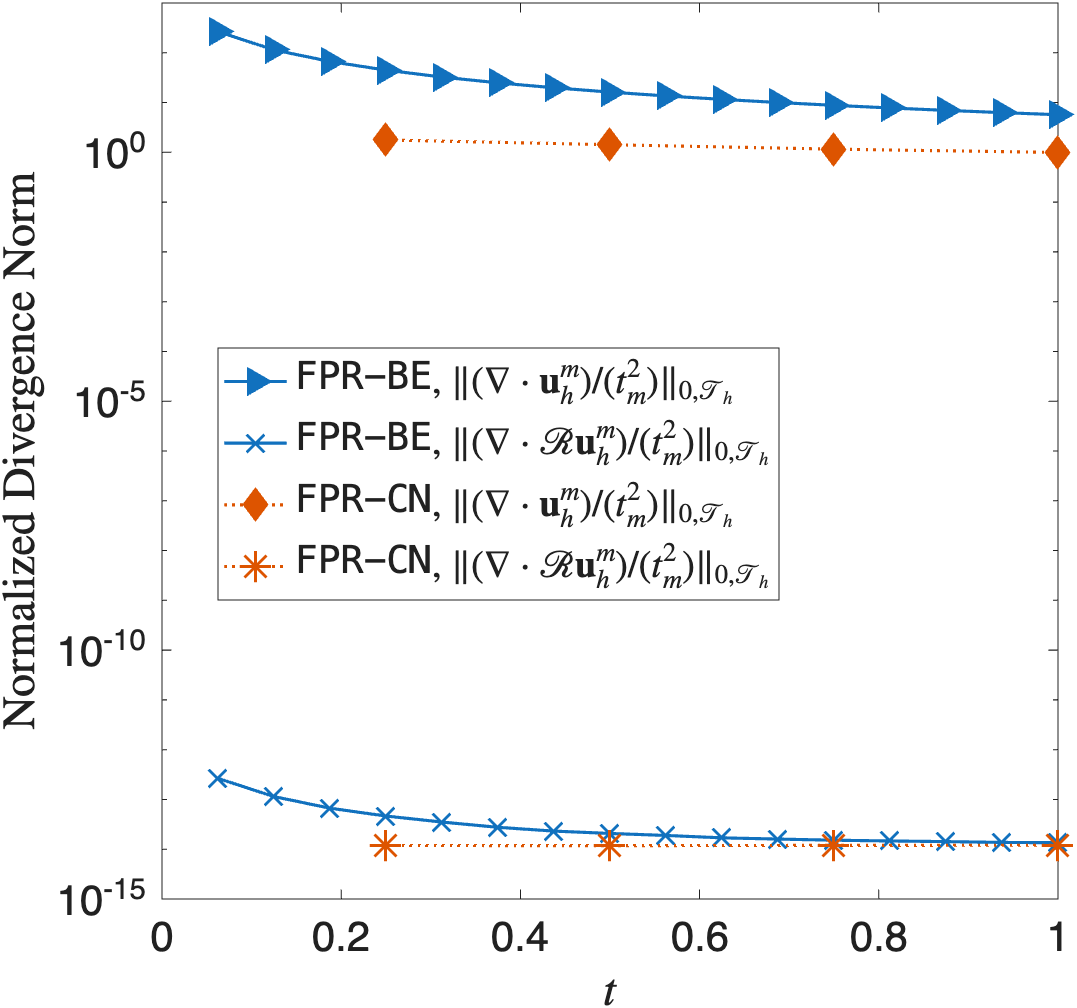}
    \caption{Normalized divergence norms of the original and reconstructed discrete velocities for the three-dimensional \texttt{FPR-BE} and \texttt{FPR-CN} schemes.}
    \label{figure: mass_conservation_3D}
\end{figure}
In Figure~\ref{figure: mass_conservation_3D}, the normalized divergence norm of $\cR\bu_h^m$ remains at approximately $10^{-14}$ for both schemes. This result confirms that $\cR\bu_h^m$ is divergence-free up to machine precision throughout the temporal evolution. The divergence norm of $\bu_h^m$ is nonzero but remains nearly constant in time, indicating that the divergence defect does not accumulate as the solution evolves.
A slight difference can be observed between the two time discretizations: the normalized divergence for the \texttt{FPR-CN} scheme is essentially constant, whereas that of the \texttt{FPR-BE} scheme decreases mildly over time. This behavior is consistent with the Crank-Nicolson method capturing the quadratic temporal factor $t^2$ exactly in this example, whereas backward Euler introduces a first-order temporal error.
Therefore, we conclude that discrete mass conservation is achieved up to machine precision through the reconstructed velocity $\cR\bu_h^m$ in both the \texttt{FPR-BE} and \texttt{FPR-CN} schemes. The behavior of $\bu_h^m$ also provides the supplementary observation that its divergence defect remains stable over time.

\subsubsection{Flow application}

We conclude the numerical experiments with a three-dimensional flow problem in a channel containing a rectangular obstacle. The computational domain is defined by
\[
\Omega = \left[\left(0,\frac32\right)\times(0,1)\times(0,1)\right]\setminus K,
\]
where
\[
K = \left[\frac{7}{16},\frac{9}{16}\right]\times\left[\frac{5}{16},\frac{11}{16}\right]\times\left[\frac{5}{16},\frac{11}{16}\right]
\]
denotes the obstacle. The channel is extended in the $x$-direction to provide sufficient downstream space for the flow to develop after passing the obstacle. The inflow, outflow, obstacle, and wall boundaries are denoted by
\[
\Gamma_{\mathrm{in}} = \{0\}\times (0,1)^2,\quad
\Gamma_{\mathrm{out}} = \left\{\frac{3}{2}\right\}\times(0,1)^2,\quad
\Gamma_{\mathrm{obs}} = \partial K, \quad\text{and}\quad \Gamma_{\mathrm{wall}} = \partial \Omega \setminus (\Gamma_{\mathrm{in}}\cup \Gamma_{\mathrm{out}}\cup \Gamma_{\mathrm{obs}}),
\]
respectively.
We impose the parabolic inflow condition
\[
\bu(0,y,z,t) = \begin{pmatrix}
    36y(1-y)z(1-z)\\
   0\\
    0
\end{pmatrix}\quad \text{on }\Gamma_\mathrm{in},
\]
together with the no-slip condition $\bu=\mathbf{0}$ on $\Gamma_\mathrm{wall}\cup\Gamma_\mathrm{obs}$.
At the outflow boundary, we impose the natural condition
\[
\mu\frac{\partial\bu}{\partial\bn}-p\bn=\mathbf{0}\quad \text{on }\Gamma_\mathrm{out},
\]
and the initial velocity is taken to be $\bu(\cdot,0)=\mathbf{0}$.
We use the \texttt{FPR-CN} scheme with viscosity $\mu=10^{-3}$, mesh size $h=1/16$, and time-step size $\tau = 1/4$.

\begin{figure}[!ht]
\centering
\begin{tabular}{ccc}
Velocity magnitude and streamline ($t=0.5$) && Pressure distribution ($t=0.5$)\\
    \includegraphics[width=.45\textwidth]{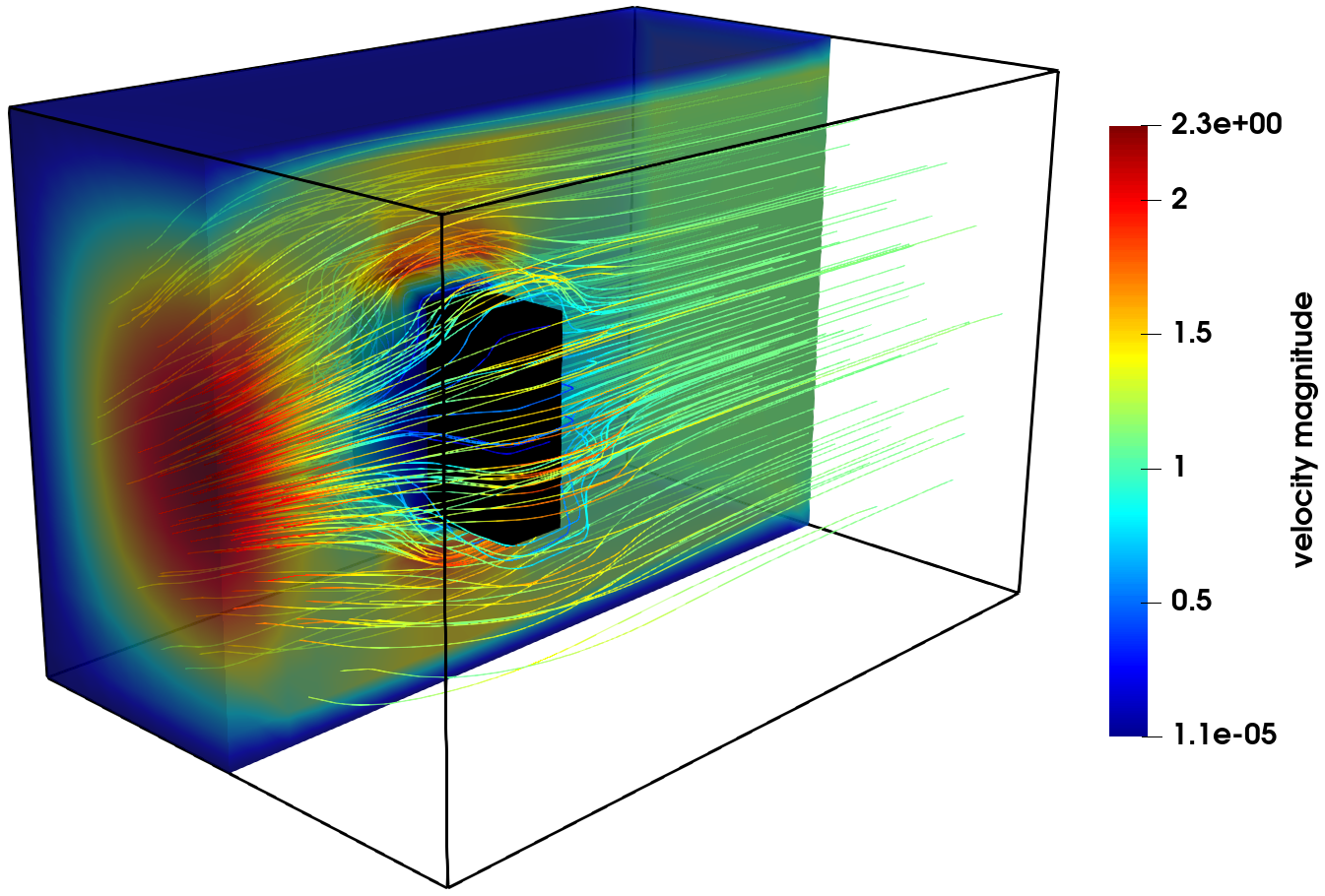}&
   &\includegraphics[width=.45\textwidth]{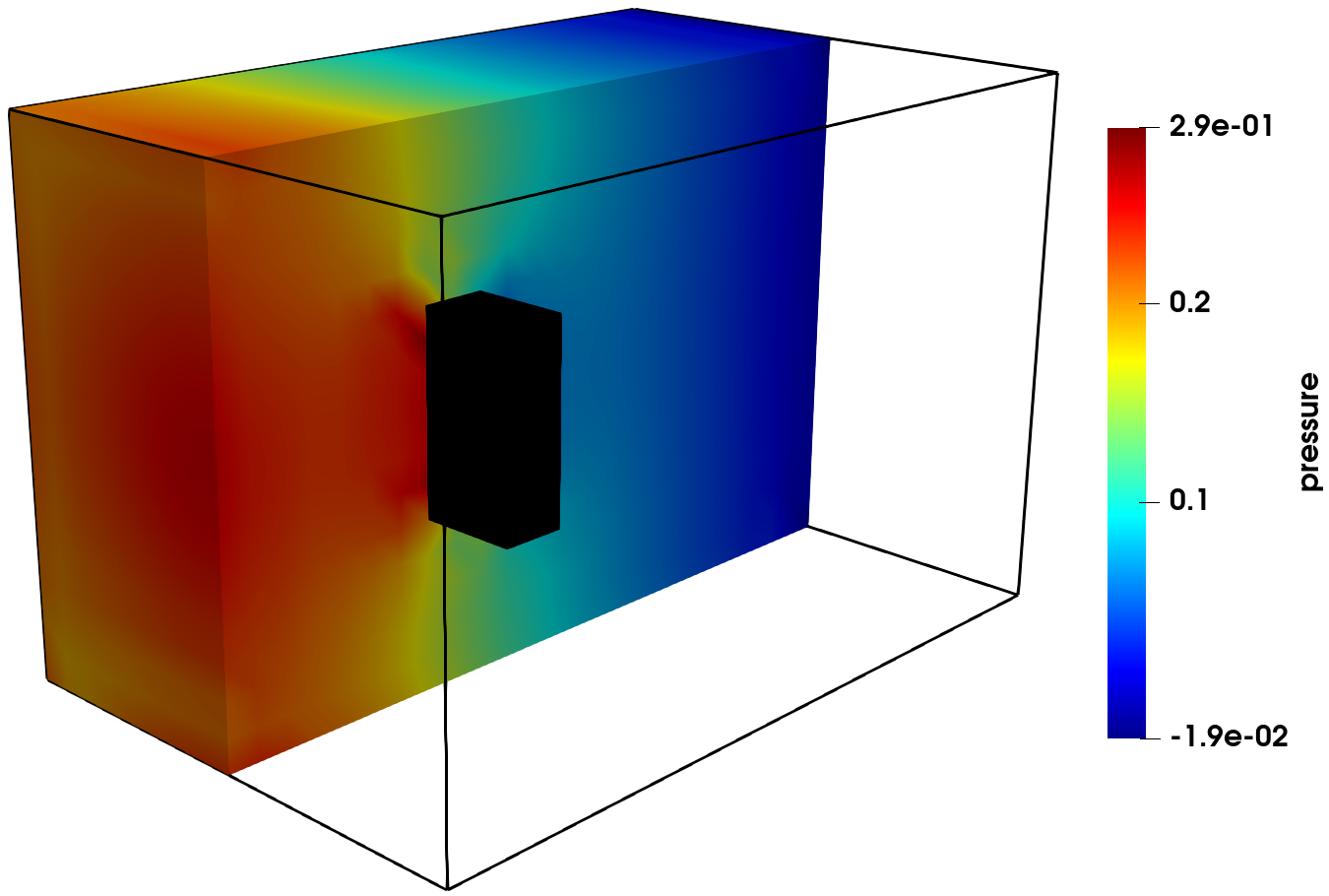}\\
   \\
   Velocity magnitude and streamline ($t=3$) && Pressure distribution ($t=3$)\\
   \includegraphics[width=.45\textwidth]{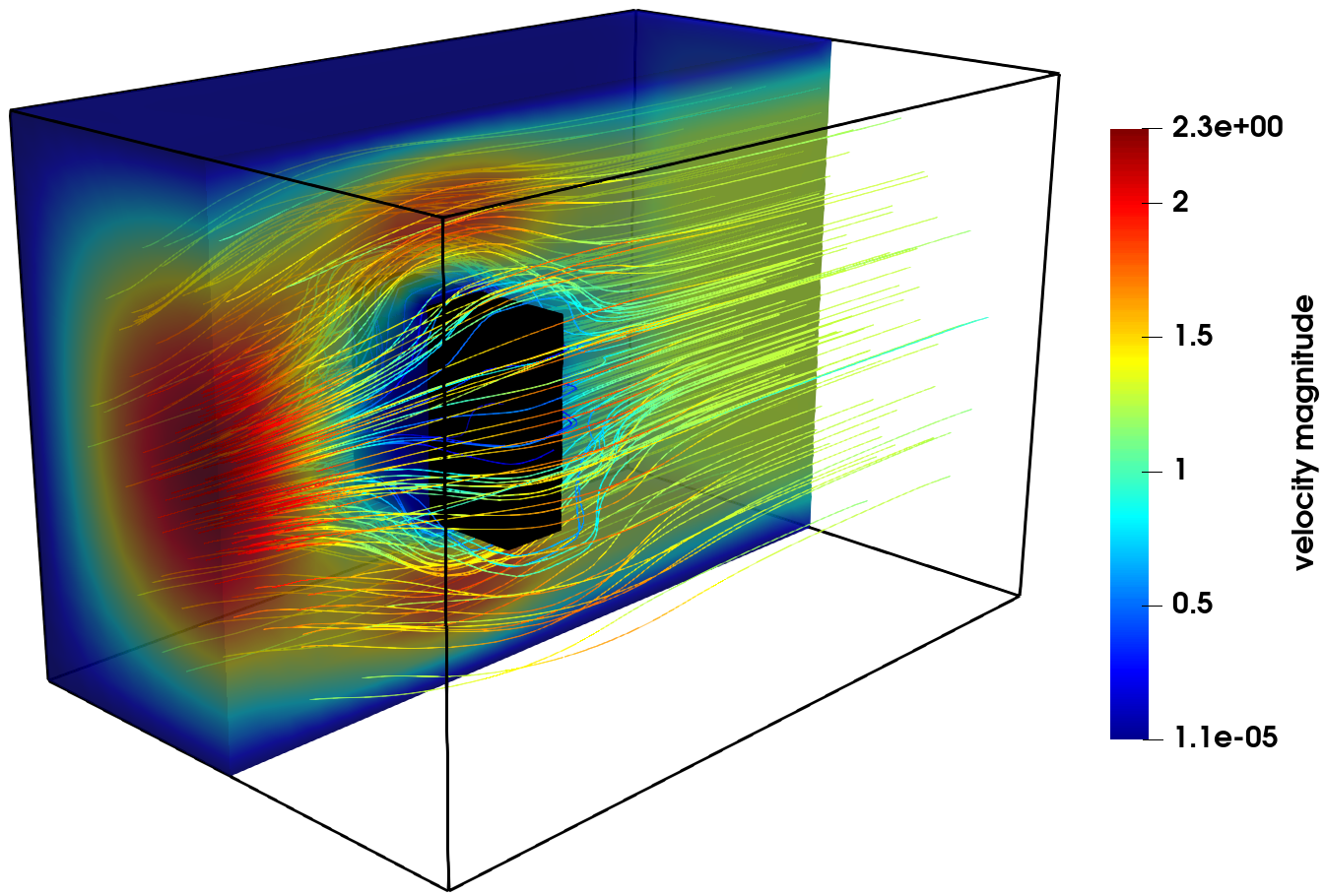}&
   &\includegraphics[width=.45\textwidth]{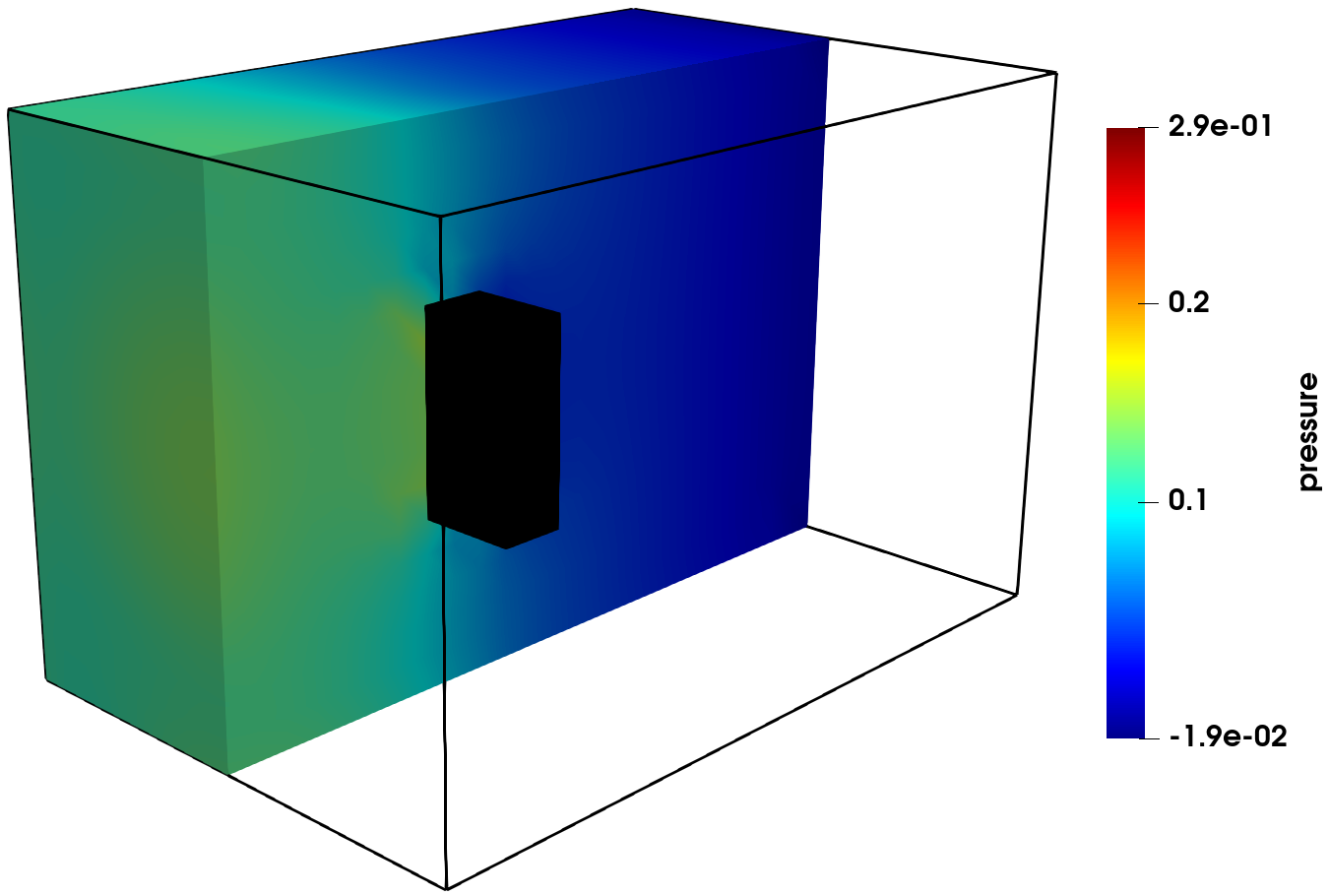}
\end{tabular}
    \caption{Velocity streamlines colored by velocity magnitude and pressure distributions for three-dimensional flow around a rectangular obstacle with $\mu=10^{-3}$ and $h=1/16$. The upper row corresponds to $t=0.5$, and the lower row corresponds to $t=3$.}
    \label{figure: flow_application}
\end{figure}
Figure~\ref{figure: flow_application} presents the velocity and pressure approximations at $t=0.5$ and $t=3$. The velocity streamlines show that the incoming flow separates, passes around the obstacle, and then recombines in the downstream region. The largest velocity magnitudes occur in the regions surrounding the obstacle, where the available flow area is reduced. At $t=3$, a broader high-velocity region is observed around the obstacle than at $t=0.5$, as the flow becomes more fully developed and accelerates through the reduced cross-sectional area surrounding the obstacle. Farther downstream, the streamlines become increasingly aligned with the channel direction, indicating flow recovery after passing the obstacle.
The pressure distribution exhibits the expected upstream-to-downstream decrease. A relatively high-pressure region develops on the upstream side of the obstacle, while a lower-pressure region appears behind it. This pressure difference drives the flow around the obstacle. Between $t=0.5$ and $t=3$, the overall pressure level changes, while the characteristic pressure drop across the obstacle remains clearly visible.
These results demonstrate that the proposed fully pressure-robust formulation can be applied to a three-dimensional flow problem with a nontrivial geometry, mixed boundary conditions, and a small viscosity parameter. The computed fields remain stable and capture the principal flow and pressure features around the obstacle.

\section{Conclusion}\label{sec:conclusion} 

This paper introduced and analyzed pressure-robust, fully discrete enriched Galerkin (EG) methods for the time-dependent Stokes equations.
The spatial discretization employs a tailored velocity reconstruction operator that preserves the continuous component of the EG space while mapping the enriched discontinuous component into the lowest-order Raviart-Thomas ($\mathcal{R}T_0$) subspace of $H(\mathrm{div})$, which ensures strict local mass conservation and pressure-robustness.
The reconstruction is incorporated into both the forcing and discrete time-derivative terms of the backward Euler and Crank-Nicolson schemes, which is essential for retaining pressure-robustness at the fully discrete level.
We established unconditional stability and derived optimal-order, parameter-explicit \textit{a priori} error estimates for the velocity and pressure. In particular, the velocity estimates are independent of the continuous pressure and the irrotational component of the forcing term and contain no inverse-viscosity factors. Numerical experiments in two and three dimensions verified the predicted convergence rates, strict local mass conservation, and robustness to the viscosity parameter. Comparisons with non-pressure-robust variants further demonstrated the importance of incorporating the reconstruction into the discrete time-derivative term.

Future work will extend the proposed framework to nonlinear incompressible flow problems, including the Navier-Stokes equations, with particular attention to preserving pressure-robustness and energy stability in convection-dominated regimes. Higher-order time-stepping methods, such as backward differentiation formulas, will also be investigated to improve temporal accuracy while maintaining pressure- and viscosity-robust fully discrete estimates.

\bibliography{tStokes}
	
\end{document}